\setlist[enumerate]{leftmargin=.5in}
\setlist[itemize]{leftmargin=.5in}
\crefname{assumption}{Assumption}{Assumptions}
\crefname{hypothesis}{Hypothesis}{Hypotheses}
\crefname{scenario}{Scenario}{Scenarios}
\crefname{section}{Section}{Sections}
\crefname{subsection}{Section}{Sections}
\title{Risk-Averse Markov Decision Processes through a Distributional Lens \thanks{
\funding{SJ  would like to acknowledge support from the Natural Sciences and Engineering Research Council of Canada (grants RGPIN-2018-05705 and RGPAS-2018-522715).}}}
\author{
Ziteng Cheng
\thanks{Department of Statistical Sciences, University of Toronto
  (\email{sebastian.jaimungal@utoronto.ca}, \url{http://sebastian.statistics.utoronto.ca}, \email{ziteng.cheng@utoronto.ca}).}
 \and
Sebastian Jaimungal
\footnotemark[2]
}
\tikzstyle{reward}=[shape=circle,draw=blue!50,fill=blue!10]
\tikzstyle{action}=[shape=circle,draw=green,fill=green!10]
\tikzstyle{state}=[shape=circle,draw=red!50,fill=red!10]
\tikzstyle{gru}=[shape=rectangle,draw=black!50,fill=lime!10]
\tikzstyle{obs}=[shape=circle,draw=blue!50,fill=blue!10]
\tikzstyle{lightedge}=[<-,dotted]
\tikzstyle{mainstate}=[state,thick]
\tikzstyle{mainedge}=[<-,thick]
\newcounter{example}[section]
\newcommand{\wt}{\widetilde}
\newcommand{\1}{\mathbbm{1}}            
\newcommand{\set}[1]{\{#1\}}            
\DeclareMathOperator{\dif}{d \!}        
\DeclareMathOperator*{\argmin}{arg\,min} 
\DeclareMathOperator\cvar{CV@R}
\DeclareMathOperator\avar{AV@R}
\DeclareMathOperator\supp{support}
\def\cA{\mathcal{A}}
\def\cB{\mathcal{B}}
\def\cE{\mathcal{E}}
\def\cL{\mathcal{L}}
\def\cM{\mathcal{M}}
\def\cN{\mathcal{N}}
\def\cP{\mathcal{P}}
\def\bA{\mathbb{A}}
\def\bD{\mathbb{D}}
\def\bE{\mathbb{E}}
\def\bF{\mathbb{F}}
\def\bG{\mathbb{G}}
\def\bM{\mathbb{M}}
\def\bN{\mathbb{N}}
\def\bP{\mathbb{P}}
\def\bR{\mathbb{R}}
\def\bS{\mathbb{S}}
\def\bU{\mathbb{U}}
\def\bV{\mathbb{V}}
\def\bX{\mathbb{X}}
\def\bY{\mathbb{Y}}
\def\bZ{\mathbb{Z}}
\def\sA{\mathscr{A}}
\def\sD{\mathscr{D}}
\def\sF{\mathscr{F}}
\def\sG{\mathscr{G}}
\def\sH{\mathscr{H}}
\def\sU{\mathscr{U}}
\def\sX{\mathscr{X}}
\def\sY{\mathscr{Y}}
\def\fA{\mathfrak{A}}
\def\fC{\mathfrak{C}}
\def\fH{\mathfrak{H}}
\def\fX{\mathfrak{X}}
\def\fZ{\mathfrak{Z}}
\def\fp{\mathfrak{p}}
\def\fv{\mathfrak{v}}
\def\kcol{\breve{k}}
\def\kcaution{\hat{k}}
\begin{document}

\maketitle

\begin{abstract}

By adopting a distributional viewpoint on law-invariant convex risk measures, we construct dynamics risk measures (DRMs) at the distributional level. We then apply these DRMs to investigate Markov decision processes, incorporating latent costs, random actions, and weakly continuous transition kernels. Furthermore, the proposed DRMs allow risk aversion to change dynamically. Under mild assumptions, we derive a dynamic programming principle and show the existence of an optimal policy in both finite and infinite time horizons. Moreover, we provide a sufficient condition for the optimality of deterministic actions. For illustration, we conclude the paper with examples from optimal liquidation with limit order books and autonomous driving.
\end{abstract}

\begin{keywords} Dynamic programming, Markov decision processes, Risk measures, Risk averse
\end{keywords}

\begin{AMS}
90C39, 91G70
\end{AMS}

\section{Introduction}

Markov Decision Processes (MDPs) may be viewed as discrete-time stochastic control problems for sequential decision making in situations where costs are partly random and partly under the control of a decision maker. Classical MDP theory is concerned with minimizing the expected discounted total cost and, in many cases, the minimization problem is solved by establishing a Dynamic Programming Principle (DPP). Results in the vast area of MDPs may be founded in several textbooks, e.g., \cite{Bertsekas1996book,HernandezLerma1996book,Bauerle2011book}. The classical expected performance criteria is, however, limited in its application and, in many cases, it is prudent to incorporate risk assessment into decision making. 

One popular criterion is based on convex risk measures \cite{Artzner1999Coherent,Delbaen2002Coherent, Folmer2002Convex}. A na\"ive combination of convex risk measures and discounted total costs, however, lacks time consistency, hindering the derivation of a corresponding DPP. Roughly speaking, time consistency refers to the property that smaller scores in future epochs guarantee a smaller score in the current epoch. We refer to \cite{Bielecki2017Survey} for a survey on various definitions of time consistency. There is a stream of literature (see, e.g., \cite{Detlefsen2005Conditional, Gianin2006Risk, Riedel2007Dynamic, Ruszczynski2010Risk, Filipovic2012Approaches, Pflug2016Time, Chow2015Risk, Bauerle2021Minimizing}) that studies time consistency from multiple angles and/or attempts to integrate convex risk measures and their variations into MDPs. While here we are not concerned with model uncertainty, we would like to point out \cite{Bielecki2021Risk} and the references therein for a framework that handles model uncertainty in MDPs.

Here, we focus on the risk-averse MDPs framework proposed in \cite{Ruszczynski2010Risk}. While a plethora of studies exist on conditional risk measures (CRMs) and dynamic risk measures (DRMs), the problem of risk-averse MDPs and their corresponding DPP cannot be straightforwardly inferred from the established properties of these risk measures. Broadly speaking, DPPs transform a sequential optimization problem into the task of solving an operator Bellman equation. In the context of risk-averse MDPs, this demands separate research, as in general, a readily solvable DPP in terms of operator equations requires additional conditions beyond properties of CRMs and DRMs. This is particularly true when considering the attainability of optimal actions. Below, we offer a concise review of the established DPPs that align with the general framework initiated by \cite{Ruszczynski2010Risk}. \cite{Ruszczynski2010Risk} considers deterministic costs, introduces the notion of risk transition mappings, and uses them to construct, in a recursive manner,  a class of (discounted) DRMs. The author proceeds to derive both finite and infinite (with bounded costs) time horizon DPPs for such DRMs. We also refer to \cite{Ruszczynski2014Erratum} for the assumptions needed. \cite{Shen2014Risk} extends the infinite horizon DPP to unbounded costs as well as for average DRMs. The risk transition mappings involved are assumed to exhibit an analogue of a strong Feller property. \cite{Chu2014Markov} studies a similar infinite horizon DPP with unbounded costs under a different set of assumptions. Recently, \cite{Bauerle2021Markov} considers unbounded latent costs and establishes the corresponding finite and infinite horizon DPPs. They also prove sufficiency of Markovian actions against history dependent actions. They construct DRMs, for finite time horizon problems, from iterations of static risk measures that are Fatou and law invariant. The infinite horizon problems require in addition the coherency property. They also require the underlying MDP to exhibit a certain path-wise continuous/semi-continuous transition mechanism.  \cite{coache2023reinforcement,coache2023conditionally} develops a computational approach for optimization with dynamic convex risk measures using deep learning techniques.  Finally, it is noteworthy that the concept of risk form is introduced in \cite{Dentcheva2020Risk} and is applied to handle two-stage MDP with partial information and decision-dependent observation distribution. 

The main goal of this paper is to study finite and infinite horizon risk averse MDPs in a similar framework as above, but with latent costs and randomized actions, under a weakly continuous transition mechanism, subject to state-dependent DRMs. Below we provide a high level discussion on some of the open problems we address in this paper. We would like to note that in some of the earlier risk-averse MDP frameworks, latent costs were not considered. We believe that in many circumstances, latent costs can be used to account for the risk related to factors such as time-discretization and processing delays. To the best of our knowledge, \cite{Bauerle2021Markov} is the first to consider latent costs within risk-averse MDP frameworks. Their DRMs, however, are constructed from compositions of static risk measures. A framework that allows for state-dependent risk measures allows for greater flexibility in adjusting the level of risk aversion depending on where one is in state space (e.g., in the context of portfolio allocation, an investor may become more risk averse as their wealth increase). Apart from the points mentioned above, it is worth questioning whether it is possible to modify the existing risk-averse MDP framework to account for the randomness in randomized actions in a risk-averse manner. We note that randomized actions are allowed in \cite{Chu2014Markov}, but are evaluated in an arguably risk-neutral manner. Lastly, an adequate discussion on weakly continuous transition kernels is missing from the existing literature on risk averse MDPs. Weakly continuous transition kernels are often preferred in practice, as they provide the flexibility to consider transition dynamics that are partly random and partly deterministic. Moreover, they facilitate data-based modeling by removing the need of working with the density of the kernel or the underlying probability spaces, which typically requires additional model assumptions. 

Our approach to formulating risk-averse MDPs is grounded in the understanding that law-invariant convex risk measures can be interpreted as functionals defined on the space of probabilities over $\bR$. This perspective has been effectively employed in various contexts to further the development of risk measure theory (cf. \cite{Weber2006Distribution, Acciaio2013Are, Delage2019Dicesion, Fadina2023Framework}). Notably, \cite{Weber2006Distribution} is a seminal contribution that systematically investigates static risk measures and DRMs from a distributional standpoint. In a similar vein, we explore DRMs at the level of distributions, conceptualizing them as nested compositions of state-dependent law-invariant convex risk measures.  It is important to emphasize that previous studies on DRMs at the distributional level, such as in \cite{Weber2006Distribution, Bauerle2021Markov}, primarily focused on static one-step risk measures in characterization or construction. Our approach, which allows the risk measures to vary according to the state, introduces additional complexity. A significant advantage of this distributional-level construction is that it automatically ensures MDPs with identical distributions are treated as equivalent in terms of risk when assessed under the proposed DRMs. Moreover, it seamlessly integrates latent costs and random actions through the concept of regular conditional distributions. Furthermore, our framework provides an appropriate foundation for balancing various assumptions, including a weakly continuous transition kernels, while still ensuring the attainment of the optimal outcome. This, in turn, allows for greater flexibility in risk-averse modeling. It's important to note that, although the construction above may seem like a straightforward alteration of existing frameworks, it involves some unique technical aspects that have not been previously discussed. 
For simplicity, we consider bounded costs, which allows for conditional risk mappings that contain essential supremum as a major ingredient -- a feature that is often omitted otherwise.

The main contributions of this paper can be summarized as follows:
\begin{enumerate}
\item We construct DRM at a distributional level, by compounding state-dependent law invariant convex risk measures in both finite and infinite horizon. The infinite horizon definition requires that the law-invariant convex risk measures involved are normalized. The regularities underpinning of these constructions are examined in detail in \cref{subsec:DDRM}.

\item Under mild assumptions, we derive both finite and infinite horizon DPPs, for MDPs with latent costs and randomized actions as well as weakly continuous transition kernel, subject to the aforementioned DRMs. We prove the existence of an optimal policy that is Markovian. Furthermore, we demonstrate that, in both finite and infinite horizons, the optimal Markovian policy is no worse than any other history-dependent policies.  We refer to \cref{thm:FiniteDPP}, \cref{prop:MarkovControlOptFinite} and \cref{thm:InfiniteDPP}, \cref{prop:MarkovControlOpt} for detailed statements.

\item In \cref{prop:wpSingleton}, we provide a sufficient condition for the optimality of deterministic actions. This condition could be useful in scenarios where the agent prefers to avoid the complexity and unpredictability associated with randomized actions. 
\end{enumerate}

To demonstrate the practical application of the proposed framework of risk-averse MDPs, we also present two examples in \cref{sec:Examples} on optimal liquidation with limit order books and autonomous driving. The optimal liquidation example, posed within a finite horizon context with discrete state and action spaces, underscores the potential of the framework to incorporate latent costs and adjust risk aversion in response to the evolving state. The autonomous driving example, posed within an infinite horizon context, showcases the framework's capability to handle a large and complex state space, a requirement often necessary for constructing more realistic models. It's worth noting that the autonomous driving example naturally necessitates certain degenerate transition mechanisms. While these are not compatible with strongly continuous transition kernels in general, they can be easily incorporated into a weakly continuous transition kernel as we do here.

The remainder of the paper is structured as follows. In \cref{sec:Setup}, we first introduce our notation, then recall definitions and basic properties of various important concepts, and establish preliminary results, such as the construction of DRMs at the level of distributions. Formulations for risk-averse MDP are organized in the end of the section.  \cref{sec:Aux} is devoted to auxiliary results. We introduce some useful operators related to Markovian policy and investigate their regularities. In \cref{sec:DPPFinite}, we derive the finite horizon DPP for Markovian actions and argue that Markovian actions can achieve the optimal. The analogous results for infinite horizon are presented in \cref{sec:DPPInfinite}. In \cref{sec:SuffDeter}, we establish a sufficient condition on the optimality of deterministic actions. \cref{sec:Examples} contains a variety of examples that serve to illustrate the concepts proposed in this study. We also provide an example illustrating the necessity of randomized actions and some technical lemmas in \cref{app:Example} and \cref{app:Lemmas}, respectively. Finally, for reference, \cref{app:notations} contains a glossary of notation.

\section{Setup and preliminaries}\label{sec:Setup}

To formulate our problem, we first specify the spaces related to the underlying process, action process, and probability, among others. We use the following notations for various spaces throughout the paper. We also provide in Appendix \ref{app:notations} a glossary of notations used throughout the paper, but introduce them as they arise.

\begin{itemize}

\item[-] We write $\bN:=\set{1,2,...}$ and $\bN_0:=\set{0}\cup\bN$. We let $\bR$ denote the real line. We endow $\bR$ with Borel $\sigma$-algebra $\cB(\bR)$. We let $\overline\bR:=\bR\cup\set{+\infty}\cup\set{-\infty}$ and $\cB(\overline\bR):=\sigma(\cB(\bR)\cup\set{\set{+\infty},\set{-\infty}})$.

\item[-] For any measurable space $(\bY,\sY)$, we write $\ell^\infty(\bY,\sY)$ for the set of bounded real-valued $\sY$-$\cB(\bR)$ measurable functions. We let $\ell^\infty(\bN;\bY,\cB(\bY))$
denote the set of $\fv = (v_t)_{t\in\bN}\subseteq \ell^\infty(\bY,\cB(\bY))$, and equip it with norm $\|\fv\|_\infty:=\sup_{t\in\bN, y\in\bY}|v_t(y)|$, which makes\\ $\ell^\infty(\bN;\bY,\cB(\bY))$ complete. For any $y\in\bY$, the Dirac probability measure at $y$, denoted by $\delta_y$, is defined as $\delta_y(A) := \1_{A}(y)$ for $A\in\sY$.

\item[-] Let $(\Omega, \sH, \bP)$ be a complete probability space. We suppose $(\Omega, \sH, \bP)$ does not have atoms. We write $L^\infty(\Omega,\sH,\bP)$ for the set of real-valued $\sH$-$\cB(\bR)$ random variables that are $\bP$-almost surely bounded. For $Z\in L^\infty(\Omega,\sH,\bP)$, we define $\|Z\|_\infty:=\inf\set{r\in\bR:\bP(|Z|>r)=0}$.

\item[-] We let $\bF:=(\sF_t)_{t\in\bN}$ and $\bG:=(\sG_t)_{t\in\bN}$ be filtrations of $\sH$ such that $\sF_t\subseteq\sG_t$ for all $t\in\bN$ and $\sF_1$ contains all $\bP$-negligible sets. We set $\sF_0:=\sG_0:=\set{\emptyset,\Omega}$. Later in Section \ref{subsec:controlledP}, we associate $\bF$ with the state process and $\bG$ with the state-action process. We also define $\sU_t:=\sG_{t-1}\vee\sF_t$ for $t\in\bN$. It follows that $\bU:=(\sU_t)_{t\in\bN}$ is also a filtration. We also set $\sU_0:=\set{\emptyset,\Omega}$. This $\bU$ is used to represent the information available at times of decision making.

\item[-]  Let $\bX$ be a complete separable metric space equipped with Borel $\sigma$-algebra $\cB(\bX)$, which models the state space. Let $\Xi$ be the set of probability measures on $\cB(\bX)$. We endow $\Xi$ with the weak topology, which is the coarsest topology on $\Xi$ containing sets $\left\{\xi\in\Xi: \int_{\bX}f(a)\xi(\dif x)\in U\right\}$ with $f\in C_b(\bX)$ and $U\subseteq\bR$ open. The corresponding Borel $\sigma$-algebra is denoted by $\cB(\Xi)$. The evaluation $\sigma$-algebra on $\Xi$, denoted by $\cE(\Xi)$, 
is the $\sigma$-algebra containing sets of the form $\set{\xi\in\Xi:\xi(A)\in B}$ with $A\in\cB(\bX)$ and $B\in\cB([0,1])$. In other words, $\cE(\Xi)$ is the smallest $\sigma$-algebra on $\Xi$ such that the mapping $A\mapsto\xi(A)$ is $\cE(\Xi)$-$\cB([0,1])$ measurable for any $A\in\cB(\bX)$. Equivalently,\footnote{One direction is obvious because $\xi(A)=\int_{\bX}\1_A(x)\mu(\dif x)$ for $A\in\cB(\bX)$. The other direction follows from an application of monotone class theorem for functions (cf. \cite[Theorem 5.2.2]{Durrett2019book}) on $\big\{f\in\ell^\infty(\bX,\cB(\bX): \xi\mapsto\int_{\bX} f(x)\xi(\dif x) \text{ is } \cE(\Xi)\text{-}\cB(\bR) \text{ measurable } \big\}$, where $\cE(\Xi)$ is defined in the initial way, along with the fact that pointwise convergence preserves measurability (cf. \cite[Section 4.6, Lemma 4.29]{Aliprantis2006book}).}  $\cE(\Xi)$ can be defined as the $\sigma$-algebra containing sets $\set{\xi\in\Xi:\int_{\bX}f(x)\xi(\dif x) \in B}\,$ for all $f\in\ell^\infty(\bX,\cB(\bX))$ and $B\in\cB(\bR)$. 
In view of Lemma \ref{lem:sigmaAlgBE}, we have $\cB(\Xi)=\cE(\Xi)$. 

\item[-]   Let $\bA$ be another complete separable metric space equipped with Borel $\sigma$-algebra $\cB(\bA)$, which models the action space. Let $\Lambda$ be the set of probability measures on $\cB(\bA)$. We endow $\Lambda$ with the weak topology and the corresponding Borel $\sigma$-algebra $\cB(\Lambda)$. The evaluation $\sigma$-algebra on $\Lambda$ is denoted by $\cE(\Lambda)$. By Lemma \ref{lem:sigmaAlgBE} again, we have $\cB(\Lambda)=\cE(\Lambda)$.

\item[-] We consider policies subject to state dependent admissible domains of actions. For $t\in\bN$, we let $\cA_t:(\bX,\cB(\bX))\to 2^{\bA}$ be nonempty closed valued and weakly measurable, i.e., $\set{x\in\bX:\cA_t(x)\cap U\neq\emptyset}\in\cB(\bX)$ for any open $U\subseteq\bA$. For each $t\in\bN$ and $x\in\bX$, we let $\varpi_t(x)$ be the set of probability measures on $\cB(\bA)$ such that $\pi(\cA_t(x))=1$ for any $\pi\in\varpi_t(x)$, and let $\Pi_t$ consist of $\pi_t:(\bX,\cB(\bX))\to(\Lambda,\cE(\Lambda))$ such that $\pi_t(x)\in\varpi_t(x)$ for $x\in\bX$.\footnote{$\Pi_t$ is not empty. To see this, note by Kuratowski and Ryll-Nardzewski measurable selection theorem (cf. \cite[Theorem 18.13]{Aliprantis2006book}), there exists $\alpha:(\bX,\cB(\bX))\to(\bA,\cB(\bA))$ such that $\alpha(x)\in\cA_t(x)$ for $x\in\bX$. It follows that if $\pi(x):=\delta_{\alpha(x)}$, then $\pi\in\Pi_t$.} $\Pi$ is the set of $\fp:=(\pi_t)_{t\in\bN}$ such that $\pi_t\in\Pi_t$ for all $t\in\bN$.

\item[-] For a closed $F\subseteq\bR$, we let $\cP(F)$ the set of probability measures on $(F,\cB(F))$, endowed with weak topology. The corresponding Borel $\sigma$-algebra is denoted by $\cB(\cP(F))$. Since $\cP(F)$ is separable and metrizable (cf. \cite[Section 15.3, Theorem 15.15]{Aliprantis2006book}), invoking Lemma \ref{lem:sigmaAlgBE} again, $\cB(\cP(F))$ coincides with the evaluation $\sigma$-algebra $\cE(\cP(F))$. When $F\subset\bR$, we may treat $\mu\in\cP(F)$ as an element of $\cP(\bR)$ with $\supp \mu\subseteq F$. We let $\cP_b(\bR)$ consists of probability measures on $(\bR,\cB(\bR))$ with bounded support, i.e., $\cP_b(\bR)=\bigcup_{n\in\bN}\cP([-n,n])$. We also define $\cP_b(\bR^2)$ in a similar manner.

\end{itemize}

\subsection{Regular Conditional Distribution}\label{subsec:RegCondDist}

In this section, we recall the definition of regular conditional distributions, as it plays a crucial role in many aspects of the paper.

Consider $Y:(\Omega,\sH)\to(\bY,\sY)$ and  $\sG\subseteq\sH$. The conditional distribution of $Y$ given $\sG$, defined as $\set{\bP(Y\in B|\sG):=\bE(\1_B(Y)|\sG)}_{B\in\sY}$, can be viewed as a function of $\omega\in\Omega$ and $B\in\sY$. For each $B\in\sY$, however, $\bP(Y\in B|\sG)$ is defined only almost surely, which hinders us from using $\bP(Y\in \,\cdot\,|\sG)$ as a probability measure depending on $\omega\in\Omega$ (countable additivity may not hold). To resolve such issues, we recall the notion of a regular conditional distribution.
\begin{definition}\label{def:RegCondDist}
$P^{Y|\sG}:\Omega\times\sY\to[0,1]$ is a regular version of $\bP(Y\in\,\cdot\,|\sG)$ if 
\begin{itemize}
\item[(i)] for each $A\in\sY$, $\omega\mapsto P^{Y|\sG}(\omega,A)$ is $\sG$-$\cB(\bR)$ measurable;
\item[(ii)] for each $\omega\in\Omega$, $P^{Y|\sG}(\omega,\,\cdot\,)$ is a probability measure on $\sY$;
\item[(iii)] for each $A\in\sY$, $P^{Y|\sG}(\omega, A) = \bP(Y\in A|\sG)(\omega)$ for $\bP$-a.e. $\omega\in\Omega$.	
\end{itemize}
\end{definition}
If $\sG$ is the $\sigma$-algebra generated by a random variable, say $X$, we  write $P^{Y|X}$ instead of $P^{Y|\sigma(X)}$.

Let the set of probability measures on $\sY$ be denoted by $\cP$ and endowed with $\sigma$-algebra $\cE(\cP)$. Because for $A\in\sY,\,B\in\cB(\bR)$,
\begin{align*}
\set{\omega\in\Omega:P^{Y|\sG}(\omega,\,\cdot\,)\in\set{\zeta\in\cP:\eta(A) \in B} } = \set{\omega\in\Omega:P^{Y|\sG}(\omega,A)\in B} \in \sG,
\end{align*}
by \cite[Section 4.5, Corollary 4.24]{Aliprantis2006book}, $\omega\mapsto P^{Y|\sG}(\omega,\,\cdot\,)$ is a measure-valued $\sG$-$\cE(\cP)$ random variable.

By \cite[Chapter I Section 3, Theorem 3]{Gikhman1974book} (see also \cite[Theorem 10.4.8 and Example 10.4.9]{Bogachev2007book}), if $\bY$ is a complete separable metric space and $\sY=\cB(\bY)$ is the corresponding Borel $\sigma$-algebra, then $\bP(Y\in\,\cdot\,|\sG)$ always has a regular version. Moreover, $P^{Y|\sG}(\omega,\,\cdot\,)$ as a probability measure is unique up to a $\bP$-negligible set of $\omega\in\Omega$ (cf. \cite[Lemma 10.4.3]{Bogachev2007book}). In view of Lemma \ref{lem:sigmaAlgBE}, $P^{Y|\sG}$ is also $\sG$-$\cB(\cP)$ measurable.

For a nonnegative $\sY$-$\cB(\bR)$ measurable $f$, for each $\omega\in\Omega$, we consider
the Lebesgue integral $\int_{\sY}f(y)P^{Y|\sG}(\omega,\dif y)$. When no confusion arises, we omit $\omega$ and write $P^{Y|\sG}(B)$ and $\int_{\sY}f(y)P^{Y|\sG}(\dif y)$ instead. Clearly, for any $A\in\sY$,
\begin{align}\label{eq:RegCondProbInt}
\int_{\sY}\1_A(y)P^{Y|\sG}(\dif y) = P^{Y|\sG}(A) = \bP(Y\in A|\sG),\quad\bP-a.s.\;.
\end{align}

\subsection{Controlled process $(\fX,\fA)$}\label{subsec:controlledP}
We let $\fX:=(X_t)_{t\in\bN}$ be an $\bX$-valued $\bF$-adapted process, i.e., $X_t$ is $\sF_t$-$\cB(\bX)$ measurable for all $t\in\bN$. Clearly, $\fX$ is a $\bG$-adapted process as well. We also let $\fA:=(A_t)_{t\in\bN}$ be an $\bA$-valued $\bG$-adapted process. $\fX$ and $\fA$ represent the underlying state process and the action process, respectively. Heuristically, letting $\fA$ be $\bG$-adapted allows us to have randomized actions with the property $A_t\sim\pi_t(X_t)$, where $\pi_t:\bX\to\Lambda$. Below we recall the concepts of a transition kernel from $(X_t,A_t)$ to $X_{t+1}$ and Markovian action.

Suppose that for $t\in\bN$, we have
\begin{align}\label{eq:GMarkov}
\bP(X_{t+1}\in B\,|\,\sG_t) = \bP(X_{t+1}\in B\,\big|\,\sigma(X_t)\vee\sigma(A_t)),\quad B\in\cB(\bX).
\end{align}
Let $P^{X_{t+1}|(X_t,A_t)}$
be the corresponding regular conditional distribution. By Definition \ref{def:RegCondDist} (i), for each $B\in \cB(\bX)$, $\omega\mapsto P^{X_{t+1}|(X_t,A_t)}(\omega,B)$ is $\sigma(X_t)\vee\sigma(A_t)$-$\cB(\bR)$ measurable. It follows from \cite[Section 4.8, Theorem 4.11]{Aliprantis2006book} that there is a mapping $h^t_B:(\bX\times\bA,\cB(\bX)\otimes\cB(\bA))\to(\bR,\cB(\bR))$ such that $h^t_B(X_t(\omega),A_t(\omega)) = P^{X_{t+1}|(X_t,A_t)}(\omega,B)$ for all $\omega\in\Omega$. Then by Definition \ref{def:RegCondDist} (ii), for $(x,a)\in\bX\times\bA$ such that $(X_t(\omega),A_t(\omega))=(x,a)$ for some $\omega\in\Omega$, we have $h^t_\cdot(x,a)$
is a probability measure on $\cB(\bX)$. For $(x,a)$ that does not belong to the pointwise range of $(X_t,A_t)$, we may set $h_B(x,a)=\delta_{x_0}(B)$ for some $x_0\in\bX$ so that $h_\cdot(x,a)$ is a probability measure on $\cB(\bX)$ and $h_B$ is still measurable. By writing $P(t,x,a,B)=h^t_B(x,a)$, we have
\begin{align}\label{eq:transkernel}
\bP(X_{t+1}\in B\,\big|\,\sigma(X_t)\vee\sigma(A_t)) = P(t,X_t, A_t, B) ,\quad\bP-a.s.,\,t\in\bN,\,B\in \cB(\bX).
\end{align}
Note that for any $A\in\cB(\bX)$ and $B\in\cB(\bR)$ we have
\begin{align*}
&\left\{(x,a)\in\bX\times\bA:P(t,x,a,\,\cdot\,)\in\left\{\xi\in\Xi:\xi(A)\in B\right\} \right\} \\
&\quad = \left\{(x,a)\in\bX\times\bA:P(t,x,a,A) \in B \right\} = \left\{(x,a)\in\bX\times\bA:h^t_A(x,a) \in B \right\} \in \cB(\bX)\otimes\cB(\bA),
\end{align*}
thus by \cite[Section 4.5, Corollary 4.24]{Aliprantis2006book}, $(x,a)\mapsto P(t,x,a,\cdot)$ is $\cB(\bX)\otimes\cB(\bA)$-$\cE(\Xi)$ measurable.

Finally, we say $\fA$ is a Markovian action if 
\begin{align}\label{eq:MarkovControl}
\bP(A_t\in B\,|\,\sU_t) = \bP(A_t\in B\,|\,\sigma(X_t)),\quad B\in\cB(\bA),\,t\in\bN.
\end{align}
Then, with similar reasoning as before, for any $t\in\bN$ there is a mapping $\pi_t:(\bX,\cB(\bX))\to(\Lambda,\cE(\Lambda))$ such that $\omega\mapsto\pi_t(X_t(\omega))$ is a regular version of $\bP(A_t\in \,\cdot\,|\,\sigma(X_t))$, and 
\begin{align}\label{eq:MarkovControlkernel}
\bP(A_t\in B\,|\,\sigma(X_t)) = [\pi_t(X_t)](B),\quad\bP-a.s.,\,B\in\cB(\bA).
\end{align} 

\subsection{Law-invariant convex risk measures}
Let us first recall the definition of a (proper) convex risk measure on $L^\infty(\Omega,\sH,\bP)$. 
\begin{definition}
$\rho:L^\infty(\Omega,\sH,\bP)\to(-\infty,\infty]$ is a convex risk measure if 
\begin{itemize}
\item[(i)] (Translation invariance) $\rho(X+c)=\rho(X)+c$ for any $Z\in L^\infty(\Omega,\sH,\bP)$ and $c\in\bR$;
\item[(ii)] (Motonicity) $\rho(Z)\le \rho(Z')$ for any $Z,Z'\in L^\infty(\Omega,\sH,\bP)$ with $Z\le Z'$ $\bP$-a.s.;  
\item[(iii)] (Convexity) $\rho(\theta Z+(1-\theta)Z')\le \theta\rho(Z)+(1-\theta)\rho(Z')$ for any $Z,Z'\in L^\infty(\Omega,\sH,\bP)$ and $\theta\in[0,1]$.
\end{itemize}
We say $\rho$ is normalized if $\rho(0)=0$, and $\rho$ is law-invariant if $\cL(Z)=\cL(Z')$ implies $\rho(Z)=\rho(Z')$, where $\cL(Z)$ stands for the law of $Z$.
\end{definition}

We can employ a law-invariant convex risk measure $\rho$ to define a functional on $\cP_b(\bR)$, by using a quantile transform. More precisely, we consider a mapping, denoted by $\sigma$, from the $\cP_b(\bR)$ to $(-\infty,\infty]$ defined by $\sigma(\mu):=\rho(F^{-1}_\mu(U))$, where $F_\mu(r):=\mu((-\infty,r])$, $F^{-1}_\mu(u):=\inf\set{r\in\bR: F(r)\ge u}$, and $U\sim\text{Unif}(0,1)$ in $L^\infty(\Omega,\sH,\bP)$. Adopting a distributional perspective can sometimes be beneficial in advancing the theoretical development of risk measures; see, e.g. \cite{Weber2006Distribution, Acciaio2013Are, Delage2019Dicesion, Fadina2023Framework}. For the remainder of this paper, we  focus on this distributional viewpoint and to facilitate this viewpoint, we introduce the following definition.
\begin{definition}\label{def:DCRM}
$\sigma:\cP_b(\bR)\to(-\infty,\infty]$ is a law-invariant convex risk measure (viewed at the level of distributions) if it satisfies 
\begin{itemize}
\item[(i)] $\sigma(\mu*\delta_c)=\sigma(\mu)+c$ for any $c\in\bR$.
\item[(ii)] For any $\mu,\mu'\in\cP_b(\bR)$, $\mu((-\infty,r])\ge \mu'((-\infty,r])$ for all $r\in\bR$ implies $\sigma(\mu)\le\sigma(\mu')$.
\item[(iii)] For any $\zeta\in\cP_b(\bR^2)$ with bounded support, denote the first and second marginal of $\zeta$ by $\zeta_0$ and $\zeta_1$, respectively. For $\theta\in[0,1]$, let $\zeta_\theta\in\cP_b(\bR)$ be characterized by 
$$\zeta_\theta((-\infty,r])=\zeta(\set{(r_0,r_1)\in\bR^2: (1-\theta)r_0+\theta r_1\le r}).$$
Then, $\sigma(\zeta_\theta)\le \theta\,\sigma(\zeta_0) + (1-\theta)\,\sigma(\zeta_1)$.
\end{itemize}
We say $\sigma$ is normalized if $\sigma(\delta_0)=0$.
\end{definition}

\begin{remark}
We could replace all the $\cP_b$ in \cref{def:DCRM} with $\cP$ so that the definition covers probabilities that may not have bounded support. As we subsequently assume boundedness, however, the regularities of such extended definition will be investigated in future work.
\end{remark}

\begin{remark}\label{rmk:DCRM}
In this remark, we suggest that after adopting \cref{def:DCRM}, we retain access to some of the important results previously established for law-invariant convex risk measures defined on the space of random variables. Below we consider $\sigma$ as in \cref{def:DCRM}, and set $\rho(Z):=\sigma(\cL(Z))$.
\begin{itemize}
\item 
Here we provide some discussion related to the Fatou's property of law-invariant convex risk measure (cf. \cite[Theorem 7]{Frittelli2005Law}). Suppose $(\mu^{n})_{n\in\bN}\subset\cP_b(\bR)$ converges weakly to $\mu^0\in\cP_b(\bR)$, and there is a bounded $B\subset\bR$ such that $\supp \mu^n\subseteq B$ for $n\in\bN$. Then,
$$\liminf_{n\to\infty}\sigma(\mu^n)\ge\sigma(\mu^0).$$ 
To see this, we first observe that $\rho$ is a law-invariant convex risk measure. Next, in view of the Skorohod representation (cf. \cite[Theorem 8.5.4]{Bogachev2007book}), there is a sequence $(Z^n)_{n\in\bN}\subset L^\infty(\Omega,\sH,\bP)$ such that $\cL(Z^n)=\mu^n$ for $n\in\bN$ and $(Z^n)_{n\in\bN}$ converges $\bP$-a.s. Note that $\bP(Z_n\in B)=1$. Let $Z^0$ be the pointwise limit of $(Z^n)_{n\in\bN}$. As almost sure convergence implies convergence in distribution, we have $\cL(Z^0)=\mu^0$. Invoking the Fatou's property of $\rho$, we prove the claim.
\item By applying the Kusuoka representation on $\rho$ (cf. \cite[Theorem 7]{Frittelli2005Law}), there is a convex $\beta:\cP([0,1])\to[0,\infty]$ such that
\begin{align}\label{eq:DCRMKusuoka}
\sigma(\mu)=\sup_{\eta\in\cP([0,1])} \left\{ \int_{[0,1]} \overline\avar_{\kappa}(\mu)\,\eta(\dif\kappa) - \beta_t(\eta) \right\}
\end{align}
for any $\mu\in\cP_b(\bR)$, where
\begin{align}\label{eq:DefAVaR}
\overline\avar_{\kappa}(\mu) := \inf_{q\in\bR}\left\{ q + \kappa^{-1} \int_{\bR} (z-q)_+\,\mu(\dif z) \right\}, \quad \kappa\in(0,1]
\end{align}
and
\begin{align}\label{eq:DefAVAR0}
\overline\avar_{0}(\mu) := \inf\set{r\in\bR:\mu((r,\infty))=0}.
\end{align}
Note \eqref{eq:DefAVaR} follows from the extremal representation of average value-at-risk in \cite{Rockafellar2000Optimization}, and \eqref{eq:DefAVAR0} is the essential supremum of a real-valued random variable with distribution $\mu$.
\item Let $\mu,\mu'\in\cP_b(\bR)$. We say $\mu$ is dominated by $\mu'$ in convex order if for any convex $g:\bR\to\bR$, we have $\int_{\bR} g(r)\,\mu(\dif r)\le\int_{\bR} g(r)\,\mu'(\dif r)$. It follows immediately from \eqref{eq:DefAVaR} and \eqref{eq:DefAVAR0} that for $\kappa\in[0,1]$, $\overline\avar_{\kappa}$ is consistent with convex order. Consequently, $\sigma$ is also consistent with convex order.
\end{itemize}
\end{remark}

\subsection{DRMs at the level of  distributions}\label{subsec:DDRM}

In this section, we define DRMs at the level of distribution as a nested composition of law-invariant convex risk measures specified in \cref{def:DCRM}. To achieve this, we adhere to \cref{def:DCRM}, and consider $\sigma_0$ and $(\sigma_{t,x})_{(t,x)\in\bN\times\bX}$ as risk measures that are employed at various times and states.  We make the following standing assumption throughout the paper.
\begin{assumption}\label{assump:mbl}
For any $t\in\bN$ and compact $K\subset\bR$, $(x,\mu)\in \bX\times\cP(K)\mapsto\sigma_{t,x}(\mu)\in(-\infty,\infty]$ is $\cB(\bX)\otimes\cB(\cP(K))$-$\cB(\overline{\bR})$ measurable and for each $t\in\bN$ there is a $b_t\ge 0$ such that $|\sigma_{t,x}(\delta_0)|\le b_t$ for all $x\in\bX$. 
\end{assumption}
We refer to \cref{subsec:ExampleSigma} for two examples of $\sigma_{t,x}$.

Let $\gamma\in[0,1]$  be the discount factor. Recall the filtrations defined in the beginning of \cref{sec:Setup}. In particular, the $\sigma$-algebra, $\sU_t$, used below stands for the information available to the agent at time $t$. For $\fZ=(Z_t)_{t\in\bN_0}\subset L^\infty(\Omega,\sH,\bP)$, we first define the finite horizon DRM 
\begin{align}\label{eq:DefsigmatT}
\varsigma_{t,x,T}^{\fX,\gamma}(\fZ) :=
\begin{cases}
\sigma_{t,x}\left(P^{W^T_t|\sU_t}\right), & 0<t<T,\\
\sigma_{T,x}\left(P^{Z_T|\sU_T}\right), & t=T,
\end{cases}
\end{align}
where for $0<t<T$
$$
W_t^T=Z_t + \gamma\;\varsigma^{\fX,\gamma}_{t+1,X_{t+1},T}(\fZ).
$$ 
For $t=0$ and $T\in\bN$, we define $$\varsigma^{\fX,\gamma}_{0,T}(\fZ):=\sigma_{0}\left(
\cL\left(Z_0+\gamma\;\varsigma^\fX_{1,X_{1},T}(\fZ) \right)
\right).$$
Notably, the $x$ entry in $\sigma_{t,x}$ provides us with the flexibility of adjusting risk aversion based on the value of the state process $\fX$ at that time. This allows for situational adaptation and varying degrees of risk aversion depending on the circumstances.  In \cref{lem:finiteTsigma} below, we validate the definition in \eqref{eq:DefsigmatT} for $0<t<T$ . The validity of $\varsigma^{\fX,\gamma}_{0,T}(\fZ)$ follows automatically once \cref{lem:finiteTsigma} is proved. Below, we say $(\sigma_{t,x})_{(t,x)\in\bN\times\bX}$ is normalized if $\sigma_{t,x}$ is normalized for each $(t,x)\in\bN\times\bX$. Obviously, $(\sigma_{t,x})_{(t,x)\in\bN\times\bX}$ is normalized if and only if $b_t=0$ (where $b_t$ is defined in \Cref{assump:mbl}) for $t\in\bN$.
\begin{lemma}\label{lem:finiteTsigma}
Let $\fZ=(Z_t)_{t\in\bN_0}\subset L^\infty(\Omega,\sH,\bP)$ and $t,T\in\bN$ with $0<t<T$. We have $(x,\omega)\mapsto \varsigma_{t,x,T}^{\fX,\gamma}(\fZ) (\omega)$ is $\cB(\bX)\otimes\sU_t$-$\cB(\bR)$ measurable and $\|W_t^T\|_\infty\le \sum_{r=t}^{T} \gamma^{r-t} (b_t+\|Z_r\|_\infty)$. Consequently, if $(\sigma_{t,x})_{(t,x)\in\bN\times\bX}$ is normalized, then $\|W_t^T\|_\infty\le \sum_{r=t}^{T} \gamma^{r-t}\|Z_r\|_\infty$.
\end{lemma}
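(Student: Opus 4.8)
The plan is a backward induction on $t$, running from $t=T$ down to $t=1$, in which at each step we establish three things simultaneously: the $\cB(\bX)\otimes\sU_t$-$\cB(\bR)$ measurability of $(x,\omega)\mapsto\varsigma_{t,x,T}^{\fX,\gamma}(\fZ)(\omega)$, the norm bound on $W_t^T$, and the auxiliary estimate $\sup_{x\in\bX}\|\varsigma_{t,x,T}^{\fX,\gamma}(\fZ)\|_\infty<\infty$. The last item is what keeps the recursion running: it ensures that $W_{t-1}^T=Z_{t-1}+\gamma\,\varsigma_{t,X_t,T}^{\fX,\gamma}(\fZ)$ is a genuine element of $L^\infty(\Omega,\sH,\bP)$, so that its regular conditional distribution given $\sU_{t-1}$ is well defined (as $\bR$ is Polish, a regular version exists).

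For the base case $t=T$ we have $\varsigma_{T,x,T}^{\fX,\gamma}(\fZ)(\omega)=\sigma_{T,x}\big(P^{Z_T|\sU_T}(\omega,\cdot)\big)$. Since $\|Z_T\|_\infty<\infty$, the compact set $K_T:=[-\|Z_T\|_\infty,\|Z_T\|_\infty]$ satisfies $P^{Z_T|\sU_T}(\omega,K_T)=\bP(Z_T\in K_T\,|\,\sU_T)(\omega)=1$ for $\bP$-a.e.\ $\omega$; because $\sF_1\subseteq\sU_T$ contains all $\bP$-negligible sets, the exceptional set lies in $\sU_T$, so we may redefine $P^{Z_T|\sU_T}(\omega,\cdot):=\delta_0$ there, still obtaining a regular version of $\bP(Z_T\in\cdot\,|\,\sU_T)$ that is now a $\sU_T$-$\cB(\cP(K_T))$ measurable map into $\cP(K_T)$ (using $\cB(\cP(K_T))=\cE(\cP(K_T))$ from \cref{lem:sigmaAlgBE}). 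Composing with the $\cB(\bX)\otimes\cB(\cP(K_T))$-$\cB(\overline\bR)$ measurable map $(x,\mu)\mapsto\sigma_{T,x}(\mu)$ supplied by \cref{assump:mbl} gives measurability. Finiteness and the bound $|\varsigma_{T,x,T}^{\fX,\gamma}(\fZ)|\le b_T+\|Z_T\|_\infty$ come from a sandwich: since $\supp P^{Z_T|\sU_T}(\omega,\cdot)\subseteq K_T$, monotonicity in \cref{def:DCRM}(ii) gives $\sigma_{T,x}(\delta_{-\|Z_T\|_\infty})\le\sigma_{T,x}\big(P^{Z_T|\sU_T}(\omega,\cdot)\big)\le\sigma_{T,x}(\delta_{\|Z_T\|_\infty})$, and translation invariance in \cref{def:DCRM}(i) together with $|\sigma_{T,x}(\delta_0)|\le b_T$ finishes it.

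For the inductive step, assume the claim at $t+1$. Then $\varsigma_{t+1,X_{t+1},T}^{\fX,\gamma}(\fZ)$ is $\sU_{t+1}$-measurable (compose the $\cB(\bX)\otimes\sU_{t+1}$ measurable map with $\omega\mapsto(X_{t+1}(\omega),\omega)$, using that $X_{t+1}$ is $\sF_{t+1}$-measurable and $\sF_{t+1}\subseteq\sU_{t+1}$) and bounded, so $W_t^T=Z_t+\gamma\,\varsigma_{t+1,X_{t+1},T}^{\fX,\gamma}(\fZ)\in L^\infty(\Omega,\sH,\bP)$ with $\|W_t^T\|_\infty\le\|Z_t\|_\infty+\gamma\sup_{x\in\bX}\|\varsigma_{t+1,x,T}^{\fX,\gamma}(\fZ)\|_\infty$; unwinding this recursion down to the base estimate yields the stated bound. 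Then $P^{W_t^T|\sU_t}$ exists, is $\sU_t$-measurable, and after the same null-set redefinition — now with $K_t:=[-\|W_t^T\|_\infty,\|W_t^T\|_\infty]$, the exceptional set again in $\sU_t$ via $\sF_1\subseteq\sU_t$ — takes values in $\cP(K_t)$. Since $\varsigma_{t,x,T}^{\fX,\gamma}(\fZ)(\omega)=\sigma_{t,x}\big(P^{W_t^T|\sU_t}(\omega,\cdot)\big)$, the composition exactly as in the base case gives $\cB(\bX)\otimes\sU_t$ measurability, while the sandwich argument gives $\sup_{x\in\bX}\|\varsigma_{t,x,T}^{\fX,\gamma}(\fZ)\|_\infty\le b_t+\|W_t^T\|_\infty<\infty$, closing the induction. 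The final assertion is the case $b_t=0$, which is exactly the normalized case. The main obstacle is the measurability bookkeeping rather than any nontrivial estimate: one must arrange the regular conditional distribution to take values in $\cP(K)$ for a \emph{fixed} compact $K$ so that \cref{assump:mbl} is applicable — this is where the null-set redefinition and the inclusion $\sF_1\subseteq\sU_t$ enter — and one must keep track of which filtration level each object lives at, since $W_t^T$ is a priori only $\sH$-measurable and $\varsigma_{t+1,X_{t+1},T}^{\fX,\gamma}(\fZ)$ is $\sU_{t+1}$-measurable, yet after conditioning on $\sU_t$ the quantity $\varsigma_{t,x,T}^{\fX,\gamma}(\fZ)$ must come out $\cB(\bX)\otimes\sU_t$-measurable.
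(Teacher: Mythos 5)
Your proof is correct and follows essentially the same route as the paper's: redefine the regular conditional distribution on a $\bP$-negligible set (which lies in $\sU_t$ because $\sF_1\subseteq\sU_t$ contains all negligible sets) so that it takes values in $\cP(K)$ for a fixed compact $K$, compose with the jointly measurable map $(x,\mu)\mapsto\sigma_{t,x}(\mu)$ from \cref{assump:mbl}, and obtain the norm bound from translation invariance and monotonicity via a sandwich between Dirac masses, then induct backward. The paper only writes out the step at $t=T$ and the passage to $T-1$ before invoking induction, whereas you spell out the inductive bookkeeping explicitly; the content is the same.
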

\begin{proof}
We start by analyzing the measurability of $(x,\omega)\mapsto \sigma_{T,x}\left(P^{Z_T|\sU_T}(\omega,\cdot)\right)$. Note that $Z_T\in[-\|Z_T\|_\infty,\|Z_{T}\|_\infty]$, $\bP$-a.s. Then, by \eqref{eq:RegCondProbInt},
\begin{align}\label{eq:CondDistnZUT}
P^{Z_T|\sU_{T}}\in\cP([-\|Z_T\|_\infty,\|Z_T\|_\infty]),\quad\bP-\text{a.s.}
\end{align}
Let us momentarily consider a version of $P^{Z_T|\sU_T}$, denoted by $\overline P^{Z_T|\sU_T}$, such that \eqref{eq:CondDistnZUT} is true for all $\omega$. For this version, in view of the discussion in \cref{subsec:RegCondDist},  we have $\omega\mapsto \overline P^{Z_T|\sU_T}(\omega,\cdot)$ is $\sU_T$-$\cB(\cP([-\|Z_T\|_\infty,\|Z_T\|_\infty]))$ measurable and thus $(x,\omega)\mapsto \sigma_{T,x}\big(\overline P^{Z_T|\sU_T}(\omega,\cdot)\big)$ is $\cB(\bX)\otimes\sU_T$-$\cB(\bR)$ measurable. Moreover, due to the uniqueness of regular conditional distribution discussed in \cref{subsec:RegCondDist}, it is true for $\bP$-almost every $\omega$ that for any $x\in\bX$, $\sigma_{T,x}\left( P^{Z_T|\sU_T}(\omega,\cdot)\right)=\sigma_{T,x}\big(\overline P^{Z_T|\sU_T}(\omega,\cdot)\big)$. Since $\sF_1$ contains all the $\bP$-negligible sets and $\sF_t\subseteq \sU_T$, the measurability is de facto version independent. 
The above, together with \eqref{eq:DefsigmatT}, \cref{def:DCRM} (i) and (ii), implies that
\begin{align*}
\left|\varsigma^{\fX,\gamma}_{T,X_T,T}(\fZ)\right|\le b_T+\|Z_T\|_\infty,\quad\bP\text{-a.s.}
\end{align*}
Consequently,
\begin{align}\label{eq:Wbound}
\left|W_{T-1}^{T}\right| = \left|Z_{T-1}+\gamma\varsigma^{\fX,\gamma}_{T,X_{T},T}(\fZ)\right|\le \|Z_{T-1}\|_\infty +  \gamma(b_{T}+\|Z_{T}\|_\infty),\quad\bP\text{-a.s.}
\end{align}
Inducting backward with a similar reasoning, we conclude the proof.
\end{proof}

The finite horizon DRM defined in \eqref{eq:DefsigmatT} satisfies the time consistency property below.
\begin{proposition}\label{prop:TimeConsistency}
Let $0\le s\le t < T$. Consider $\fZ=(Z_n)_{n\in\bN_0}, \fZ'=(Z'_n)_{n\in\bN_0}\subset L^\infty(\Omega,\sH,\bP)$. If $\bP(Z_r=Z_r')=1$ for $r=s,\dots,t$ and $\bP\big(\varsigma_{t+1,x,T}^{\fX,\gamma}(\fZ) \le \varsigma_{t+1,x,T}^{\fX,\gamma}(\fZ'),\,x\in\bX\big)=1$,  then $\bP\big(\varsigma_{s,x,T}^{\fX,\gamma}(\fZ) \le \varsigma_{s,x,T}^{\fX,\gamma}(\fZ'),\,x\in\bX\big)=1$. 
\end{proposition}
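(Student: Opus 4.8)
The plan is to prove the claim by backward induction on the epoch index, working down from $t+1$ to $s$. The key observation is that the recursion \eqref{eq:DefsigmatT} is a pointwise-in-$x$ composition of the single-step operator $\mu\mapsto\sigma_{t,x}(\mu)$ with the map that sends $\omega$ to the regular conditional distribution of $W^T_t=Z_t+\gamma\,\varsigma^{\fX,\gamma}_{t+1,X_{t+1},T}(\fZ)$ given $\sU_t$. So the induction step reduces to the following monotonicity fact: if two $\sU_{r+1}$-measurable (and bounded, by \cref{lem:finiteTsigma}) families $(x,\omega)\mapsto V_{r+1}(x,\omega)$ and $V'_{r+1}(x,\omega)$ satisfy $V_{r+1}(x,\cdot)\le V'_{r+1}(x,\cdot)$ $\bP$-a.s.\ for all $x$ (in the $\sU_{r+1}$-wise sense that matters), and $Z_r=Z'_r$ a.s., then the corresponding $W$'s satisfy $W_r^T\le (W_r^T)'$ a.s., and hence, applying $\sigma_{r,x}$ to the conditional laws, $\varsigma^{\fX,\gamma}_{r,x,T}(\fZ)\le\varsigma^{\fX,\gamma}_{r,x,T}(\fZ')$ a.s.\ for all $x$.

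First I would set up the base case: at epoch $t+1$ the hypothesis is exactly $\bP(\varsigma^{\fX,\gamma}_{t+1,x,T}(\fZ)\le\varsigma^{\fX,\gamma}_{t+1,x,T}(\fZ'),\ x\in\bX)=1$. Then for the inductive step at a generic $r$ with $s\le r\le t$, I would argue as follows. Since $X_{r+1}$ is $\sF_{r+1}\subseteq\sU_{r+1}$-measurable, the random variable $\varsigma^{\fX,\gamma}_{r+1,X_{r+1},T}(\fZ)(\omega)$ is $\sU_{r+1}$-measurable (by \cref{lem:finiteTsigma}), and by the induction hypothesis $\varsigma^{\fX,\gamma}_{r+1,X_{r+1},T}(\fZ)\le\varsigma^{\fX,\gamma}_{r+1,X_{r+1},T}(\fZ')$ $\bP$-a.s.; together with $Z_r=Z'_r$ a.s.\ (valid since $r\in\{s,\dots,t\}$) this gives $W^T_r\le (W^T_r)'$ $\bP$-a.s. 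Passing to regular conditional distributions given $\sU_r$, the ordering is preserved: for $\bP$-a.e.\ $\omega$, $P^{W^T_r|\sU_r}(\omega,\cdot)$ is stochastically dominated by $P^{(W^T_r)'|\sU_r}(\omega,\cdot)$, i.e.\ $P^{W^T_r|\sU_r}(\omega,(-\infty,a])\ge P^{(W^T_r)'|\sU_r}(\omega,(-\infty,a])$ for all $a\in\bR$. I would justify this by noting $\1_{(-\infty,a]}$ is decreasing, so taking conditional expectations of $\1_{\{W^T_r\le a\}}\ge\1_{\{(W^T_r)'\le a\}}$ (using a countable dense set of $a$ and right-continuity of CDFs to get a single null set). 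Applying \cref{def:DCRM}(ii) to $\sigma_{r,x}$ pointwise in $x$ and $\omega$ then yields $\sigma_{r,x}(P^{W^T_r|\sU_r}(\omega,\cdot))\le\sigma_{r,x}(P^{(W^T_r)'|\sU_r}(\omega,\cdot))$ for all $x$, for $\bP$-a.e.\ $\omega$; by \eqref{eq:DefsigmatT} this is precisely $\bP(\varsigma^{\fX,\gamma}_{r,x,T}(\fZ)\le\varsigma^{\fX,\gamma}_{r,x,T}(\fZ'),\ x\in\bX)=1$, completing the step. Iterating from $r=t$ down to $r=s$ gives the conclusion.

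The main obstacle — more bookkeeping than genuine difficulty — is the careful handling of the "for all $x\in\bX$" quantifier inside the probability together with the $\bP$-null sets arising from conditional expectations: one must ensure the exceptional null set can be chosen uniformly in $x$ and uniformly over the (uncountably many) thresholds $a$. This is handled exactly as in the proof of \cref{lem:finiteTsigma}: use a version of the regular conditional distribution that is supported on a fixed compact set for every $\omega$ (available by \cref{lem:finiteTsigma}'s a priori bound on $\|W^T_r\|_\infty$), reduce the stochastic-dominance comparison to rational thresholds, and invoke the measurability in $(x,\mu)$ from \cref{assump:mbl} so that the set where the desired inequality holds is measurable; then version-independence follows since $\sF_1$ contains all $\bP$-negligible sets. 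One also notes that $\varsigma^{\fX,\gamma}_{r+1,X_{r+1},T}$ at two arguments $\fZ,\fZ'$ need only be compared through their (conditional) laws, which is exactly what \cref{def:DCRM}(ii) consumes, so no finer pathwise information is needed.
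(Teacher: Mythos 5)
Your proof is correct and follows essentially the same route as the paper's: reduce to a single backward-induction step, establish the pathwise ordering $W^T_r\le (W^T_r)'$ from the hypothesis on epoch $r+1$, pass to stochastic dominance of the regular conditional laws given $\sU_r$, and invoke \cref{def:DCRM}(ii). The only (immaterial) difference is how the single exceptional null set for the dominance is obtained: you use countably many rational thresholds plus right-continuity of the conditional CDFs, whereas the paper works with the joint regular conditional distribution of $(W_t,W_t')$ and notes that it assigns zero mass to $\set{(w_1,w_2):w_1>w_2}$ almost surely.
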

\begin{proof}
The proof is conceptually straightforward, with the exception of some technicalities needed to address the regular conditional distributions. It is sufficient to prove the case of $s=t>0$ as the case of $s<t$ and/or $s=0$ can be proved by induction. We fix $T$ for the remainder of the proof. In view of \eqref{eq:DefsigmatT}, let $W_t=Z_t + \gamma\;\varsigma^{\fX,\gamma}_{t+1,X_{t+1},T}(\fZ)$ and $W'_t=Z_t + \gamma\;\varsigma^{\fX,\gamma}_{t+1,X_{t+1},T}(\fZ')$. Let $\overline P^{(W_t,W_t')|\sU_t}$ be the regular conditional distribution of $(W_t,W_t')$ given $\sU_t$. Since $W_t\le W'_t$, $\bP$-a.s., by \cref{eq:DefsigmatT}, \cref{def:RegCondDist} (iii), $\int_{\bR^2}\1_{(0,\infty)}(w_1-w_2) \overline P^{(W_t,W_t')|\sU_t}(\dif w_1 \dif w_2)=0$, $\bP$-a.s. It follows that
\begin{align}\label{eq:PWPW'}
\bP\left(\overline P^{W_t|\sU_t}((-\infty,r]) \ge \overline P^{W_t'|\sU_t}((-\infty,r],\,r\in\bR\right) = 1,
\end{align}
where $\overline P^{W_t|\sU_t}:=\overline P^{W_t,W'_t|\sU_t}(\cdot\times\bR)$ and $\overline P^{W'_t|\sU_t}:=\wt P^{W_t,W'_t|\sU_t}(\bR\times\cdot)$ are the first and second marginal distribution of $P^{W_t,W_t'|\sU_t}$, respectively. Note additionally that $\overline P^{W_t|\sU_t}$ and $\overline P^{W'_t|\sU_t}$ are respectively the regular conditional distributions of $W_t$ and $W_t'$ given $\sU_t$. Since regular conditional distributions are unique up to a $\bP$-negligible set, by \eqref{eq:PWPW'} and \cref{def:DCRM} (ii), 
we have
\begin{align*}
\bP\left(\varsigma_{t,x,T}^{\fX,\gamma}(\fZ) \le \varsigma_{t,x,T}^{\fX,\gamma}(\fZ'),\,x\in\bX \right) = \bP\left(\sigma_{t,x}\left(P^{W_t|\sU_t}\right) \le \sigma_{t,x}\left(P^{W'_t|\sU_t}\right),\, x\in\bX \right)=1.
\end{align*}
\end{proof}

In what follows, we define the infinite horizon DRM. Consider $\fZ=(Z_t)_{t\in\bN}\subset L^\infty(\Omega,\sH,\bP)$ such that $\|\fZ\|_\infty:=\sup_{t\in\bN_0}\|Z_t\|_\infty<\infty$. Additionally, we assume $\gamma\in[0,1)$ and suppose that $(\sigma_{t,x})_{(t,x)\in\bN\times\bX}$ is normalized. We note that $(\sigma_{t,x})_{(t,x)\in\bN\times\bX}$ being normalized is not necessarily required for the construction of infinite horizon DRM, but normalization greatly eases the analysis. We define
\begin{align}\label{eq:DefsigmatInfty}
\varsigma^{\fX,\gamma}_{0,\infty}(\fZ):=\lim_{T\to\infty}\varsigma^{\fX,\gamma}_{0,T}(\fZ) \quad \text{and} \quad \varsigma_{t,x,\infty}^{\fX,\gamma}(\fZ):=\lim_{T\to\infty}\varsigma_{t,x,T}^{\fX,\gamma}(\fZ),\quad(t,x)\in\bN\times\bX.
\end{align}  
The validity of the definitions in\eqref{eq:DefsigmatInfty} are justified in \cref{lem:sigmatTConv} below. For the remainder of this paper, we default to the $\cB(\bX)\otimes\sU_t$-$\cB(\bR)$ measurable version whenever we engage with $\varsigma^{\fX,\gamma}_{t,x,\infty}(\fZ)$.
\begin{lemma}\label{lem:sigmatTConv}
Suppose that $\sup_{t\in\bN_0}\|Z_t\|_\infty<\infty$, $\gamma\in[0,1)$ and $(\sigma_{t,x})_{(t,x)\in\bN\times\bX}$ is normalized.  For any $t\in\bN$, we have $\bP\big(\big(\varsigma^{\fX,\gamma}_{t,x,T}(\fZ)\big)_{T\in\bN}\text{ converges for any } x\in\bX\big)=1$ and $\bP\big(\big|\varsigma^{\fX,\gamma}_{t,x,\infty}(\fZ)\big| \le (1-\gamma)^{-1}\|\fZ\|_\infty,\, x\in\bX\big)=1$.
Moreover, there is a $\overline\varsigma^{\fX,\gamma}_{t,x,\infty}(\fZ)$ such that $\bP\big(\varsigma^{\fX,\gamma}_{t,x,\infty}(\fZ)=\overline\varsigma^{\fX,\gamma}_{t,x,\infty}(\fZ),\,x\in\bX\big)=1$ and $(x,\omega)\mapsto\overline\varsigma^{\fX,\gamma}_{t,x,\infty}(\fZ)(\omega)$ is $\cB(\bX)\otimes\sU_t$-$\cB(\bR)$ measurable. Finally, $\varsigma^{\fX,\gamma}_{0,T}(\fZ)$ converges as $T\to\infty$ and $\varsigma^{\fX,\gamma}_{0,\infty}(\fZ)$ is finite.
\end{lemma}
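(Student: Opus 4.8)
The plan is to exploit that normalization, together with translation invariance and monotonicity (conditions (i) and (ii) of \cref{def:DCRM}), turns each backward step of the recursion into a $\gamma$-contraction on tail increments: I will show that for $t\le T$, outside a single $\bP$-null set not depending on $x$,
$\sup_{x\in\bX}\big|\varsigma^{\fX,\gamma}_{t,x,T+1}(\fZ)-\varsigma^{\fX,\gamma}_{t,x,T}(\fZ)\big|\le\gamma^{\,T-t+1}\|\fZ\|_\infty$,
so that $\big(\varsigma^{\fX,\gamma}_{t,x,T}(\fZ)\big)_{T\ge t}$ is, $\bP$-a.s.\ and uniformly in $x$, a Cauchy sequence. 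Set $M:=(1-\gamma)^{-1}\|\fZ\|_\infty$. By \cref{lem:finiteTsigma} and normalization, $\|W_t^T\|_\infty\le\sum_{r=t}^T\gamma^{r-t}\|Z_r\|_\infty\le M$, hence for $T>t$ the measure $P^{W_t^T|\sU_t}(\omega,\cdot)$ is supported in $[-M,M]$ for $\bP$-a.e.\ $\omega$; comparing its CDF with those of $\delta_{\pm M}$ and applying \cref{def:DCRM}(ii) and then (i) with normalization gives $\big|\varsigma^{\fX,\gamma}_{t,x,T}(\fZ)\big|\le M$ for every $x$, outside one $\bP$-null set (the case $T=t$ is identical, using $P^{Z_t|\sU_t}$).

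For the contraction I argue by backward induction on $t$. Base case $t=T$: since $W_T^{T+1}=Z_T+\gamma\,\sigma_{T+1,X_{T+1}}\big(P^{Z_{T+1}|\sU_{T+1}}\big)$ and $\big|\sigma_{T+1,X_{T+1}}\big(P^{Z_{T+1}|\sU_{T+1}}\big)\big|\le\|Z_{T+1}\|_\infty$ $\bP$-a.s.\ (as established in \cref{lem:finiteTsigma}), we get $|W_T^{T+1}-Z_T|\le\gamma\|Z_{T+1}\|_\infty$ $\bP$-a.s.; passing to regular conditional distributions given $\sU_T$ and using \cref{def:DCRM}(ii), (i) together with $P^{(Z_T\pm c)|\sU_T}=P^{Z_T|\sU_T}*\delta_{\pm c}$ yields $\big|\varsigma^{\fX,\gamma}_{T,x,T+1}(\fZ)-\varsigma^{\fX,\gamma}_{T,x,T}(\fZ)\big|\le\gamma\|Z_{T+1}\|_\infty\le\gamma\|\fZ\|_\infty$ for all $x$, $\bP$-a.s. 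Inductive step from $t+1$ to $t$ (with $t<T$): from $W_t^{T+1}-W_t^T=\gamma\big(\varsigma^{\fX,\gamma}_{t+1,X_{t+1},T+1}(\fZ)-\varsigma^{\fX,\gamma}_{t+1,X_{t+1},T}(\fZ)\big)$ and the inductive bound, $|W_t^{T+1}-W_t^T|\le\gamma^{\,T-t+1}\|\fZ\|_\infty$ $\bP$-a.s., so $W_t^{T+1}\le W_t^T+\gamma^{\,T-t+1}\|\fZ\|_\infty$ $\bP$-a.s.; exactly as in the proof of \cref{prop:TimeConsistency}, the CDF of $P^{W_t^{T+1}|\sU_t}$ dominates, $\bP$-a.s.\ with exceptional set independent of $x$, the CDF of $P^{W_t^T|\sU_t}$ shifted by $\gamma^{\,T-t+1}\|\fZ\|_\infty$, so \cref{def:DCRM}(ii) and (i) give $\varsigma^{\fX,\gamma}_{t,x,T+1}(\fZ)\le\varsigma^{\fX,\gamma}_{t,x,T}(\fZ)+\gamma^{\,T-t+1}\|\fZ\|_\infty$ for all $x$, $\bP$-a.s.; the reverse inequality follows symmetrically from $W_t^{T+1}\ge W_t^T-\gamma^{\,T-t+1}\|\fZ\|_\infty$.

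Taking the union over $T$ of these (countably many) null sets — all of them in $\sF_1\subseteq\sU_t$ since $\bP$-negligible sets lie in $\sF_1$ — we obtain a $\bP$-full, $\sU_t$-measurable set on which, for every $x$, the sequence $\big(\varsigma^{\fX,\gamma}_{t,x,T}(\fZ)\big)_{T\ge t}$ is Cauchy (summability of $\gamma^{\,T-t+1}$) and bounded by $M$; this proves the first two assertions and defines $\varsigma^{\fX,\gamma}_{t,x,\infty}(\fZ)$ as the limit. For the measurable version: each $(x,\omega)\mapsto\varsigma^{\fX,\gamma}_{t,x,T}(\fZ)(\omega)$ is $\cB(\bX)\otimes\sU_t$-$\cB(\bR)$ measurable by \cref{lem:finiteTsigma}, so $(x,\omega)\mapsto\overline g(x,\omega):=\limsup_{T\to\infty}\varsigma^{\fX,\gamma}_{t,x,T}(\fZ)(\omega)$ is $\cB(\bX)\otimes\sU_t$-$\cB(\overline\bR)$ measurable, and $\overline\varsigma^{\fX,\gamma}_{t,x,\infty}(\fZ):=(\overline g\wedge M)\vee(-M)$ is real-valued, $\cB(\bX)\otimes\sU_t$-$\cB(\bR)$ measurable, and coincides with $\varsigma^{\fX,\gamma}_{t,x,\infty}(\fZ)$ for all $x$ on the $\bP$-full set above.

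Finally, the $t=0$ case: $\varsigma^{\fX,\gamma}_{0,T}(\fZ)$ is deterministic, and from $\big|\big(Z_0+\gamma\varsigma^{\fX,\gamma}_{1,X_1,T+1}(\fZ)\big)-\big(Z_0+\gamma\varsigma^{\fX,\gamma}_{1,X_1,T}(\fZ)\big)\big|\le\gamma^{\,T+1}\|\fZ\|_\infty$ $\bP$-a.s.\ (the $t=1$ case of the contraction) together with monotonicity and translation invariance of $\sigma_0$ applied to laws of ordered random variables, one gets $\big|\varsigma^{\fX,\gamma}_{0,T+1}(\fZ)-\varsigma^{\fX,\gamma}_{0,T}(\fZ)\big|\le\gamma^{\,T+1}\|\fZ\|_\infty$, so the sequence is Cauchy; its limit is finite because the argument of $\sigma_0$ has support in $[-M,M]$ and $\sigma_0$, being proper (i.e.\ $\sigma_0(\delta_0)<\infty$), is finite on such measures by \cref{def:DCRM}(i)--(ii). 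The main obstacle here is bookkeeping rather than anything conceptual: one must be sure that each $\bP$-a.s.\ statement about regular conditional distributions — the monotone CDF comparison, the convolution identity $P^{(W+c)|\sU_t}=P^{W|\sU_t}*\delta_c$, and the essential-supremum bound on $\sigma_{t+1,X_{t+1}}(\cdot)$ — holds outside a $\bP$-null set not depending on $x$, so that the quantifier ``for all $x\in\bX$'' survives when $\sigma_{t,x}$ is applied pointwise in $\omega$; this is the same care already exercised in \cref{lem:finiteTsigma} and \cref{prop:TimeConsistency}.
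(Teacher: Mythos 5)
Your proposal is correct and follows essentially the same route as the paper's proof: the uniform bound $M=(1-\gamma)^{-1}\|\fZ\|_\infty$ from \cref{lem:finiteTsigma} plus normalization, a backward induction that pushes a geometric contraction of tail differences through \cref{def:DCRM}(i)--(ii) via the regular-conditional-distribution comparison of \cref{prop:TimeConsistency} (with the exceptional null set uniform in $x$), and a measurable modification on the $\sU_t$-measurable convergence set. The only cosmetic differences are that you telescope consecutive differences $T\to T+1$ and sum the geometric series where the paper bounds $|\varsigma_{t,x,T+r}-\varsigma_{t,x,T}|$ directly, and you build the measurable version via a truncated $\limsup$ where the paper multiplies by the indicator of the convergence set.
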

\begin{proof}
By \cref{lem:finiteTsigma} and that $(\sigma_{t,x})_{t\in\bN}$ is normalized, $\|W_T^{T+r}\|_\infty\le (1-\gamma)^{-1}\|\fZ\|_\infty$ for any $(T,r)\in\bN^2$. By \cref{def:DCRM} (i) (ii), we have $\|\varsigma^{\fX,\gamma}_{T,X_{T},T+r}(\fZ)\|_\infty\le (1-\gamma)^{-1}\|\fZ\|_\infty$.  
Consequently,
\begin{align*}
 Z_{T-1} - \frac{\gamma}{1-\gamma}\|\fZ\|_\infty \le Z_{T-1}+\gamma\;\varsigma^{\fX,\gamma}_{T,X_{T},T+r}(\fZ) \le Z_{T-1} + \frac{\gamma}{1-\gamma}\,\|\fZ\|_\infty, \quad\bP\text{-a.s.}
\end{align*}
By \eqref{eq:DefsigmatT} and \cref{def:DCRM} (i) (ii), with a similar reasoning as in the proof of \cref{prop:TimeConsistency}, we obtain
\begin{align*}
\bP\left(\varsigma^{\fX,\gamma}_{T-1,x,T}(\fZ) - \frac{\gamma}{1-\gamma}\|\fZ\|_\infty \le \varsigma^{\fX,\gamma}_{T-1,x,T+r}(\fZ) \le \varsigma^{\fX,\gamma}_{T-1,x,T}(\fZ) + \frac{\gamma}{1-\gamma}\|\fZ\|_\infty,\,x\in\bX\right) = 1.
\end{align*}
We next proceed by backward induction. Suppose for some $t<T$, it is true for $r\in\bN$ that 
\begin{align*}
\bP\left(\varsigma^{\fX,\gamma}_{t,x,T}(\fZ)-\frac{\gamma^{T-t}}{1-\gamma}\|\fZ\|_\infty \le \varsigma^{\fX,\gamma}_{t,x,T+r}(\fZ) \le \varsigma^{\fX,\gamma}_{t,x,T}(\fZ) + \frac{\gamma^{T-t}}{1-\gamma}\|\fZ\|_\infty,\,x\in\bX\right) = 1\,.
\end{align*}
Note that 
\begin{align*}
W_{t-1}^{T+r} &= Z_{t-1} + \gamma \,\varsigma^{\fX,\gamma}_{t,x,T+r}(\fZ) 
\\
&\le Z_{t-1} + \gamma \,\varsigma^{\fX,\gamma}_{t,x,T}(\fZ) + 
\frac{\gamma^{T-(t-1)}}{1-\gamma}\,
\|\fZ\|_\infty = W_{t-1}^T + 
\frac{\gamma^{T-(t-1)}}{1-\gamma}\,\|\fZ\|_\infty,\quad\bP\text{-a.s.}
\end{align*}
Similarly, we have $W_{t-1}^{T+r}\ge W_{t-1}^T - \gamma^{T-(t-1)}(1-\gamma)^{-1}\|\fZ\|_\infty,\,\bP$-a.s. Handling the conditional distributions of $W_{t-1}^{T+r}$ and $W_{t-1}^{T}$ given $\sU_{t-1}$ with the same technique from the proof of \cref{prop:TimeConsistency}, by \cref{def:DCRM} (i) (ii), we obatin
\begin{align*}
\bP\left(\varsigma^{\fX,\gamma}_{t-1,x,T}(\fZ)-\frac{\gamma^{T-(t-1)}}{1-\gamma}\|\fZ\|_\infty \le \varsigma^{\fX,\gamma}_{t-1,x,T+r}(\fZ) \le \varsigma^{\fX,\gamma}_{t-1,x,T}(\fZ) + \frac{\gamma^{T-(t-1)}}{1-\gamma}\|\fZ\|_\infty,\,x\in\bX\right)=1.
\end{align*}
Therefore, for any $(t,T,r)\in\bN^3$ with $t\le T$, we have
\begin{align*}
\bP\left(\left|\varsigma^{\fX,\gamma}_{t,x,T+r}(\fZ)-\varsigma^{\fX,\gamma}_{t,x,T}(\fZ)\right| \le \frac{\gamma^{T-t}}{1-\gamma}\,\|\fZ\|_\infty,\,x\in\bX \right)=1,
\end{align*} 
and thus
\begin{align}\label{eq:sigmaConv}
&\bP\left(\big(\varsigma^{\fX,\gamma}_{t,x,T}(\fZ)\big)_{T\in\bN}\text{ converges for any } x\in\bX\right)\nonumber \\
&\quad\ge \bP\left(\bigcap_{T\ge t}\bigcap_{r\in\bN}\left\{\left|\varsigma^{\fX,\gamma}_{t,x,T+r}(\fZ)-\varsigma^{\fX,\gamma}_{t,x,T}(\fZ)\right| \le \frac{\gamma^{T-t}}{1-\gamma}\|\fZ\|_\infty,\,x\in\bX\right\} \right)=1.
\end{align}
In view of \cref{lem:finiteTsigma}, with similar reasoning as before, we have 
\begin{align*}
\bP\left(\bigcap_{T\ge t}\left\{\left|\varsigma^{\fX,\gamma}_{t,x,T}(\fZ)\right| \le \frac{1}{1-\gamma}\,\|\fZ\|_\infty,\,x\in\bX\right\} \right)=1,
\end{align*}
which together with \eqref{eq:sigmaConv} implies the boundedness. 
Regarding the measurability, by \eqref{eq:sigmaConv} and the setting that $\sF_1$ contains all the $\bP$-neglegible sets, we have 
\begin{align*}
\cN:=\left\{\omega\in\bX\times\Omega: \big(\varsigma^{\fX,\gamma}_{t,x,T}(\fZ)\big)_{T\in\bN}\text{ converges for any } x\in\bX\right\}^c \in \sU_t.
\end{align*}
We let $\overline\varsigma^{\fX,\gamma}_{t,x,T}(\fZ) := \1_{\cN^c}\;\varsigma^{\fX,\gamma}_{t,x,T}(\fZ)$ and define $\overline\varsigma^{\fX,\gamma}_{t,x,\infty}=\lim_{T\to\infty}\overline\varsigma^{\fX,\gamma}_{t,x,T}(\fZ)$. Then, $\bP\big(\varsigma^{\fX,\gamma}_{t,x,\infty}(\fZ)=\overline\varsigma^{\fX,\gamma}_{t,x,\infty}(\fZ),\,x\in\bX\big)=1$. Note the convergence in the definition of $\overline\varsigma^{\fX,\gamma}_{t,x,\infty}$ is pointwise in $(x,\omega)\in\bX\times\Omega$. This together with \cref{lem:finiteTsigma} and the fact that pointwise convergence preserves measurability (cf. \cite[Section 4.6, Lemma 4.29]{Aliprantis2006book}) implies that $\overline\varsigma^{\fX,\gamma}_{t,x,\infty}$ is $\cB(\bX)\otimes\sU_t$-$\cB(\bR)$ measurable. Finally, the statement for $\varsigma^{\fX,\gamma}_{0,\infty}(\fZ)$ follows analogously.
\end{proof}

To conclude this section, we demonstrate that $(\varsigma_{t,x,\infty}^{\fX,\gamma})_{t\in\bN_0}$ adheres to a similar recursive relationship and exhibits the same time consistency as its finite horizon counterpart.
\begin{proposition}
Consider $\fZ=(Z_n)_{n\in\bN_0}, \fZ'=(Z'_n)_{n\in\bN_0}\subset L^\infty(\Omega,\sH,\bP)$ with $\|\fZ\|_\infty,\|\fZ'\|_\infty<\infty$. Then, 
\begin{align}\label{eq:sigmatInftyRecursive}
\bP\left(\varsigma_{t,x,\infty}^{\fX,\gamma}(\fZ) = \sigma_{t,x}(P^{W^\infty_t|\sU_t}),\,x\in\bX\right)=1,
\end{align}
where $W^\infty_t = Z_t + \gamma \varsigma_{t+1,X_{t+1},\infty}^{\fX,\gamma}(\fZ)$. Moreover, let $s\in\bN_0$. If $\bP(Z_r=Z_r')=1$ for $r=s,\dots,t$ and $\bP\big(\varsigma_{t+1,x,\infty}^{\fX,\gamma}(\fZ) \le \varsigma_{t+1,x,\infty}^{\fX,\gamma}(\fZ'),\,x\in\bX\big)=1$,  then $\bP\big(\varsigma_{s,x,\infty}^{\fX,\gamma}(\fZ) \le \varsigma_{s,x,\infty}^{\fX,\gamma}(\fZ'),\,x\in\bX\big)=1$. 
\end{proposition}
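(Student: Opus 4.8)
The plan is to establish the recursion \eqref{eq:sigmatInftyRecursive} first, and then obtain the infinite-horizon time consistency by the same downward induction already used for \cref{prop:TimeConsistency}.

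\emph{Step 1: the recursion.} For each finite $T>t$, the definition \eqref{eq:DefsigmatT} already gives $\varsigma_{t,x,T}^{\fX,\gamma}(\fZ)=\sigma_{t,x}(P^{W_t^T|\sU_t})$ with $W_t^T=Z_t+\gamma\,\varsigma_{t+1,X_{t+1},T}^{\fX,\gamma}(\fZ)$. Letting $r\to\infty$ in the estimate of \cref{lem:sigmatTConv} (read at time $t+1$, horizon $T$) yields, $\bP$-a.s., $\big|\varsigma_{t+1,x,\infty}^{\fX,\gamma}(\fZ)-\varsigma_{t+1,x,T}^{\fX,\gamma}(\fZ)\big|\le \gamma^{T-(t+1)}(1-\gamma)^{-1}\|\fZ\|_\infty$ for all $x\in\bX$, and plugging the a.s.\ admissible value $x=X_{t+1}$ gives $|W_t^T-W_t^\infty|\le\epsilon_T:=\gamma^{T-t}(1-\gamma)^{-1}\|\fZ\|_\infty$, $\bP$-a.s. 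I would note that $W_t^\infty$ is a bounded $\sH$-measurable random variable (bounded by $(1-\gamma)^{-1}\|\fZ\|_\infty$ via \cref{lem:sigmatTConv}), so $P^{W_t^\infty|\sU_t}$ exists, $\sigma_{t,x}(P^{W_t^\infty|\sU_t})$ is finite, and $(x,\omega)\mapsto\sigma_{t,x}(P^{W_t^\infty|\sU_t}(\omega,\cdot))$ is $\cB(\bX)\otimes\sU_t$-$\cB(\bR)$ measurable, exactly as argued in the proof of \cref{lem:finiteTsigma}. Then, repeating the regular-conditional-distribution device from the proof of \cref{prop:TimeConsistency} on the pathwise sandwich $W_t^\infty-\epsilon_T\le W_t^T\le W_t^\infty+\epsilon_T$, one obtains that $\bP$-a.s.\ the conditional CDFs satisfy
\begin{align*}
P^{W_t^\infty|\sU_t}((-\infty,r-\epsilon_T])\le P^{W_t^T|\sU_t}((-\infty,r])\le P^{W_t^\infty|\sU_t}((-\infty,r+\epsilon_T]),\quad r\in\bR,
\end{align*}
and \cref{def:DCRM} (i)--(ii) then gives $\bP\big(\,\big|\varsigma_{t,x,T}^{\fX,\gamma}(\fZ)-\sigma_{t,x}(P^{W_t^\infty|\sU_t})\big|\le\epsilon_T,\,x\in\bX\big)=1$. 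Letting $T\to\infty$ and using \eqref{eq:DefsigmatInfty} proves \eqref{eq:sigmatInftyRecursive}; the same argument with $\cL(\,\cdot\,)$ and $\sigma_0$ in place of $P^{\,\cdot\,|\sU_0}$ and $\sigma_{t,x}$ gives $\varsigma_{0,\infty}^{\fX,\gamma}(\fZ)=\sigma_0\big(\cL(Z_0+\gamma\,\varsigma_{1,X_1,\infty}^{\fX,\gamma}(\fZ))\big)$.

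\emph{Step 2: time consistency.} As in \cref{prop:TimeConsistency}, I would reduce to the one-step claim (passing from level $t+1$ to level $t$) and iterate downward to level $s$, using $\bP(Z_r=Z_r')=1$, $r=s,\dots,t$, at each step; the last step to level $0$ when $s=0$ uses the level-$0$ recursion from Step~1. For the one-step claim, \eqref{eq:sigmatInftyRecursive} gives $\varsigma_{t,x,\infty}^{\fX,\gamma}(\fZ)=\sigma_{t,x}(P^{W_t^\infty|\sU_t})$ and $\varsigma_{t,x,\infty}^{\fX,\gamma}(\fZ')=\sigma_{t,x}(P^{W_t'^\infty|\sU_t})$ $\bP$-a.s.\ for all $x$, with $W_t^\infty=Z_t+\gamma\,\varsigma_{t+1,X_{t+1},\infty}^{\fX,\gamma}(\fZ)$ and $W_t'^\infty=Z_t'+\gamma\,\varsigma_{t+1,X_{t+1},\infty}^{\fX,\gamma}(\fZ')$. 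Since $Z_t=Z_t'$ a.s.\ and (substituting $x=X_{t+1}$) $\varsigma_{t+1,X_{t+1},\infty}^{\fX,\gamma}(\fZ)\le\varsigma_{t+1,X_{t+1},\infty}^{\fX,\gamma}(\fZ')$ a.s., we have $W_t^\infty\le W_t'^\infty$ a.s.; handling the regular conditional distribution of $(W_t^\infty,W_t'^\infty)$ given $\sU_t$ as in \cref{prop:TimeConsistency} and invoking \cref{def:DCRM} (ii) gives $\bP(\varsigma_{t,x,\infty}^{\fX,\gamma}(\fZ)\le\varsigma_{t,x,\infty}^{\fX,\gamma}(\fZ'),\,x\in\bX)=1$.

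The main obstacle, just as in \cref{prop:TimeConsistency}, is the careful handling of regular conditional distributions: upgrading a pathwise inequality between random variables to an inequality between their conditional CDFs valid $\bP$-a.s.\ \emph{simultaneously over all} $r\in\bR$, and then applying $\sigma_{t,x}$ $\bP$-a.s.\ \emph{simultaneously over all} $x\in\bX$. This is legitimate because regular conditional distributions are $\bP$-a.s.\ unique and $\sF_1\subseteq\sU_t$ contains every $\bP$-negligible set, so both quantifiers can be pulled inside $\bP(\,\cdot\,)$; once this is granted, everything reduces to translation invariance and monotonicity (\cref{def:DCRM} (i)--(ii)) together with the convergence already proved in \cref{lem:sigmatTConv}.
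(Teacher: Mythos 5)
Your proposal is correct and follows essentially the same route as the paper: it uses the uniform convergence rate from \cref{lem:sigmatTConv} to sandwich $W_t^T$ between $W_t^\infty\pm\gamma^{T-t}(1-\gamma)^{-1}\|\fZ\|_\infty$, upgrades this to an inequality of conditional CDFs via the joint regular-conditional-distribution device from \cref{prop:TimeConsistency}, and applies \cref{def:DCRM} (i)--(ii) before letting $T\to\infty$; the time-consistency part is the same one-step-and-iterate argument the paper refers back to and omits. Your write-up is, if anything, slightly more explicit than the paper's (measurability of $\sigma_{t,x}(P^{W_t^\infty|\sU_t})$, the level-$0$ case, and the spelled-out one-step monotonicity argument).
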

\begin{proof}
By \eqref{eq:sigmaConv} and the observation that 
\begin{align*}
\bigcap_{T\ge t}\bigcap_{r\in\bN} \left\{\left|\varsigma^{\fX,\gamma}_{t,x,T+r}(\fZ)-\varsigma^{\fX,\gamma}_{t,x,T}(\fZ)\right| \le \frac{\gamma^{T-t}}{1-\gamma}\|\fZ\|_\infty,\,x\in\bX\right\}\\
\subseteq \bigcap_{T\ge t} \left\{\left|\varsigma^{\fX,\gamma}_{t,x,\infty}(\fZ)-\varsigma^{\fX,\gamma}_{t,x,T}(\fZ)\right| \le \frac{\gamma^{T-t}}{1-\gamma}\|\fZ\|_\infty,\,x\in\bX\right\},
\end{align*}
we have 
\begin{gather}
\bP\left(\left|\varsigma^{\fX,\gamma}_{t,x,\infty}(\fZ)-\varsigma^{\fX,\gamma}_{t,x,T}(\fZ)\right| \le \frac{\gamma^{T-t}}{1-\gamma}\|\fZ\|_\infty,\,x\in\bX\right) = 1,\label{eq:sigmatTConvRate} \\
\left| W^\infty_t - \left( Z_t + \gamma \varsigma_{t+1,X_{t+1},T}^{\fX,\gamma}(\fZ) \right) \right| \le \frac{\gamma^{T-(t+1)}}{1-\gamma}\|\fZ\|_\infty,\quad \bP\text{-a.s.} \label{eq:WtInftyRange}
\end{gather}
By combining \cref{def:DCRM} (i) (ii), \eqref{eq:DefsigmatT} and \eqref{eq:WtInftyRange}, with a similar reasoning as in the proof of \cref{prop:TimeConsistency}, we obtain
\begin{align*}
\bP\left( \left| \sigma_{t,x}(P^{W^\infty_t|\sU_t}) - \varsigma_{t,x,T}^{\fX,\gamma}(\fZ) \right| \le \frac{\gamma^{T-(t+1)}}{1-\gamma}\|\fZ\|_\infty,\, x\in\bX \right) = 1.
\end{align*}
This together with \cref{eq:sigmatTConvRate} implies that for any $t<T$,
\begin{align*}
\bP\left( \left| \sigma_{t,x}(P^{W^\infty_t|\sU_t}) - \varsigma_{t,x,\infty}^{\fX,\gamma}(\fZ) \right| \le \frac{(1+\gamma)\gamma^{T-(t+1)}}{1-\gamma}\|\fZ\|_\infty,\, x\in\bX \right) = 1,
\end{align*}
which proves \eqref{eq:sigmatInftyRecursive}. The proof for the remaining statement on time consistency can be directly derived from the arguments used in \cref{prop:TimeConsistency}, and is therefore omitted here.
\end{proof}

\subsection{Problem formulation}\label{subsec:Problem}
Here, we provide some standing assumptions and remarks on the key problem that we address: how can the DRM constructed in \cref{subsec:DDRM} be optimised over actions?

We let $\Psi$ be a subset of $(\fX,\fA)$ satisfying \eqref{eq:GMarkov} and $\bP(A_t\in\cA_t(X_t),\,t\in\bN)=1$ (note that by \cite[Section 18.1, Theorem 18.6]{Aliprantis2006book}, $\set{(x,a)\in\bX\times\bA:a\in\cA_t(x)}\in\cB(\bX)\otimes\cB(\bA)$). Throughout the rest of the paper, we make the following standing assumption.\footnote{For a nontrivial example, one can set $\bX=\bA=\bR$ and construct $(\fX,\fA)$ from any given $P$ and $\fp$ on a complete probability space that supports a countable family of mutually independent $U([0,1])$ random variables. The construction can be done by quantile transform (cf. \cite[Theorem 1]{OBrien1975Comparison}). For examples of $\bX$ and $\bA$ that are complete separable metric spaces, we refer to \cite{Blackwell1983Extension}.}
\begin{assumption}\label{asmp:PlaceHolder}
The family $\set{(\Omega,\sH,\bP),\bF,\bG,\Psi}$ satisfies the conditions below:
\begin{itemize}
\item[(i)] There exist a probability measure on $\cB(\bX)$, denoted by $\mu$, and a transition kernel on $\cB(\bX)$, denoted by $P$, such that for any $(\fX,\fA)\in\Psi$, we have $X_1\sim\mu$ and \eqref{eq:transkernel} holds true for any $t\in\bN$. Above, $P$ should satisfy
\begin{itemize}
\item $P(t,x,a,\cdot)$ is a probability measure on $\cB(\bX)$ for any $(t,x,a)\in\bN\times\bX\times\bA$;
\item $(x,a)\mapsto P(t,x,a,B)$ is $\cB(\bX)\otimes\cB(\bA)$-measurable for any $t\in\bN$ and $B\in\cB(\bX)$.
\end{itemize}
\item[(ii)] For any Markovian policy $\fp=(\pi_t)_{t\in\bN}\in\Pi$, there is $(\fX,\fA)\in\Psi$ such that \eqref{eq:MarkovControlkernel} holds true for all $t\in\bN$.
\end{itemize}
\end{assumption}
When $\fA$ is associated with some Markovian policy $\fp\in\Pi$ via \eqref{eq:MarkovControlkernel}, we  write $(\fX^{\fp}, \fA^{\fp}) = \set{(X^{\fp}_t, A^{\fp}_t)}_{t\in\bN}$ to emphasize the association with $\fp$.

At each $t\in\bN$, we are given a cost function $C_t:(\bX\times\bA\times\bX,\cB(\bX)\otimes\cB(\bA)\otimes\cB(\bX))\to(\bR,\cB(\bR))$ and we force $C_0\equiv 0$. Let us define $\fC:=(C_t(X_t,A_t,X_{t+1}))_{n\in\bN_0}$.

Below is our main goal. 
\begin{align}\label{eq:main}\tag{P}
\text{For $T\in\bN\cup\set{\infty}$,\,\, find \,\, $\inf_{(\fX,\fA)\in\Psi}\varsigma^{\fX,\gamma}_{0,T}(\fC)$, \,\, and the optimal policy if exists.}
\end{align}
For $T\in\bN$, it is a finite horizon problem. For $T=\infty$, it is an infinite horizon problem and, as seen in \cref{subsec:DDRM}, the definition of $\varsigma^{\fX,\gamma}_{0,\infty}$ is based on the assumptions that $\gamma\in[0,1)$ and $(\sigma_{t,x})_{(t,x)\in\bN\times\bX}$ is normalized. 

\begin{remark}
At times, the choice of discount factor in an infinite horizon can be controversial. One approach to address this is to consider the existence of a Markovian policy that is optimal for all $\gamma\in(\delta, 1)$ for some $\delta\in(0,1)$, which is known as Blackwell optimality (cf. \cite{Blackwell1962Discrete}). In a risk-neutral setting with discrete state and action spaces, Blackwell optimality does exist. The proof hinges on that the value function uniquely solves an affine equation; see also \cite[Seciont 10.1]{Puterman2005Markov}, \cite[Section 3.1]{Grand-Clement2023Reducing}. However, in a risk-averse setting, where the affine property is often absent, Blackwell optimality can be elusive. Another potentially promising approach to address uncertainty in the discount factor $\gamma$ involves considering the limit of $\varsigma^{\fX,\gamma}_{0,\infty}(\fC(\fX,\fA))$ as $\gamma \to 1$. In the risk-neutral setting with discrete state and action spaces, it has been shown that this limit while scaled properly coincides with average optimality (cf. \cite[Corollary 8.2.5]{Puterman2005Markov}, \cite{Dewanto2022Examining}). However, the equivalence of this limit in current context remains unclear.
\end{remark}

In what follows, we fix $\gamma\in[0,1]$ for finite horizon problems and $\gamma\in[0,1)$ for infinite horizon problems. The variable $\gamma$ will subsequently be omitted from the notation $\varsigma^{\fX,\gamma}_{t,T}$.

In addition to  \cref{asmp:PlaceHolder}, we invoke further technical assumptions for the derivation of the DPP to be rigorous.
\begin{assumption}\label{asmp:Main} The following is true for any $t\in\bN$:
\begin{itemize}
\item[(i)] $(x,a)\mapsto P(t,x,a,\cdot)$ is weakly continuous, that is, for any $(x^n)_{n\in\bN}\subseteq\bX$ and $(a^n)_{n\in\bN}\subseteq\bA$ such that $\lim_{n\to\infty}x^n=x^0$ and $\lim_{n\to\infty}a^n=a^0$, we have
\begin{align*}
\lim_{n\to\infty}\int_{\bR}f(y)\,P(t,x^n,a^n,\dif y) = \int_{\bR}f(y)\,P(t,x,a,\dif y),\quad f\in C_b(\bX). 
\end{align*}

\item[(ii)] 
$\bigcup_{x\in\bX}\cA_t(x)$ is compact for $x\in\bX$ and $\cA_t$ is upper hemi-continuous, that is, at any $x\in\bX$ for every open $U_\bA\supseteq\cA_t(x)$ there is a open $U_\bX\ni x$ such that $z\in U_\bX$ implies $\cA_t(z)\subseteq U_\bA$.

\item[(iii)] For any compact $K\subset\bR$, $(\mu^{n})_{n\in\bN}\subset\cP_b(K)$ converging weakly to $\mu^0\in\cP(K)$ and $(x^n)_{n\in\bN}\subseteq\bX$ that converges to $x^0\in\bX$, we have $\liminf_{n\to\infty}\sigma_{t,x^n}(\mu^n)\ge \sigma_{t,x^0}(\mu^0)$.

\item[(iv)] The cost function $C_t$ is lower semi-continuous and $\|C_t\|_\infty \le b$ for some $b>0$.
\end{itemize}
\end{assumption}
Note Assumption \ref{asmp:Main} (ii) implies that $\bigcup_{x\in\bX}\varpi_t(x)$ is compact (cf. \cite[Section 15.6, Theorem 15.22]{Aliprantis2006book}) and $\varpi_t$ is upper hemi-continuous (cf. \cite[Section 17.2, Theorem 17.13]{Aliprantis2006book}). 

\begin{remark}
The preceding assumptions do not lead to any contradiction. Indeed, several earlier assumptions can be derived from \cref{asmp:Main}. Specifically, by \cref{asmp:Main} (i), $(x,a)\mapsto P(t,x,a,\,\cdot\,)$ is $\cB(\bX)\otimes\cB(\bA)$-$\cB(\Xi)$ measurable. This along with \cref{lem:sigmaAlgBE} ensures that any probability measure $P$ that satisfies \cref{asmp:Main} (i) also meets the conditions on $P$ as stated in \cref{asmp:PlaceHolder} (i). In our current framework where the input space $\bX$ is equipped with $\cB(\bX)$, the upper hemi-continuity in \cref{asmp:Main} (ii) implies weak measurability, as delineated in \cite[Section 17.2, Lemma 17.4 and Section 18.1, Lemma 18.2]{Aliprantis2006book}. Finally, \cref{asmp:Main} (iii) implies the measurability in \cref{assump:mbl} as $\set{(x,\mu)\in\bX\times\cP_K(\mu):\sigma_{t,x}(\mu)\ge c}$ is closed for any $c\in\overline\bR$.
\end{remark}

\section{Auxiliaries}\label{sec:Aux}

Temporarily, let us restrict our attention to Markovian policies $\fp$ and investigate the finite horizon $\varsigma_{t,x,T}^{\fX^\fp}({\fC^\fp})$ 
introduced in \eqref{eq:DefsigmatT}, where $\fC^\fp:=\big(C_t(X_t^\fp,A_t^\fp,X_{t+1}^\fp)\big)_{t\in\bN_0}$. The Markovian nature of the controlled process provides a way to express $\varsigma^{\fX^\fp}_{t,x,T}({\fC^\fp})$ as composition of operators, which in turn facilitates the derivation of a DPP. The various operators involved are introduced below. 

For $t=0$ we define a functional $H_0:\ell^\infty(\bX,\cB(\bX))\to\bR$ as
\begin{align}\label{eq:DefH0}
H_0 v &:= \sigma_0\left(\wt P^{v}_{0}\right), 
\end{align}
where $\wt P^{v}_{0}$ is the distribution of $\gamma\,v(Y)$ and $Y\sim\xi_1$. 
For $t\in\bN$ and $\lambda\in\Lambda$, we define an operator $G^{\lambda}_t:\ell^\infty(\bX,\cB(\bX))\to\ell^\infty(\bX,\cB(\bX))$ as 
\begin{align}\label{eq:DefG}
&G^{\lambda}_t v(x) := \sigma_{t,x}\left(\wt P^v_{t,x,\lambda}\right),
\end{align}
where $\wt P^{v}_{t,x,\lambda}$ is the distribution of $C_t(x,A,Y)+\gamma\, v(Y)$, and $A\sim \lambda$ and $Y\sim \int_{\bA}P(t,x,a,\cdot)\lambda(\dif a)$. More precisely, 
\begin{align}\label{eq:DefPtilde}
\wt P^{v}_{t,x,\lambda}(B) = \int_{\bA}\int_{\bX}\1_{B}\big(C_t(x,a,y)+\gamma v(y)\big)
\;P(t,x,a,\dif y)\;\lambda(\dif a), 
\quad B\in\cB(\bR).
\end{align}
Although $H_0$ and $G^\lambda_t$ depend on $\gamma$ through \eqref{eq:DefPtilde}, as $\gamma$ is fixed we omit $\gamma$ from the notation of $H_0$ and $G^\lambda_t$. Furthermore, for $t\in\bN$ and $\fp\in\Pi$, we define an operator $H^\fp_t:\ell^\infty(\bX,\cB(\bX))\to\ell^\infty(\bX,\cB(\bX))$ as 
\begin{align}\label{eq:DefH}
H^{\fp}_t v(x) := G^{\pi_t(x)}_t v(x).
\end{align}

We first delve into the fundamental properties of the operators introduced previously. \cref{lem:GBasic} and \cref{lem:GMeasurability} collectively confirm the range of $G_t^\lambda$ and $H^\fp_t$, as specified in their respective definitions.
\begin{lemma}\label{lem:GBasic}
The following is true for any $(t,\lambda)\in\bN\times\Lambda$:
\begin{enumerate}
\item[(a)] Suppose \cref{asmp:Main} (iv) holds. If $v^1,v^2\in\ell^\infty(\bX,\cB(\bX))$ and $v^1\le v^2$, then $G^{\lambda}_t v^1 \le G^{\lambda}_t v^2$. Moreover, if $\sigma_{t,x}$ is normalized for $x\in\bX$, then $G^{\lambda}_t v(x) \in \bigl[\,-b-\gamma\|v\|_\infty,\; b+\gamma\|v\|_\infty\,\bigr]$.
\item[(b)] For any $a\in\bR$ and $v\in\ell^\infty(\bX,\cB(\bX))$, we have $G^{\lambda}_t (a+v) = a + G^{\lambda}_tv$.
\end{enumerate}
Properties for $H_0$ hold analogously.
\end{lemma}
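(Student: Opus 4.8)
\textbf{Proof plan for \cref{lem:GBasic}.}

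The plan is to unwind the definitions of $G^\lambda_t$ in \eqref{eq:DefG}--\eqref{eq:DefPtilde} and reduce every assertion to the defining properties (i) and (ii) of a distributional convex risk measure in \cref{def:DCRM}, using \cref{asmp:Main} (iv) to guarantee that all the distributions in play actually belong to $\cP_b(\bR)$ so that $\sigma_{t,x}$ may be applied. For part (a), the monotonicity claim, I would fix $x\in\bX$ and show that $\wt P^{v^1}_{t,x,\lambda}$ is stochastically dominated by $\wt P^{v^2}_{t,x,\lambda}$, i.e. $\wt P^{v^1}_{t,x,\lambda}((-\infty,r])\ge \wt P^{v^2}_{t,x,\lambda}((-\infty,r])$ for all $r\in\bR$. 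This is immediate from \eqref{eq:DefPtilde}: since $v^1\le v^2$ pointwise, $C_t(x,a,y)+\gamma v^1(y)\le C_t(x,a,y)+\gamma v^2(y)$ for every $(a,y)$ (here $\gamma\ge 0$ is used), hence $\1_{(-\infty,r]}(C_t(x,a,y)+\gamma v^1(y))\ge \1_{(-\infty,r]}(C_t(x,a,y)+\gamma v^2(y))$ and one integrates against $P(t,x,a,\dif y)\,\lambda(\dif a)$. Applying \cref{def:DCRM}(ii) to $\sigma_{t,x}$ then gives $G^\lambda_t v^1(x)\le G^\lambda_t v^2(x)$, and since $x$ was arbitrary the inequality holds as functions.

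For the boundedness claim in (a), assume $\sigma_{t,x}$ is normalized. By \cref{asmp:Main}(iv), $\|C_t\|_\infty\le b$, so $C_t(x,a,y)+\gamma v(y)\in[-b-\gamma\|v\|_\infty,\,b+\gamma\|v\|_\infty]$ for all $(a,y)$, whence $\wt P^v_{t,x,\lambda}$ is supported in that interval; in particular it lies in $\cP_b(\bR)$. Writing $m:=b+\gamma\|v\|_\infty$, we have $\delta_{-m}$ stochastically dominated by $\wt P^v_{t,x,\lambda}$ which is stochastically dominated by $\delta_{m}$, so \cref{def:DCRM}(ii) gives $\sigma_{t,x}(\delta_{-m})\le G^\lambda_t v(x)\le \sigma_{t,x}(\delta_m)$. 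Since $\delta_{\pm m}=\delta_0*\delta_{\pm m}$, \cref{def:DCRM}(i) together with normalization $\sigma_{t,x}(\delta_0)=0$ yields $\sigma_{t,x}(\delta_{\pm m})=\pm m$, giving $G^\lambda_t v(x)\in[-m,m]$ as claimed. This also confirms $G^\lambda_t v\in\ell^\infty(\bX,\cB(\bX))$ (the measurability in $x$ being deferred to \cref{lem:GMeasurability}).

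For part (b), fix $a\in\bR$ and $v\in\ell^\infty(\bX,\cB(\bX))$, and fix $x\in\bX$. From \eqref{eq:DefPtilde} one reads off that $\wt P^{a+v}_{t,x,\lambda}$ is the pushforward of $\wt P^{v}_{t,x,\lambda}$ under the shift $r\mapsto r+\gamma a$, i.e. $\wt P^{a+v}_{t,x,\lambda}=\wt P^{v}_{t,x,\lambda}*\delta_{\gamma a}$; wait --- here I must be careful, since $a$ inside the argument of $v$ contributes $\gamma a$, not $a$, to the shift. Re-reading the statement, the intended claim should be read with the convention that $a$ is added to $v$ as a constant \emph{after} discounting, or equivalently the $\gamma$ in \eqref{eq:DefPtilde} multiplies $v$ but the translation-invariance is stated at the level of the composed operator; in any case, $\wt P^{a+v}_{t,x,\lambda}=\wt P^{v}_{t,x,\lambda}*\delta_{c}$ for the appropriate constant $c$ (equal to $\gamma a$), and then \cref{def:DCRM}(i) gives $\sigma_{t,x}(\wt P^{a+v}_{t,x,\lambda})=\sigma_{t,x}(\wt P^{v}_{t,x,\lambda})+c$, i.e. $G^\lambda_t(a+v)(x)=G^\lambda_t v(x)+c$; taking $\gamma=1$ or absorbing the $\gamma$ into the statement as the authors presumably intend recovers the displayed identity. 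The statement for $H_0$ is proved verbatim, with $\wt P^v_0$ (the law of $\gamma v(Y)$, $Y\sim\xi_1$) in place of $\wt P^v_{t,x,\lambda}$ and $\sigma_0$ in place of $\sigma_{t,x}$; there is no cost term, so the argument is if anything simpler. The only genuine subtlety --- and the step I would flag as requiring care rather than being the ``main obstacle'' --- is the bookkeeping of the factor $\gamma$ in the translation identity and making sure the support-boundedness is invoked before applying $\sigma_{t,x}$, which is only legitimate because of \cref{asmp:Main}(iv).
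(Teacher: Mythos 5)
Your proof is correct and follows the only natural route: the paper's own proof of this lemma is a single line declaring both parts "immediate consequences of \cref{def:DCRM}," and your unwinding — stochastic dominance of $\wt P^{v^1}_{t,x,\lambda}$ over $\wt P^{v^2}_{t,x,\lambda}$ followed by \cref{def:DCRM}~(ii), and the sandwich $\sigma_{t,x}(\delta_{-m})\le G^\lambda_t v(x)\le\sigma_{t,x}(\delta_m)$ with $\sigma_{t,x}(\delta_{\pm m})=\pm m$ from (i) and normalization — is exactly what "immediate" means here. Your hesitation in part (b) is well founded: from \eqref{eq:DefPtilde} the constant $a$ enters as $\gamma a$, so what the definitions actually yield is $G^{\lambda}_t(a+v)=\gamma a+G^{\lambda}_t v$, and the identity as printed is literally correct only when $\gamma=1$; this is a slip in the lemma's statement rather than a gap in your argument, and since part (b) is not invoked elsewhere in the paper the discrepancy is harmless.
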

\begin{proof}
These are immediate consequences of \cref{def:DCRM}.
\end{proof}

\begin{lemma}\label{lem:GMeasurability}
Suppose  \cref{asmp:Main} (i) (iii) (iv) holds. For any $v\in\ell^\infty(\bX,\cB(\bX))$, the mapping $(x,\lambda)\mapsto G^{\lambda}_t v(x)$ is $\cB(\bX)\otimes\cE(\Lambda)$-$\cB(\bR)$ measurable. Consequently, the mapping $x\mapsto H^{\fp}_t v(x)$ is $\cB(\bX)$-$\cB(\bR)$ measurable.
\end{lemma}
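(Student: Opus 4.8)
The plan is to express $G^\lambda_t v(x)$ as a composition of measurable maps built from the kernel $P$, the cost $C_t$, the pushforward operation, and the family $(\sigma_{t,x})$. The crucial observation is that once we exhibit a jointly measurable family of probability measures $\wt P^v_{t,x,\lambda}$ (as a function of $(x,\lambda)$, valued in $\cP(K)$ for a suitable compact $K$), Assumption~\ref{asmp:Main}(iii) — which, as noted in the remark following it, yields the measurability required in Assumption~\ref{assump:mbl} — lets us compose with $\sigma_{t,\cdot}(\cdot)$ and conclude. The measurability of $x\mapsto H^\fp_tv(x)=G^{\pi_t(x)}_tv(x)$ then follows immediately: $\pi_t:(\bX,\cB(\bX))\to(\Lambda,\cE(\Lambda))$ is measurable by definition of $\Pi_t$, $\cE(\Lambda)=\cB(\Lambda)$ by Lemma~\ref{lem:sigmaAlgBE}, so $x\mapsto(x,\pi_t(x))$ is $\cB(\bX)$-$\cB(\bX)\otimes\cE(\Lambda)$ measurable, and we precompose with the jointly measurable $(x,\lambda)\mapsto G^\lambda_tv(x)$.

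First I would fix $v\in\ell^\infty(\bX,\cB(\bX))$ and note that by Assumption~\ref{asmp:Main}(iv), $\|C_t\|_\infty\le b$, hence the random variable $C_t(x,a,y)+\gamma v(y)$ takes values in the compact set $K:=[-b-\gamma\|v\|_\infty,\,b+\gamma\|v\|_\infty]$ for all $(x,a,y)$; thus $\wt P^v_{t,x,\lambda}\in\cP(K)$ for every $(x,\lambda)$. Next I would verify that $(x,\lambda)\mapsto\wt P^v_{t,x,\lambda}$ is $\cB(\bX)\otimes\cE(\Lambda)$-$\cE(\cP(K))$ measurable. Since $\cE(\cP(K))=\cB(\cP(K))$ (Lemma~\ref{lem:sigmaAlgBE}), it suffices to check that for each $B\in\cB(\bR)$ the map $(x,\lambda)\mapsto\wt P^v_{t,x,\lambda}(B)$ is measurable, where
\begin{align*}
\wt P^v_{t,x,\lambda}(B)=\int_\bA\int_\bX \1_B\big(C_t(x,a,y)+\gamma v(y)\big)\,P(t,x,a,\dif y)\,\lambda(\dif a).
\end{align*}
The inner integral $(x,a)\mapsto\int_\bX\1_B(C_t(x,a,y)+\gamma v(y))\,P(t,x,a,\dif y)$ is $\cB(\bX)\otimes\cB(\bA)$-measurable: the integrand $(x,a,y)\mapsto\1_B(C_t(x,a,y)+\gamma v(y))$ is jointly measurable because $C_t$ and $v$ are measurable, and $(x,a)\mapsto P(t,x,a,\cdot)$ is a measurable kernel by Assumption~\ref{asmp:Main}(i) (equivalently Assumption~\ref{asmp:PlaceHolder}(i)), so measurability of the integral in $(x,a)$ follows from the standard argument (Dynkin/monotone-class reduction to indicators of measurable rectangles in $\cB(\bX)\otimes\cB(\bA)\otimes\cB(\bX)$, then a monotone class argument). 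Then the outer integration against $\lambda$ preserves joint measurability in $(x,\lambda)$ by the same reasoning applied to the measurable kernel $\lambda\mapsto\lambda(\cdot)$ from $(\Lambda,\cE(\Lambda))$.

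Finally I would compose: $(x,\lambda)\mapsto(x,\wt P^v_{t,x,\lambda})$ is $\cB(\bX)\otimes\cE(\Lambda)$-$\cB(\bX)\otimes\cB(\cP(K))$ measurable by the previous step, and $(x,\mu)\mapsto\sigma_{t,x}(\mu)$ restricted to $\bX\times\cP(K)$ is $\cB(\bX)\otimes\cB(\cP(K))$-$\cB(\overline\bR)$ measurable by Assumption~\ref{assump:mbl} (which holds, as observed in the remark, under Assumption~\ref{asmp:Main}(iii)). Composing gives that $(x,\lambda)\mapsto G^\lambda_tv(x)=\sigma_{t,x}(\wt P^v_{t,x,\lambda})$ is $\cB(\bX)\otimes\cE(\Lambda)$-$\cB(\overline\bR)$ measurable, and it is in fact $\bR$-valued (indeed bounded, by Lemma~\ref{lem:GBasic}-type bounds or directly since $\sigma_{t,x}$ of a measure supported in $K$ lies in a bounded interval controlled by $b_t$ and $K$), so it is $\cB(\bR)$-measurable. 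The claim for $H^\fp_t$ follows by precomposition with $x\mapsto(x,\pi_t(x))$ as described above.

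\textbf{Main obstacle.} The one genuinely non-routine point is the joint measurability of $(x,\lambda)\mapsto\wt P^v_{t,x,\lambda}$, specifically threading the iterated-kernel integral through two layers (integration in $y$ against $P(t,x,a,\cdot)$, then in $a$ against $\lambda$) while keeping track of measurability in the parameters $(x,\lambda)$ simultaneously; this requires invoking that $P$ is a genuine measurable transition kernel and that evaluation $\lambda\mapsto\lambda(A)$ generates $\cE(\Lambda)$, plus a monotone-class argument to pass from indicators of rectangles to the integrand $\1_B(C_t(x,a,y)+\gamma v(y))$. Everything else is bookkeeping with the identifications $\cE=\cB$ supplied by Lemma~\ref{lem:sigmaAlgBE} and the compact-support reduction that makes Assumption~\ref{assump:mbl}/Assumption~\ref{asmp:Main}(iii) applicable.
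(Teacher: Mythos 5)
Your proposal is correct and follows essentially the same route as the paper: reduce to the compact-support case, establish joint measurability of $(x,\lambda)\mapsto\wt P^{v}_{t,x,\lambda}$ by pushing measurability through the two iterated kernel integrals (the paper packages this as Lemma~\ref{lem:IntfMeasurability}, applied once with $P$ and once with $M(x,\lambda)=\lambda$), identify $\cE=\cB$ via Lemma~\ref{lem:sigmaAlgBE}, and compose with the jointly measurable $(x,\mu)\mapsto\sigma_{t,x}(\mu)$ from Assumption~\ref{assump:mbl}. The claim for $H^{\fp}_t$ by precomposition with $x\mapsto(x,\pi_t(x))$ is also exactly the paper's (implicit) argument.
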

\begin{proof}
Let $c=b+\|v\|_\infty$. Let $B\in\cB([-c,c])$. Next, by \cref{lem:IntfMeasurability} we have
\begin{align*}
(x,a)\mapsto \int_{\bX} \1_{B}\big(C_t(x,a,y)+v(y)\big) \;P(t,x,a,\dif y)
\end{align*}
is $\cB(\bX)\times\cB(\bA)$-$\cB(\bR)$ measurable. By \cref{lem:sigmaAlgBE}  (where we set  $f((x,\lambda),a)=\int_{\bX} \1_{B}\big(C_t(x,a,y)+v(y)\big)\;  P(t,x,a,\dif y)$ and $M(x,\lambda)=\lambda$),
\begin{align*}
(x,\lambda)\mapsto \int_{\bA}\int_{\bX} \1_{B}\big(C_t(x,a,y)+v(y)\big) P(t,x,a,\dif y)\lambda(\dif a)
\end{align*}
is $\cB(\bX)\otimes\cB(\Lambda)$-$\cB(\bR)$ measurable. Thus, $(x,\lambda)\mapsto \wt P^{v}_{t,x,\lambda}$ is $\cB(\bX)\otimes\cB(\Lambda)$-$\cB(\cP([-c,c]))$ measurable. Then, by the monotone class theorem (cf. \cite[Theorem 1.9.3 (ii)]{Bogachev2006book}), $(x,\lambda)\mapsto(x,\wt P^{v}_{t,x,\lambda})$ is $\cB(\bX)\otimes\cB(\Lambda)$-$\cB(\bX)\otimes\cB(\cP([-c,c]))$ measurable. The first claim of the theorem then follows as compositions of measurable functions are measurable (cf. \cite[Section 4.5, Lemma 4.22]{Aliprantis2006book}). The second statement follows analogously.
\end{proof}

Next, we reformulate finite horizon DRMs as compositions of $H^\fp_t$ operators. Let $O(x):=0$ for $x\in\bX$.
\begin{lemma}\label{lem:rhotTH}
Under Assumption \ref{asmp:Main} (iii) (iv), for any $0<t\le T<\infty$,
\begin{align*}
\varsigma^{\fX^\fp}_{t,X_t,T}
\big(\fC^\fp\big)
= H^{\fp}_t \circ\cdots\circ H^{\fp}_T O (X^{\fp}_t), \quad \bP-a.s.,
\end{align*}
and 
\begin{align*}
\varsigma^{\fX^\fp}_{0,T}
\big(\fC^\fp\big) 
= H_0\circ H^{\fp}_1 \circ\cdots\circ H^{\fp}_T O.
\end{align*}
\end{lemma}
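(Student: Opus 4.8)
The plan is to prove the identity by backward induction on $t$, using the recursive structure of $\varsigma^{\fX^\fp}_{t,X_t,T}$ in \eqref{eq:DefsigmatT} and the reformulation of $\sigma_{t,x}$ acting on a regular conditional distribution as the operator $G^{\pi_t(x)}_t$ acting pointwise. Throughout I will exploit that, under a Markovian policy $\fp$, the process $\fX^\fp$ is a Markov chain: this is what allows the conditional distribution given $\sU_t$ to be replaced by a conditional distribution given $\sigma(X^\fp_t)$ only, and then by a deterministic kernel evaluated at $X^\fp_t$.

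First I would set up the base case $t=T$. By \eqref{eq:DefsigmatT}, $\varsigma^{\fX^\fp}_{T,X_T,T}(\fC(\fX^\fp,\fA^\fp)) = \sigma_{T,x}\big(P^{C_T(X_T,A_T,X_{T+1})|\sU_T}\big)$. The key computation here is that, because $\fA^\fp$ is Markovian (so $A_T\sim\pi_T(X_T)$ conditionally on $\sU_T$), because $X_{T+1}$ has transition kernel $P(T,X_T,A_T,\cdot)$ by \eqref{eq:transkernel}, and because $C_T$ is deterministic, the regular conditional distribution $P^{C_T(X_T,A_T,X_{T+1})|\sU_T}$ equals $\wt P^{O}_{T,X_T,\pi_T(X_T)}$, $\bP$-a.s., where $\wt P^{O}_{t,x,\lambda}$ is as in \eqref{eq:DefPtilde} with $v=O$ and $\gamma v(y)=0$. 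Hence $\varsigma^{\fX^\fp}_{T,X_T,T} = \sigma_{T,X_T}(\wt P^{O}_{T,X_T,\pi_T(X_T)}) = G^{\pi_T(X_T)}_T O(X_T) = H^\fp_T O(X_T)$, $\bP$-a.s. For the inductive step, suppose $\varsigma^{\fX^\fp}_{t+1,X_{t+1},T} = H^\fp_{t+1}\circ\cdots\circ H^\fp_T O(X_{t+1})$, $\bP$-a.s.; write $v := H^\fp_{t+1}\circ\cdots\circ H^\fp_T O \in \ell^\infty(\bX,\cB(\bX))$, which is measurable by \cref{lem:GMeasurability}. Then $W^T_t = C_t(X_t,A_t,X_{t+1}) + \gamma v(X_{t+1})$, $\bP$-a.s., and the crucial point is to identify the regular conditional distribution $P^{W^T_t|\sU_t}$. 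Again using that $A_t\sim\pi_t(X_t)$ conditionally on $\sU_t$, that $X_{t+1}$ given $(\sigma(X_t)\vee\sigma(A_t))$ has law $P(t,X_t,A_t,\cdot)$, and the Markov property \eqref{eq:GMarkov} to collapse conditioning on $\sU_t$ to conditioning on $\sigma(X_t)\vee\sigma(A_t)$, I get $P^{W^T_t|\sU_t} = \wt P^{v}_{t,X_t,\pi_t(X_t)}$, $\bP$-a.s., so that $\varsigma^{\fX^\fp}_{t,X_t,T} = \sigma_{t,X_t}(\wt P^v_{t,X_t,\pi_t(X_t)}) = G^{\pi_t(X_t)}_t v(X_t) = H^\fp_t v(X_t)$, $\bP$-a.s., completing the induction. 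The $t=0$ statement follows in the same way from the definition $\varsigma^{\fX^\fp}_{0,T}(\fZ) = \sigma_0(\cL(Z_0 + \gamma\varsigma^\fX_{1,X_1,T}(\fZ)))$ with $Z_0 = C_0 \equiv 0$: using the $t=1$ identity and that $X_1\sim\xi_1$, $\cL(\gamma v(X_1)) = \wt P^{v}_0$ with $v = H^\fp_1\circ\cdots\circ H^\fp_T O$, so $\varsigma^{\fX^\fp}_{0,T} = \sigma_0(\wt P^v_0) = H_0 v = H_0\circ H^\fp_1\circ\cdots\circ H^\fp_T O$.

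I expect the main obstacle to be the careful handling of the regular conditional distributions — specifically, justifying rigorously the equality $P^{W^T_t|\sU_t} = \wt P^{v}_{t,X_t,\pi_t(X_t)}$ up to a $\bP$-null set. This requires combining three facts: (i) the conditional law of $A_t$ given $\sU_t$ is $\pi_t(X_t)$ by \eqref{eq:MarkovControlkernel}; (ii) the conditional law of $X_{t+1}$ given $\sigma(X_t)\vee\sigma(A_t)$ is $P(t,X_t,A_t,\cdot)$ by \eqref{eq:transkernel}, together with the Markov property \eqref{eq:GMarkov} to pass from $\sU_t$-conditioning to $(\sigma(X_t)\vee\sigma(A_t))$-conditioning for $X_{t+1}$; and (iii) a tower/disintegration argument showing that the joint conditional law of $(A_t,X_{t+1})$ given $\sU_t$ disintegrates as $\pi_t(X_t)(\dif a)\otimes P(t,X_t,a,\dif y)$, so that pushing forward under $(a,y)\mapsto C_t(X_t,a,y)+\gamma v(y)$ yields exactly \eqref{eq:DefPtilde}. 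The uniqueness of regular conditional distributions up to $\bP$-null sets (invoked as in the proof of \cref{prop:TimeConsistency}) then lets me replace $P^{W^T_t|\sU_t}$ by $\wt P^v_{t,X_t,\pi_t(X_t)}$ inside $\sigma_{t,X_t}$, and \cref{assump:mbl} (or the measurability established in \cref{lem:GMeasurability}) guarantees that all the objects in play are genuinely measurable so the substitution is meaningful. Everything else — boundedness of the $W^T_t$, measurability of the intermediate $v$'s — is supplied by \cref{lem:finiteTsigma}, \cref{lem:GBasic}, and \cref{lem:GMeasurability}, so those steps are routine.
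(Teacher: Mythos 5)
Your proof is correct and follows essentially the same route as the paper: a backward induction in which $\wt P^{v}_{t,X_t,\pi_t(X_t)}$ is identified (up to a $\bP$-null set) as the regular conditional distribution of $W^T_t$ given $\sU_t$, so that \eqref{eq:DefsigmatT} collapses to $H^\fp_t v(X_t)$. The paper simply labels this identification ``straightforward to verify,'' whereas you spell out the disintegration via \eqref{eq:MarkovControlkernel}, \eqref{eq:GMarkov} and \eqref{eq:transkernel}; no gap.
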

\begin{proof}
It is straightforward to verify that $\wt P^O_{T,X^\fp_{T},\pi_T(X^\fp_{T})}$ is a regular conditional distribution of $C_T(X^\fp_{T},A^\fp_T,X^\fp_{T+1})$ given $\sU_T$. By \eqref{eq:DefsigmatT} and \eqref{eq:DefH}, we have
\begin{align*}
\varsigma^{\fX^\fp}_{T,X^\fp_t,T}\big(\fC^\fp\big) 
= \sigma_{T,X^\fp_T}\big(\wt P^O_{T,X^\fp_{T},\pi_T(X^\fp_{T})}\big) = H^{\fp}_T O (X^{\fp}_T), \quad \bP-a.s.
\end{align*}
Note that $\wt P^{ H^{\fp}_T O}_{T,X^\fp_{T-1},\pi_{T-1}(X^\fp_{T-1})}$ is the regular conditional distribution of $C_{T-1}\big(X^\fp_{T-1},A^\fp_{T-1},X^\fp_{T}\big)
\allowbreak
+
\gamma \,\varsigma^{\fX^\fp}_{T,X^\fp_t,T}\big(\fC^\fp\big)$ 
given $\sU_{T-1}$. By \eqref{eq:DefsigmatT} and \eqref{eq:DefH} again, we have
\begin{align*}
\varsigma^{\fX^\fp}_{T-1,X_{T-1}, T}\big(\fC^\fp\big) = \sigma_{T-1,X_{T-1}}\big(\wt P^{ H^{\fp}_T O}_{T-1,X^\fp_{T-1},\pi_{T-1}(X^\fp_{T-1})}\big) = H^{\fp}_{T-1}\circ H^{\fp}_T O (X^{\fp}_{T-1}),\quad\bP-a.s.
\end{align*}
Proceeding by induction backwards, we conclude the proof. 
\end{proof}

Below, we show that the mapping $(x,\lambda)\mapsto G_t^\lambda v(x)$ inherits the lower semi-continuity property from $v$.
\begin{lemma}\label{lem:GvLSC}
Let $(x^n)_{n\in\bN}\subset\bX$ and $(\lambda^n)_{n\in\bN}\subset\Lambda$ converge to $x^0\in\bX$ and $\lambda^0\in\Lambda$, respectively. Under \cref{asmp:Main} (i) (iii) (iv), if $v\in\ell^\infty(\bX,\cB(\bX))$ is lower semi-continuous, then 
\begin{align*}
\liminf_{n\to\infty} G^{\lambda^n}_t v (x^n) \ge G^{\lambda^0}_t v(x^0).
\end{align*}
\end{lemma}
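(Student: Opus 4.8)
The goal is a joint lower-semicontinuity statement for $(x,\lambda)\mapsto G^\lambda_t v(x) = \sigma_{t,x}(\wt P^v_{t,x,\lambda})$, and the natural strategy is to reduce it to \cref{asmp:Main} (iii) (joint lower semicontinuity of $(x,\mu)\mapsto\sigma_{t,x}(\mu)$ on probabilities supported in a fixed compact set). To invoke that hypothesis we need three things: first, that the measures $\wt P^v_{t,x^n,\lambda^n}$ all live in $\cP(K)$ for one fixed compact $K\subset\bR$; second, that $\wt P^v_{t,x^n,\lambda^n}$ converges weakly to $\wt P^v_{t,x^0,\lambda^0}$; and then \cref{asmp:Main} (iii) gives $\liminf_n \sigma_{t,x^n}(\wt P^v_{t,x^n,\lambda^n})\ge \sigma_{t,x^0}(\wt P^v_{t,x^0,\lambda^0})$, which is exactly the claim. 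The first point is immediate: by \cref{asmp:Main} (iv), $\|C_t\|_\infty\le b$, and $v\in\ell^\infty$, so $C_t(x,a,y)+\gamma v(y)$ takes values in $K:=[-b-\gamma\|v\|_\infty,\,b+\gamma\|v\|_\infty]$ uniformly, hence $\wt P^v_{t,x,\lambda}\in\cP(K)$ for every $(x,\lambda)$.

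\textbf{The main step: weak convergence of $\wt P^v_{t,x^n,\lambda^n}$.} This is where the real work is, and it is the part I expect to be the main obstacle, because the integrand $C_t+\gamma v$ is only lower semicontinuous, not continuous, so one cannot directly plug a bounded continuous test function through. The plan is to show, for every bounded \emph{lower semicontinuous} $g:\bR\to\bR$,
\begin{align}\label{eq:lsc-portmanteau}
\liminf_{n\to\infty}\int_{\bR} g\,\dif\wt P^v_{t,x^n,\lambda^n} \ge \int_{\bR} g\,\dif\wt P^v_{t,x^0,\lambda^0},
\end{align}
which by the portmanteau theorem is equivalent to weak convergence $\wt P^v_{t,x^n,\lambda^n}\Rightarrow\wt P^v_{t,x^0,\lambda^0}$ (since on the compact $K$ weak convergence is characterized by the $\liminf$ inequality for lower semicontinuous functions, equivalently the $\limsup$ inequality for upper semicontinuous functions). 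Unwinding \eqref{eq:DefPtilde}, the left side of \eqref{eq:lsc-portmanteau} is $\liminf_n\int_{\bA}\int_{\bX} g\big(C_t(x^n,a,y)+\gamma v(y)\big)\,P(t,x^n,a,\dif y)\,\lambda^n(\dif a)$. Because $g$ is bounded lsc and $C_t$ is lsc while $v$ is lsc, the composition $(x,a,y)\mapsto g\big(C_t(x,a,y)+\gamma v(y)\big)$ is lsc and bounded; combined with the weak continuity of $(x,a)\mapsto P(t,x,a,\cdot)$ in \cref{asmp:Main} (i), an application of a generalized Fatou lemma for weakly converging kernels (of the type: if $\nu^n\Rightarrow\nu^0$ and $h^n\to h^0$ with $h^n$ uniformly bounded and $\liminf h^n(z^n)\ge h^0(z^0)$ whenever $z^n\to z^0$, then $\liminf\int h^n\,\dif\nu^n\ge\int h^0\,\dif\nu^0$) yields, for each fixed $a$ and any $x^n\to x^0$, $a^n\to a^0$,
\begin{align*}
\liminf_{n\to\infty}\int_{\bX} g\big(C_t(x^n,a^n,y)+\gamma v(y)\big)\,P(t,x^n,a^n,\dif y) \ge \int_{\bX} g\big(C_t(x^0,a^0,y)+\gamma v(y)\big)\,P(t,x^0,a^0,\dif y).
\end{align*}

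\textbf{Passing the outer $\lambda^n$-integral.} It remains to integrate the inner expression against $\lambda^n$ and let $n\to\infty$. Set $\Phi^n(a):=\int_{\bX} g\big(C_t(x^n,a,y)+\gamma v(y)\big)\,P(t,x^n,a,\dif y)$ and $\Phi^0(a):=\int_{\bX} g\big(C_t(x^0,a,y)+\gamma v(y)\big)\,P(t,x^0,a,\dif y)$; these are uniformly bounded by $\|g\|_\infty$ and satisfy $\liminf_n\Phi^n(a^n)\ge\Phi^0(a^0)$ whenever $a^n\to a^0$ by the display above (applied with $x^n\to x^0$ and this $a^n$). Since $\lambda^n\Rightarrow\lambda^0$ in $\Lambda$, one more application of the same generalized Fatou lemma for weakly converging measures gives $\liminf_n\int_\bA \Phi^n\,\dif\lambda^n \ge \int_\bA\Phi^0\,\dif\lambda^0$, which is precisely \eqref{eq:lsc-portmanteau}. (The boundedness of $\bigcup_x\cA_t(x)$ from \cref{asmp:Main} (ii) is not strictly needed here since we integrate bounded functions, but it guarantees the $\lambda^n$ stay in a weakly compact set if a subsequence argument is preferred.) Having established $\wt P^v_{t,x^n,\lambda^n}\Rightarrow\wt P^v_{t,x^0,\lambda^0}$ with all measures in $\cP(K)$, \cref{asmp:Main} (iii) delivers $\liminf_n G^{\lambda^n}_t v(x^n)=\liminf_n\sigma_{t,x^n}(\wt P^v_{t,x^n,\lambda^n})\ge\sigma_{t,x^0}(\wt P^v_{t,x^0,\lambda^0})=G^{\lambda^0}_t v(x^0)$, completing the proof. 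The one technical subtlety to be careful about is the order of limits: the cleanest route is to pass to a subsequence realizing the $\liminf$ on the left, and then the two successive Fatou-type arguments can be carried out along that subsequence without circularity.
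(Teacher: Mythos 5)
There is a genuine gap at the main step. Your reduction hinges on the claim that for every bounded \emph{lower semicontinuous} $g:\bR\to\bR$ the composition $(x,a,y)\mapsto g\bigl(C_t(x,a,y)+\gamma v(y)\bigr)$ is lower semicontinuous, and hence that $\wt P^v_{t,x^n,\lambda^n}$ converges weakly to $\wt P^v_{t,x^0,\lambda^0}$. Neither is true. Lower semicontinuity is not preserved under composition with a merely lsc outer function: take $g=\1_{(-\infty,1)}$ (lsc, bounded) and an lsc inner function $h$ with $h(z_0)=0$ but $h(z_n)=1$ along $z_n\to z_0$; then $g\circ h$ jumps down at $z_0$. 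The composition is lsc only when $g$ is lsc \emph{and nondecreasing}. Correspondingly, the weak convergence you assert fails in general: with $v\equiv 0$, no action dependence, $P$ a fixed measure, and $C_t(x,a,y)=\1_{\{x\ne x^0\}}$ (lsc), one has $\wt P^v_{t,x^n,\lambda^n}=\delta_1$ for all $n$ while $\wt P^v_{t,x^0,\lambda^0}=\delta_0$, so the sequence converges weakly to $\delta_1\ne\delta_0$. Your inequality \eqref{eq:lsc-portmanteau} then fails for $g=\1_{(-\infty,1)}$. Thus you cannot invoke \cref{asmp:Main}~(iii) with limit $\wt P^v_{t,x^0,\lambda^0}$, and the argument does not close.

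What survives of your computation is exactly the inequality for the \emph{nondecreasing} lsc test functions $g=\1_{(r,\infty)}$, which yields $\limsup_n \wt P^v_{t,x^n,\lambda^n}((-\infty,r])\le \wt P^v_{t,x^0,\lambda^0}((-\infty,r])$ for all $r$. This is how the paper proceeds: since all the measures are supported in a fixed compact $[-c,c]$, Prokhorov gives a subsequential weak limit $\overline P^0$, which in general differs from $\wt P^v_{t,x^0,\lambda^0}$ but satisfies $\overline P^0((-\infty,r])\le \wt P^v_{t,x^0,\lambda^0}((-\infty,r])$ (first-order stochastic dominance). Then \cref{asmp:Main}~(iii) is applied with the \emph{actual} weak limit $\overline P^0$, giving $\liminf_n\sigma_{t,x^n}(\wt P^v_{t,x^n,\lambda^n})\ge\sigma_{t,x^0}(\overline P^0)$, and the monotonicity axiom of the distributional convex risk measure (\cref{def:DCRM}~(ii)) supplies the missing link $\sigma_{t,x^0}(\overline P^0)\ge\sigma_{t,x^0}(\wt P^v_{t,x^0,\lambda^0})$. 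That monotonicity step is an essential ingredient your proof never uses, and without it the lemma cannot be deduced from convergence of the measures alone. To repair your argument, replace the weak-convergence claim by the stochastic-dominance comparison of subsequential limits and add the appeal to \cref{def:DCRM}~(ii).
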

\begin{proof}

We proceed by contradiction. Suppose $\liminf_{n\to\infty} G^{\lambda^n}_t v (x^n)<G^{\lambda^0}_t v(x^0)$. By definition (see \eqref{eq:DefG}), we have
\begin{align}\label{eq:GLSCContradiction}
\liminf_{n\to\infty} \sigma_{t,x^n}\left(\wt P^v_{t,x^n,\lambda^n}\right) < \sigma_{t,x^0}\left(\wt P^v_{t,x^0,\lambda^0}\right). 
\end{align}
Thus, there exists a subsequence $(x^{n_k})_{k\in\bN}$ such that
\begin{align*}
\lim_{k\to\infty}\sigma_{t,x^{n_k}}\left(\wt P^v_{t,x^{n_k},\lambda^{n_k}}\right) = \liminf_{n\to\infty} \sigma_{t,x^n}\left(\wt P^v_{t,x^n,\lambda^n}\right)\,.
\end{align*}
To ease the notation, without loss of generality, we assume  $\big(\sigma_{t,x^{n}}\left(\wt P^v_{t,x^{n},\lambda^{n}}\right)\big)_{k\in\bN}$ converges. Next, let $c=b+\gamma\,\|v\|_\infty$ and note that $\supp\wt P^v_{t,x^n,\lambda^n}\subseteq[-c,c]$ for $n\in\bN$ due to \cref{asmp:Main} (iv). Thus, by the Prokhorov theorem (cf. \cite[Section 15.6, Theorem 15.22]{Aliprantis2006book}), $(\wt P^v_{t,x^n,\lambda^n})_{n\in\bN}$ has a weakly converging subsequence. Thus, without loss of generality, we assume $(\wt P^v_{t,x^n,\lambda^n})_{n\in\bN}$ converges weakly and denote its limit by $\overline P^0$. By \cref{lem:Portmanteau} (f) and the fact that a CDF has at most countably many points of discontinuity, we have 
\begin{align}\label{eq:rConvDense}
\left\{r\in\bR: \lim_{n\to\infty}\wt P^v_{t,x^n,\lambda^n}((r,\infty)) = \overline P^0((r,\infty)) \right\} \text{ is dense in } \bR.
\end{align}
Moreover, note that $(x,a,y)\mapsto\1_{(r,\infty)}\big(C_{t}(x,a,y)+\gamma v(y)\big)$ is lower semicontinuous. In view of \cref{asmp:Main} (i), applying \cref{lem:ConvVaryingMeas} repeatedly, we obtain 
\begin{align}
    \liminf_{n\to\infty}\wt P^v_{t,x^n,\lambda^n}((r,\infty)) &= \liminf_{n\to\infty} \int_{\bA}\int_{\bX} \1_{(r,\infty)}\big(C_t(x^n,a,y)+\gamma v(y)\big) P(t,x^n,a,\dif y)\lambda^n(\dif a) 
    \nonumber 
    \\
    &\ge \int_{\bA}\int_{\bX} \1_{(r,\infty)}(C_t(x^0,a,y)+\gamma v(y)) P(t,x^0,a,\dif y)\lambda^0(\dif a) \nonumber 
    \\
    &= \wt P^v_{t,x^0,\lambda^0}((r,\infty)).    
\end{align}
Therefore, $\limsup_{n\to\infty}\wt P^v_{t,x^n,\lambda^n}((-\infty,r]) \le \wt P^v_{t,x^0,\lambda^0}((-\infty,r])$ for $r\in\bR$. This together with \eqref{eq:rConvDense} and the right contintuity of CDF implies
\begin{align*}
\overline P^0((-\infty,r]) \le  \wt P^v_{t,x^0,\lambda^0}((-\infty,r]), \quad r\in\bR.
\end{align*}
It follows from \cref{asmp:Main} (iii) and \cref{def:DCRM} (ii) that
\begin{align*}
\liminf_{n\to\infty}\sigma_{t,x^n}\left(\wt P^v_{t,x^n,\lambda^n}\right) \ge \sigma_{t,x^0}\left(\overline P^0\right) \ge \sigma_{t,x^0}(\wt P^v_{t,x^0,\lambda^0}),
\end{align*}
which contradicts \eqref{eq:GLSCContradiction}.
\end{proof}

Next, we introduce a contraction property below. This contraction property  proves useful when we establish the infinite horizon DPP in \cref{sec:DPPInfinite}.
\begin{lemma}\label{lem:GContr}
Suppose \cref{asmp:Main} (iv) holds. For any $t\in\bN$, $\lambda\in\Lambda$ and $v^1,v^2\in\ell^\infty(\bX,\allowbreak\cB(\bX))$, we have 
\begin{align*}
\left\|G^{\lambda}_t v^1 - G^{\lambda}_t v^2\right\|_\infty \le \gamma\,\|v^1-v^2\|_\infty
\end{align*}
and
\begin{align*}
\left|H_0 v^1 - H_0 v^2\right| \le \gamma\,\|v^1-v^2\|_\infty.
\end{align*}
\end{lemma}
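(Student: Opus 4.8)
The plan is to establish the contraction for $G^\lambda_t$ first, since the statement for $H_0$ is entirely analogous with $\wt P^{v}_{t,x,\lambda}$ replaced by $\wt P^v_0$ and with no state dependence. Fix $t\in\bN$, $\lambda\in\Lambda$, and $v^1,v^2\in\ell^\infty(\bX,\cB(\bX))$. Without loss of generality write $m:=\|v^1-v^2\|_\infty$, so that $v^2 - m \le v^1 \le v^2 + m$ pointwise on $\bX$. The strategy is to compare the laws $\wt P^{v^1}_{t,x,\lambda}$ and $\wt P^{v^2 \pm m}_{t,x,\lambda}$ via the stochastic-order axiom \cref{def:DCRM} (ii), and then to use the translation-invariance axiom \cref{def:DCRM} (i) to pull the constant $\gamma m$ out.

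The key steps, in order. First, I would observe that for fixed $x$, the map $y \mapsto C_t(x,a,y) + \gamma v^1(y)$ is pointwise dominated by $y \mapsto C_t(x,a,y) + \gamma v^2(y) + \gamma m$ and dominates $y \mapsto C_t(x,a,y) + \gamma v^2(y) - \gamma m$, for every $a\in\bA$. Integrating the indicator $\1_{(-\infty,r]}$ of these quantities against $P(t,x,a,\dif y)\,\lambda(\dif a)$ as in \eqref{eq:DefPtilde}, monotonicity of the integral gives, for every $r\in\bR$,
\begin{align*}
\wt P^{v^2+\gamma m\,/\,\gamma}_{t,x,\lambda}\text{-type bound}:\quad \wt P^{v^1}_{t,x,\lambda}\big((-\infty,r]\big) \ge \wt P^{v^2}_{t,x,\lambda}\big((-\infty,r-\gamma m]\big) = \big(\wt P^{v^2}_{t,x,\lambda} * \delta_{\gamma m}\big)\big((-\infty,r]\big),
\end{align*}
and symmetrically $\wt P^{v^1}_{t,x,\lambda}\big((-\infty,r]\big) \le \big(\wt P^{v^2}_{t,x,\lambda} * \delta_{-\gamma m}\big)\big((-\infty,r]\big)$; here I use that adding the constant $\gamma m$ to the integrand translates the resulting law by $\gamma m$, i.e. $\wt P^{v^2 + m}_{t,x,\lambda} = \wt P^{v^2}_{t,x,\lambda} * \delta_{\gamma m}$ directly from \eqref{eq:DefPtilde}. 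Second, apply \cref{def:DCRM} (ii) to the first CDF inequality (larger CDF, i.e. stochastically smaller, yields smaller risk) to get $\sigma_{t,x}\big(\wt P^{v^1}_{t,x,\lambda}\big) \le \sigma_{t,x}\big(\wt P^{v^2}_{t,x,\lambda} * \delta_{\gamma m}\big)$, and apply \cref{def:DCRM} (i) to rewrite the right side as $\sigma_{t,x}\big(\wt P^{v^2}_{t,x,\lambda}\big) + \gamma m$. Third, the symmetric CDF inequality combined with (ii) and (i) gives $\sigma_{t,x}\big(\wt P^{v^1}_{t,x,\lambda}\big) \ge \sigma_{t,x}\big(\wt P^{v^2}_{t,x,\lambda}\big) - \gamma m$. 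Together these yield $\big|G^\lambda_t v^1(x) - G^\lambda_t v^2(x)\big| \le \gamma m$ for every $x\in\bX$, hence $\|G^\lambda_t v^1 - G^\lambda_t v^2\|_\infty \le \gamma\|v^1-v^2\|_\infty$. Finally, repeat verbatim with $H_0$: $\wt P^{v^1}_0$ is the law of $\gamma v^1(Y)$, $Y\sim\xi_1$, which is stochastically between $\gamma v^2(Y) - \gamma m$ and $\gamma v^2(Y) + \gamma m$, so the same two applications of \cref{def:DCRM} (i)--(ii) to $\sigma_0$ give $|H_0 v^1 - H_0 v^2| \le \gamma m$.

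There is essentially no hard part here; the only thing requiring a moment's care is the bookkeeping that adding a constant $\gamma m$ \emph{inside} the integrand of \eqref{eq:DefPtilde} produces the convolution $\cdot * \delta_{\gamma m}$ of the output law (so that axiom (i) applies with constant $\gamma m$, not $m$), and that the direction of the CDF inequality matches the direction in \cref{def:DCRM} (ii) — namely, a pointwise-larger integrand gives a pointwise-smaller CDF of the pushforward, which (ii) then sends to a larger value of $\sigma_{t,x}$, consistent with $\sigma$ being a risk measure that is monotone increasing in the argument. One should also note that all laws involved have support in a common compact interval (by \cref{asmp:Main} (iv) and boundedness of $v^1,v^2$), so everything lives in $\cP_b(\bR)$ and the axioms of \cref{def:DCRM} genuinely apply; this is the same observation already used in \cref{lem:GBasic} and \cref{lem:GMeasurability}.
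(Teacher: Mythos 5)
Your proposal is correct and follows essentially the same route as the paper's proof: sandwich $v^1$ between $v^2 \pm \|v^1-v^2\|_\infty$, compare the CDFs of the pushforward laws from \eqref{eq:DefPtilde}, and apply \cref{def:DCRM} (ii) followed by (i), with the correct observation that the constant shifts the output law by $\gamma\|v^1-v^2\|_\infty$ rather than $\|v^1-v^2\|_\infty$. No gaps.
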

\begin{proof}
Note that $v^1-\|v^1-v^2\|_\infty \le v^2 \le v^1+\|v^1-v^2\|_\infty$. In view of \eqref{eq:DefPtilde}, for any $r\in\bR$, we have 
\begin{align*}
\wt P^{v^1-\|v^1-v^2\|_\infty}_{t,x,\lambda}((-\infty,r]) \ge \wt P^{v^2}_{t,x,\lambda}((-\infty,r]) \ge \wt P^{v^1+\|v^1-v^2\|_\infty}_{t,x,\lambda}((-\infty,r]).
\end{align*}
The above together with \cref{def:DCRM} (ii) implies that 
\begin{align*}
\sigma_{t,x}\left(\wt P^{v^1-\|v^1-v^2\|_\infty}_{t,x,\lambda}\right) \le \sigma_{t,x}\left(\wt P^{v^2}_{t,x,\lambda}\right) \le \sigma_{t,x}\left(\wt P^{v^1+\|v^1-v^2\|_\infty}_{t,x,\lambda}\right),\quad x\in\bX.
\end{align*}
It follows from \cref{def:DCRM} (i) and \eqref{eq:DefPtilde} that 
\begin{align*}
\sigma_{t,x}\left(\wt P^{v^1}_{t,x,\lambda}\right) - \gamma\|v^1-v^2\|_\infty \le \sigma_{t,x}\left(\wt P^{v^2}_{t,x,\lambda}\right) \le \sigma_{t,x}\left(\wt P^{v^1}_{t,x,\lambda}\right) + \gamma\|v^1-v^2\|_\infty,\quad x\in\bX.
\end{align*}
Hence, recalling the definition of $G_t^\lambda$ from \eqref{eq:DefG}, this proves the inequality with $G^\lambda_t$. The inequality with $H_0$ can be proved analogously.
\end{proof}

\section{Finite horizon dynamic programming principle}\label{sec:DPPFinite}

To obtain our main result concerning the finite horizon DPP, we require proving regularity of its various components. To do so,  we introduce some new notations and some technical lemmas.

For $T\in\bN$ and $t\in\set{0,1,\dots,T}$ we use a recursion to define the value functions as follows
\begin{align}\label{eq:DefJtTp}
J^{\fp}_{t,T} := \begin{cases}
O\,, & t > T,
\\
H^{\fp}_t J^{\fp}_{t+1,T}\,, & t \le T,
\end{cases}
\end{align}
and  $J^{\fp}_{0,T}:= H_0 J^{\fp}_{1,T}$.
Note that, by \cref{lem:GMeasurability}, the mapping $x\mapsto H^{\fp}_tv(x)$ is $\cB(\bX)$-$\cB(\bR)$ measurable. As a direct consequence of \cref{lem:rhotTH}, we have the following relationship between DRMs and value functions:
\begin{align}\label{eq:sigmaJ}
\varsigma^{\fX^\fp}_{0,T}(\fC^\fp) = J^{\fp}_{0,T},\quad\text{and}\quad \varsigma^{\fX^\fp}_{t,\fX^\fp,T}(\fC^\fp) =  J^{\fp}_{t,T}(X^{\fp}_t),\qquad
\bP\text{-a.s.},\, t,T\in\bN,\, t<T.
\end{align}
We next define the risk-averse version of a Bellman operator, denoted $S_t$, which acts on $v\in\ell^\infty(\bX,\cB(\bX))$, as
\begin{align}\label{eq:DefS}
S_t v(x) := \inf_{\lambda\in\varpi_t(x)}G^{\lambda}_t v(x) = \inf_{\lambda\in\varpi_t(x)}\sigma_{t,x}\left(\wt P^v_{t,x,\lambda}\right),\qquad t\in\bN.
\end{align}
For $t,T\in\bN$, we define 
\begin{align}\label{eq:DefJstartT} 
J^{*}_{t,T} := 
\begin{cases}
O, & t > T\\
S_{t} J^{*}_{t+1,T}, & t \le T
\end{cases}
\end{align}
and $J^*_{0,T}:=H_0 J^*_{1,T}$. For definition \eqref{eq:DefJstartT}  to be well defined, we must investigate the measurability of $J_{t+1,T}^*$ 
(boundedness is obvious from \cref{lem:GBasic} (a)). The lemma below resolves this issue by establishing its lower semicontinuity.
\begin{lemma}\label{lem:JtTLSC}
Under \cref{asmp:Main}, for any $t,T\in\bN$ with $t<T$, $x\mapsto J^{*}_{t,T}(x)$ is lower semi-continuous on $\bX$. 
\end{lemma}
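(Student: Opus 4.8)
\textbf{Proof plan for \cref{lem:JtTLSC}.}

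The plan is to prove the statement by backward induction on $t$, from $t=T$ down to $t=1$. The base case is $t=T$: here $J^*_{T,T} = S_T O$, where $O\equiv 0$ is trivially lower semi-continuous. The inductive step assumes $J^*_{t+1,T}$ is lower semi-continuous (and bounded, by \cref{lem:GBasic}(a)) and seeks to conclude that $J^*_{t,T} = S_t J^*_{t+1,T}$ is lower semi-continuous, where $S_t v(x) = \inf_{\lambda\in\varpi_t(x)} G^\lambda_t v(x)$. So the whole argument reduces to showing: if $v\in\ell^\infty(\bX,\cB(\bX))$ is lower semi-continuous and bounded, then $x\mapsto \inf_{\lambda\in\varpi_t(x)} G^\lambda_t v(x)$ is lower semi-continuous.

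For that step, I would invoke a Berge-type minimum theorem for the infimum of a lower semi-continuous function over an upper hemi-continuous, compact-valued correspondence. The two ingredients are: (i) the joint lower semi-continuity of $(x,\lambda)\mapsto G^\lambda_t v(x)$, which is exactly \cref{lem:GvLSC} (valid under \cref{asmp:Main}(i)(iii)(iv), all of which are subsumed in \cref{asmp:Main}); and (ii) the correspondence $\varpi_t$ is upper hemi-continuous with $\bigcup_{x\in\bX}\varpi_t(x)$ compact — this is noted right after \cref{asmp:Main} as a consequence of \cref{asmp:Main}(ii) via \cite[Section 15.6, Theorem 15.22 and Section 17.2, Theorem 17.13]{Aliprantis2006book}. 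Concretely, to verify lower semi-continuity of $S_t v$ at a point $x^0$, take $(x^n)\to x^0$ with $\lim_n S_t v(x^n)$ existing (pass to a subsequence achieving $\liminf$); for each $n$ pick $\lambda^n\in\varpi_t(x^n)$ nearly attaining the infimum, i.e. $G^{\lambda^n}_t v(x^n) \le S_t v(x^n) + 1/n$. Since all $\lambda^n$ lie in the compact set $\bigcup_x\varpi_t(x)$, extract a convergent subsequence $\lambda^{n_k}\to\lambda^0$; upper hemi-continuity of $\varpi_t$ (which for compact-valued correspondences gives closed graph) yields $\lambda^0\in\varpi_t(x^0)$. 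Then
\begin{align*}
\liminf_{n\to\infty} S_t v(x^n) = \lim_{k\to\infty}\left(S_t v(x^{n_k}) + \tfrac{1}{n_k}\right) \ge \liminf_{k\to\infty} G^{\lambda^{n_k}}_t v(x^{n_k}) \ge G^{\lambda^0}_t v(x^0) \ge S_t v(x^0),
\end{align*}
where the middle inequality is \cref{lem:GvLSC} and the last is the definition of the infimum. This is precisely lower semi-continuity of $S_t v$ at $x^0$.

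The main obstacle — and the only genuinely delicate point — is ensuring the ``approximately optimal selector'' $\lambda^n$ can be chosen to live in a fixed compact set independent of $n$, so that a convergent subsequence exists, together with the closed-graph property of $\varpi_t$ guaranteeing the limit $\lambda^0$ remains admissible at $x^0$. Both of these are handed to us by \cref{asmp:Main}(ii) and the cited corollaries (compactness of $\bigcup_x\varpi_t(x)$ and upper hemi-continuity of $\varpi_t$), so the difficulty is really just in correctly assembling the subsequence extractions. Everything else — the backward induction bookkeeping, the boundedness that makes $G^\lambda_t v$ well defined at each stage, and the reduction to \cref{lem:GvLSC} — is routine. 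I do not expect to need any compactness or metrizability of $\Lambda$ beyond what is already built into the setup.
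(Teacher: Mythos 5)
Your proposal is correct and follows essentially the same route as the paper: backward induction from $t=T$, reducing each step to the joint lower semi-continuity of $(x,\lambda)\mapsto G^\lambda_t v(x)$ (\cref{lem:GvLSC}) combined with a Berge-type minimum theorem for the infimum over the compact-valued, upper hemi-continuous correspondence $\varpi_t$. The only cosmetic difference is that you re-derive that minimum theorem by an explicit subsequence extraction, whereas the paper simply cites its \cref{lem:InffLSC}.
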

\begin{proof}
Due to Assumption \ref{asmp:Main} (i) and Prokhorov theorem (cf. \cite[Section 15.6, Theorem 15.22]{Aliprantis2006book}), $\varpi_t(x)$ is compact. If $t=T$, then $J^{*}_{t,T} = S_tO$ by \eqref{eq:DefJstartT}, and the lower semi-continuity in $x\in\bX$ follows from \cref{asmp:Main} (ii), \cref{lem:GvLSC}, and \cref{lem:InffLSC}. We induce backward for $t<T$. As $J^*_{t,T}=S_tJ^{*}_{t+1,T}$, invoking \cref{asmp:Main} (ii), \cref{lem:GvLSC}, and \cref{lem:InffLSC} again, we conclude the proof.
\end{proof}

Below we present one of our main results -- a finite horizon DPP.
\begin{theorem}\label{thm:FiniteDPP}
Let $\gamma\in[0,1]$ and $T\in\bN$. Under Assumption \ref{asmp:Main}, the following is true.
\begin{enumerate}
\item[(a)] For any $t\in\set{0,\dots,T}$ and $x\in\bX$, $\argmin_{\lambda\in\varpi_t(x)}G^{\lambda}_t J^{*}_{t,T}(x)$ is not empty and closed, and there is a measurable $\pi^*_t:(\bX,\cB(\bX))\to(\Lambda,\cE(\Lambda))$ such that 
\begin{align}\label{eq:Finitepistar}
\pi^*_t(x)\in\argmin_{\lambda\in\varpi_t(x)}G^{\lambda}_t J^{*}_{t+1,\infty}(x),\quad x\in\bX.
\end{align}
\item[(b)] 
For $\fp^*=(\pi^*_t)_{t\in\set{0,\dots,T}}$ satisfying \eqref{eq:Finitepistar} for all $t\in\set{0,\dots,T}$, we have $J^{\fp^*}_{t,T}=J^{*}_{t,T}=\inf_{\fp\in\Pi}J^{\fp}_{t,T}$ for $0<t<T$, and $J^{\fp^*}_{0,T} = J^{*}_{0,T} = \inf_{(\fX^\fp,\fA^\fp)\in\Psi}\rho^{\fX^\fp}_{0,T}(\fC^\fp),$
i.e., $\fp^*$ is the optimal Markovian policy.
\end{enumerate}
\end{theorem}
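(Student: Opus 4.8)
\textbf{Proof plan for \cref{thm:FiniteDPP}.}
The plan is to prove part (a) and part (b) together by backward induction on $t$, since the existence of the measurable selector $\pi^*_t$ at stage $t$ and the identity $J^{\fp^*}_{t,T}=J^*_{t,T}$ are naturally intertwined. For part (a), fix $t$ and note that by \cref{lem:JtTLSC} the function $J^*_{t+1,T}$ is lower semi-continuous and bounded, so by \cref{lem:GvLSC} the map $(x,\lambda)\mapsto G^\lambda_t J^*_{t+1,T}(x)$ is lower semi-continuous. By \cref{asmp:Main}~(ii) and the remark following it, $\varpi_t$ is upper hemi-continuous with $\bigcup_{x}\varpi_t(x)$ compact, hence $x\mapsto\varpi_t(x)$ is a compact-valued upper hemi-continuous correspondence. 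First I would invoke a Berge-type measurable maximum theorem (e.g.\ \cite[Section 18.3, Theorem 18.19]{Aliprantis2006book} applied to $-G^\lambda_t J^*_{t+1,T}$, together with \cref{lem:JtTLSC} and \cref{lem:GvLSC}) to conclude that $\argmin_{\lambda\in\varpi_t(x)}G^\lambda_t J^*_{t+1,T}(x)$ is nonempty (compactness of $\varpi_t(x)$ plus lower semi-continuity in $\lambda$) and closed, that $x\mapsto S_tJ^*_{t+1,T}(x)=\inf_{\lambda\in\varpi_t(x)}G^\lambda_t J^*_{t+1,T}(x)$ is measurable, and that a $\cB(\bX)$-$\cE(\Lambda)$ measurable selector $\pi^*_t$ satisfying \eqref{eq:Finitepistar} exists; here one uses that $\cB(\Lambda)=\cE(\Lambda)$ and that $\Lambda$ is a separable metrizable space. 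The nonemptiness of the $\argmin$ can alternatively be argued directly: take a minimizing sequence $\lambda^n\in\varpi_t(x)$, extract a weakly convergent subsequence by compactness, and apply the lower semi-continuity $\liminf_n G^{\lambda^n}_t J^*_{t+1,T}(x)\ge G^{\lambda^0}_t J^*_{t+1,T}(x)$ from \cref{lem:GvLSC} with $x^n\equiv x$.

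For part (b), I would show the two inequalities $J^*_{t,T}\le\inf_{\fp}J^\fp_{t,T}$ and $J^{\fp^*}_{t,T}\le J^*_{t,T}$, which together with the trivial $\inf_\fp J^\fp_{t,T}\le J^{\fp^*}_{t,T}$ force all three to coincide. For the first inequality, fix any $\fp\in\Pi$ and induct backward: at $t=T$, $J^\fp_{T,T}=H^\fp_TO=G^{\pi_T(x)}_TO(x)\ge \inf_{\lambda\in\varpi_T(x)}G^\lambda_TO(x)=S_TO=J^*_{T,T}$, using $\pi_T(x)\in\varpi_T(x)$; for the inductive step, assuming $J^\fp_{t+1,T}\ge J^*_{t+1,T}$, monotonicity of $G^\lambda_t$ in its function argument (\cref{lem:GBasic}~(a), which needs \cref{asmp:Main}~(iv)) gives $J^\fp_{t,T}(x)=G^{\pi_t(x)}_t J^\fp_{t+1,T}(x)\ge G^{\pi_t(x)}_t J^*_{t+1,T}(x)\ge S_tJ^*_{t+1,T}(x)=J^*_{t,T}(x)$, and the same monotonicity argument through $H_0$ (\cref{lem:GBasic}) handles $t=0$. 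For the second inequality, I would plug $\fp^*$ into the recursion and verify $J^{\fp^*}_{t,T}=J^*_{t,T}$ by backward induction: at $t=T$, $J^{\fp^*}_{T,T}=G^{\pi^*_T(x)}_TO(x)=\inf_{\lambda\in\varpi_T(x)}G^\lambda_TO(x)=J^*_{T,T}$ by \eqref{eq:Finitepistar}; assuming $J^{\fp^*}_{t+1,T}=J^*_{t+1,T}$, we get $J^{\fp^*}_{t,T}(x)=G^{\pi^*_t(x)}_t J^{\fp^*}_{t+1,T}(x)=G^{\pi^*_t(x)}_t J^*_{t+1,T}(x)=\inf_{\lambda\in\varpi_t(x)}G^\lambda_t J^*_{t+1,T}(x)=J^*_{t,T}(x)$, again using \eqref{eq:Finitepistar}, and similarly at $t=0$ through $H_0$. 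Finally, \eqref{eq:sigmaJ} translates $J^{\fp^*}_{0,T}=J^*_{0,T}$ into the statement about $\varsigma^{\fX^{\fp^*}}_{0,T}(\fC(\fX^{\fp^*},\fA^{\fp^*}))$; to pass from $\inf_{\fp\in\Pi}$ over Markovian policies to $\inf_{(\fX,\fA)\in\Psi}$ over all admissible controlled processes, one notes that by \eqref{eq:sigmaJ} and the first inequality every Markovian $\fp$ has value at least $J^*_{0,T}$, while \cref{asmp:PlaceHolder}~(ii) guarantees $\fp^*$ is realized by some $(\fX^{\fp^*},\fA^{\fp^*})\in\Psi$; the fact that a Markovian optimum is no worse than a general history-dependent control is exactly the content of \cref{prop:MarkovControlOptFinite} and may be cited here.

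The main obstacle I expect is the measurable selection step in part (a): one needs the correspondence $x\mapsto\argmin_{\lambda\in\varpi_t(x)}G^\lambda_tJ^*_{t+1,T}(x)$ to be measurable with nonempty closed values in order to extract a measurable selector valued in $(\Lambda,\cE(\Lambda))$, and this requires carefully marrying (i) the lower semi-continuity of $(x,\lambda)\mapsto G^\lambda_tJ^*_{t+1,T}(x)$ from \cref{lem:GvLSC} (which itself rests on the inductive lower semi-continuity of $J^*_{t+1,T}$ from \cref{lem:JtTLSC} and \cref{asmp:Main}~(i)(iii)(iv)), (ii) the upper hemi-continuity and compactness of $\varpi_t$, and (iii) the identification $\cB(\Lambda)=\cE(\Lambda)$ so that $\cB(\Lambda)$-measurability of the selector is the same as $\cE(\Lambda)$-measurability demanded by the definition of $\Pi_t$. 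The Kuratowski–Ryll-Nardzewski selection theorem and the measurable maximum theorem in \cite{Aliprantis2006book} supply what is needed, but one must check their hypotheses hold in this weak-topology setting; everything else in the proof is a routine backward induction using monotonicity of $G^\lambda_t$ and the defining property \eqref{eq:Finitepistar} of $\fp^*$.
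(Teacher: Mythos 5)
Your plan follows essentially the same route as the paper: part (a) rests on the joint lower semi-continuity of $(x,\lambda)\mapsto G^{\lambda}_t J^{*}_{t+1,T}(x)$ (\cref{lem:GvLSC} combined with the inductive lower semi-continuity of $J^{*}_{t+1,T}$ from \cref{lem:JtTLSC}) together with compactness of $\varpi_t(x)$, and part (b) is the same two-inequality backward induction using monotonicity of $G^{\lambda}_t$ (\cref{lem:GBasic}) and the defining property \eqref{eq:Finitepistar}, finished off by \cref{lem:rhotTH} and \cref{prop:MarkovControlOptFinite}.

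The one step where your plan would not go through as written is the measurable-selection step, which you yourself flag as the main obstacle. The measurable maximum theorem you cite requires the objective to be a Carath\'eodory function, i.e.\ jointly measurable and \emph{continuous} in $\lambda$; here $(x,\lambda)\mapsto G^{\lambda}_t J^{*}_{t+1,T}(x)$ is only jointly lower semi-continuous, so that theorem does not apply in its standard form. The paper instead verifies measurability of the correspondence $x\mapsto\argmin_{\lambda\in\varpi_t(x)}G^{\lambda}_t J^{*}_{t+1,T}(x)$ by hand: for any closed $F\subseteq\bigcup_{x\in\bX}\varpi_t(x)$,
\begin{align*}
B_F=\Set{x\in\bX:\argmin_{\lambda\in\varpi_t(x)}G^{\lambda}_t J^{*}_{t+1,T}(x)\cap F\ne\emptyset}
=\Set{x\in\bX:J^{*}_{t,T}(x)=\min_{\lambda\in F}G^{\lambda}_t J^{*}_{t+1,T}(x)},
\end{align*}
and both $J^{*}_{t,T}$ and $x\mapsto\min_{\lambda\in F}G^{\lambda}_t J^{*}_{t+1,T}(x)$ are lower semi-continuous (the latter by \cref{lem:GvLSC} and \cref{lem:InffLSC}), hence Borel, so $B_F\in\cB(\bX)$. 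Measurability of the correspondence gives weak measurability, and the Kuratowski--Ryll-Nardzewski theorem (which you also name) supplies the selector, with $\cB(\Lambda)=\cE(\Lambda)$ from \cref{lem:sigmaAlgBE} giving the required $\cE(\Lambda)$-measurability. If you replace the appeal to the Carath\'eodory-based maximum theorem with this direct verification (or with a semicontinuous variant of it), your argument is complete; everything else, including the treatment of $H_0$ at $t=0$ and the passage from Markovian policies to all of $\Psi$, matches the paper.
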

\begin{proof}
\textbf{(a)} 
We fix $t\in\bN$ for the rest of the proof. By \cref{lem:GvLSC} and \cref{lem:JtTLSC}, $(x,\lambda) \mapsto G^{\lambda}_t J^{*}_{t+1,T}(x)$ is lower semi-continuous. Due to \cref{asmp:Main} (ii), $\varpi_t(x)$ is compact. It follows that for $x\in\bX$, $\argmin_{\lambda\in\varpi_t(x)}G^{\lambda}_t J^{*}_{t+1,T}(x)$ is not empty and closed. We claim that the lower semi-continuity of $(x,\lambda) \mapsto G^{\lambda}_t J^{*}_{t+1,T}(x)$ implies that for any closed $F\subseteq\bigcup_{x\in\bX}\varpi_t(x)$,
\begin{align*}
B_F := \left\{x\in\bX: \;\argmin_{\lambda\in\varpi_t(x)}G^{\lambda}_t J^{*}_{t+1,T}(x)\cap F \ne \emptyset \right\} \in \cB(\bX),
\end{align*}
i.e., the set-valued mapping $x\mapsto\argmin_{\lambda\in\varpi_t(x)}G^{\lambda}_t J^{*}_{t+1,T}(x)$ is $\cB(\bX)$-measurable (cf. \cite[Section 18.1, Definition 18.1]{Aliprantis2006book}). To this end note that by statement (a) we have
\begin{align*}
B_{F} = \left\{x\in\bX: J^{*}_{t,T}(x) = \min_{\lambda\in F}G^{\lambda}_t J^{*}_{t+1,T}(x)\right\}.
\end{align*}
Above, we note that $x\mapsto\min_{\lambda\in F}G^{\lambda}_t J^{*}_{t+1,T}(x)$ is well defined and lower semi-continuous in $x\in\bX$, due to \cref{lem:GvLSC}, \cref{lem:JtTLSC}, \cref{lem:InffLSC}, and the fact that $F\subseteq\bigcup_{x\in\bX}\varpi_t(x)$ is compact and upper hemi-continuous as a constant set-valued function. By \cref{lem:JstarInftyLSC} again that $J^{*}_{t,T}$ is lower semi-continuous, we yield that both $\min_{\lambda\in F}G^{\lambda}_t J^{*}_{t+1,T}$ and $J^*_{t,T}$ are $\cB(\bX)$-$\cB(\bR)$ measurable. Consequently, $B_{F}\in\cB(\bX)$. Recall that a measurable set-valued function is also weakly measurable (cf. \cite[Section 18.1, Lemma 18.2]{Aliprantis2006book}). Then, by  applying the Kuratowski and Ryll-Nardzewski measurable selection theorem (cf. \cite[Section 18.3, Theorem 18.13]{Aliprantis2006book}), the set-valued function $x\mapsto\argmin_{\lambda\in\varpi_t(x)}G^{\lambda}_t J^{*}_{t+1,T}(x) $ has a $\cB(\bX)$-$\cB(\Lambda)$ measurable selector. In view of \cref{lem:sigmaAlgBE}, such a selector is also $\cB(\bX)$-$\cE(\Lambda)$ measurable.

\textbf{(b)} Combining Lemma \ref{lem:GBasic} (a) with \eqref{eq:DefJtTp} and \eqref{eq:DefJstartT}, we obatin $J^{*}_{t,T}(x) \le J^{\fp}_{t,T}(x)$ for any $t\le T$, $x\in\bX$ and $\fp\in\Pi$. It follows from induction that, regardless of $\fp$, 
\begin{align*}
J^*_{0,T}\le J^\fp_{0,T}=\varsigma^{\fX^\fp}_{0,T}(\fC^\fp),
\end{align*}
where we use \eqref{lem:rhotTH} to obtain the second equality. Next,  by \eqref{eq:DefJstartT} and statement (a) we have
\begin{align*}
J^*_{t,T} = S_{t} J^{*}_{t+1,T} = G^{\pi^*_t(x)}_t J^{*}_{t+1,T}(x) = H_t^{\fp^*}J^*_{t+1,T}(x).
\end{align*} 
Inducing backward from $t=T$, in view of \eqref{eq:DefJtTp}, we have $J^*_{t,T}=H^{\fp^*}_t\circ\cdots\circ H^{\fp^*}_T O=J^{\fp^*}_{t,T}$ and $J^*_{0,T}=H_0\circ H^{\fp^*}_1 \circ\cdots\circ H^{\fp^*}_T O=J^*_{0,T}$. Finally, by \cref{lem:rhotTH}, we conclude the proof.
\end{proof}

As the last result in this section, we argue that $\fp^*$ introduced in \cref{thm:FiniteDPP} (b) is no worse than any other history-dependent policy.
\begin{proposition}\label{prop:MarkovControlOptFinite}
Let $\gamma\in[0,1]$ and $T\in\bN$. Under \cref{asmp:Main}, we have
$$\inf_{(\fX^\fp,\fA^\fp)\in\Psi}\varsigma^{\fX^\fp}_{0,T}(\fC^\fp) \le \varsigma^{\fX}_{0,T}(\fC) \quad\text{for any}\quad (\fX,\fA)\in\Psi.$$
\end{proposition}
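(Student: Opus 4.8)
The plan is to compare an arbitrary history-dependent $(\fX,\fA)\in\Psi$ with the Markovian optimum by showing that, running the recursion \eqref{eq:DefsigmatT} backward from $t=T$, the quantity $\varsigma^{\fX}_{t,x,T}(\fC(\fX,\fA))$ dominates $J^*_{t,T}(x)$ pointwise (up to a $\bP$-null set of $\omega$), just as in the Markovian case but without the luxury of a clean operator factorization. First I would fix $(\fX,\fA)\in\Psi$ and, for each $t\in\set{1,\dots,T}$, extract from the regular conditional distribution $P^{A_t|\sU_t}$ a version that, evaluated at $\omega$, gives a probability measure on $\cA_t(X_t(\omega))$; this is legitimate because $\bP(A_t\in\cA_t(X_t))=1$ and because one can intersect with the $\bP$-full set on which all the relevant regular conditional distributions behave. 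Write $\Lambda_t(\omega)$ for this measure, so $\Lambda_t(\omega)\in\varpi_t(X_t(\omega))$ for $\bP$-a.e.\ $\omega$.

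The key step is the backward induction. At $t=T$, using \eqref{eq:GMarkov} one checks that the regular conditional distribution of $C_T(X_T,A_T,X_{T+1})$ given $\sU_T$ is exactly $\wt P^{O}_{T,X_T,\Lambda_T}$ (this uses that, conditionally on $\sU_T$, $A_T\sim\Lambda_T$ and, given $(X_T,A_T)$, $X_{T+1}\sim P(T,X_T,A_T,\cdot)$, which follows from \eqref{eq:transkernel} and the tower property since $\sigma(X_T)\vee\sigma(A_T)\subseteq\sU_T$ — caution is needed here, see below). Hence by \eqref{eq:DefsigmatT}, $\varsigma^{\fX}_{T,x,T}(\fC(\fX,\fA))\big|_{x=X_T}=\sigma_{T,X_T}(\wt P^O_{T,X_T,\Lambda_T})=G^{\Lambda_T}_T O(X_T)\ge S_T O(X_T)=J^*_{T,T}(X_T)$, $\bP$-a.s. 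For the induction step, suppose $\bP(\varsigma^{\fX}_{t+1,x,T}(\fC(\fX,\fA))\ge J^*_{t+1,T}(x),\,x\in\bX)=1$. Setting $W_t=C_t(X_t,A_t,X_{t+1})+\gamma\varsigma^{\fX}_{t+1,X_{t+1},T}(\fC(\fX,\fA))$, the induction hypothesis together with \cref{def:DCRM}(ii) (applied to the regular conditional distributions of $W_t$ and of $C_t(X_t,A_t,X_{t+1})+\gamma J^*_{t+1,T}(X_{t+1})$ given $\sU_t$, handled via the comparison-of-CDFs technique used in the proof of \cref{prop:TimeConsistency}) gives $\varsigma^{\fX}_{t,x,T}(\fC(\fX,\fA))\ge\sigma_{t,x}$ applied to the regular conditional distribution of $C_t(X_t,A_t,X_{t+1})+\gamma J^*_{t+1,T}(X_{t+1})$ given $\sU_t$; one then identifies that conditional distribution, again via \eqref{eq:transkernel}, the tower property, and the $\bG$-adaptedness of $\fA$, as $\wt P^{J^*_{t+1,T}}_{t,X_t,\Lambda_t}$, so the right side equals $G^{\Lambda_t}_t J^*_{t+1,T}(X_t)\ge S_t J^*_{t+1,T}(X_t)=J^*_{t,T}(X_t)$, $\bP$-a.s. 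Finally, at $t=0$, applying $H_0$ and the monotonicity from \cref{lem:GBasic}(a) yields $\varsigma^{\fX}_{0,T}(\fC(\fX,\fA))\ge H_0 J^*_{1,T}=J^*_{0,T}=\inf_{(\fX^\fp,\fA^\fp)\in\Psi}\varsigma^{\fX^\fp}_{0,T}(\fC(\fX^\fp,\fA^\fp))$ by \cref{thm:FiniteDPP}(b), which is the claim.

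The main obstacle I anticipate is the careful bookkeeping of regular conditional distributions in the induction step — specifically, verifying that the regular conditional distribution of $C_t(X_t,A_t,X_{t+1})+\gamma g(X_{t+1})$ given $\sU_t$ is $\wt P^g_{t,X_t,\Lambda_t}$ for a bounded measurable $g$. This requires combining \eqref{eq:GMarkov} (so that $X_{t+1}$ depends on $\sG_t$ only through $(X_t,A_t)$), the fact that $\sigma(X_t)\vee\sigma(A_t)\subseteq\sU_t\subseteq\sG_t$, the disintegration $P^{(A_t,X_{t+1})|\sU_t}(\dif a\,\dif y)=\Lambda_t(\dif a)P(t,X_t,a,\dif y)$, and a monotone-class / change-of-variables argument to push this through the map $(a,y)\mapsto C_t(X_t,a,y)+\gamma g(y)$; all of this must be done on a single $\bP$-full set uniformly, and measurability of the resulting $(x,\omega)$-map (so that the pointwise-in-$x$ inequalities make sense and the null sets can be handled as in \cref{lem:finiteTsigma}) needs the remark that $\sF_1$ contains all $\bP$-negligible sets. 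The comparison inequality itself is then a routine invocation of monotonicity of $\sigma_{t,x}$ exactly as in \cref{prop:TimeConsistency}.
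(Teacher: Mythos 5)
Your proposal is correct and follows essentially the same route as the paper's proof: a backward induction showing $\varsigma^{\fX}_{t,X_t,T}(\fC(\fX,\fA))\ge J^*_{t,T}(X_t)$ $\bP$-a.s., driven by identifying the regular conditional distribution of $C_t(X_t,A_t,X_{t+1})+\gamma J^*_{t+1,T}(X_{t+1})$ given $\sU_t$ as $\wt P^{J^*_{t+1,T}}_{t,X_t,\Lambda_t}$ with $\Lambda_t=P^{A_t|\sU_t}$ supported on $\cA_t(X_t)$, applying the monotonicity argument from \cref{prop:TimeConsistency}, bounding below by $S_t J^*_{t+1,T}(X_t)$, and concluding via \cref{thm:FiniteDPP}(b). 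The technical points you flag (the disintegration through \eqref{eq:GMarkov} and \eqref{eq:transkernel}, the null-set and measurability bookkeeping) are exactly the steps the paper treats as "straightforward to verify."
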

\begin{proof}
Fix $(\fX,\fA)\in\Psi$ for the remainder of the proof. For $t\in\bN$, let $P^{A_t|\sU_t}$ be the regular version of $\bP(A_t\in\,\cdot\,|\sU_t)$. It is straightforward to verify that 
$$\int_{\bA} \int_{\bX} \1_\cdot(C_T(X_T,a,y)) P(T,X_T,a,\dif y)\,P^{A_T|\sU_{T}}(\dif a)$$ 
is a regular version of $\bP\big(C_t(X_T,A_T,X_{T+1})\in\,\cdot\,|\sU_T\big)$.  This together with \eqref{eq:DefsigmatT} implies that
\begin{align*}
\sigma^\fX_{T,x,T}(\fC) = \sigma_{T,x}\left(\int_{\bA} \int_{\bX} \1_\cdot(C_T(X_T,a,y)) P(T,X_T,a,\dif y)\,P^{A_T|\sU_{T}}(\dif a)\right)\;.
\end{align*}
Due to the definition of $\Psi$ in \cref{subsec:Problem}, we have that 
\[
P^{A_T|\sU_T}(B)|_{B=\cA_T(X_T)}=\int_{\bA}\1_{\cA_T(X_T)}(a)\, P^{A_T|\sU_T}(\dif a)=1,\;\bP-a.s.
\]
(we also use \cite[Section 18.1, Theorem 18.6]{Aliprantis2006book} for the joint measurability of $\1_{\cA_T(x)}(a)$ as a function of $(x,a)$). It follows from \eqref{eq:DefG} and \eqref{eq:DefJstartT}  that $\sigma^\fX_{T,X_T,T}(\fC)\ge J^*_{T,T}(X_T),\,\bP$-a.s.  We next proceed to pull back the time index by induction. Suppose for some $t\in\set{1,\dots,T-1}$, we have $\varsigma^\fX_{t+1,X_{t+1},T}(\fC)\ge J^*_{t+1,T}(X_{t+1}),\,\bP$-a.s. Then, by \eqref{eq:DefsigmatT} and a similar reasoning leading to \cref{prop:TimeConsistency}, we obtain 
$$\varsigma^\fX_{t,X_t,T}(\fC) = \sigma_{t,x}\left(P^{C_{t}(X_t,A_t,X_{t+1}) + \gamma\;\varsigma^{\fX,\gamma}_{t+1,X_{t+1},T}(\fZ)|\sU_t}\right) \ge \sigma_{t,X_t}\left(P^{C_{t}(X_t,A_t,X_{t+1})+\gamma J^*_{t+1,T}(X_{t+1})|\sU_t}\right), \quad\! \bP\text{-a.s.}$$ 
Note that 
$$\int_{\bA} \int_{\bX} \1_\cdot(C_t(X_t,a,y)+\gamma J^*_{t+1,T}(y)) P(t,X_t,a,\dif y)\,P^{A_t|\sU_t}(\dif a)$$
is a regular version of $P^{C_{t}(X_t,A_t,X_{t+1})+\gamma J^*_{t+1,T}(X_{t+1})|\sU_t}$. 
By the fact that $P^{A_t|\sU_t}(B)|_{B=\cA_t(X_t)}=1,\,\bP$-a.s., \eqref{eq:DefG} and \eqref{eq:DefJstartT} again, we have $\varsigma^\fX_{t,X_t,T}(\fC)\ge J^*_{t,T}(X_{t})$, $\bP$-a.s. As a result of the induction above, $\varsigma^\fX_{1,X_1,T}(\fC)\ge J^*_{1,T}(X_{1}),\,\bP$-a.s.  With similar reasoning, we obtain $\varsigma^\fX_{0,T}(\fC)\ge J^*_{0,T}$. Finally, invoking Theorem \ref{thm:FiniteDPP} (b) completes the proof.
\end{proof}

\section{Infinite horizon dynamic programming principle}\label{sec:DPPInfinite}

As discussed in \cref{subsec:DDRM}, we make the additional assumptions that $\gamma\in[0,1)$ and $(\sigma_{t,x})_{(t,x)\in\bN\times\bX}$ is normalized to allow the infinite horizon object $\varsigma_{t,x,\infty}^{\fX}$ to be well defined. We adhere to these assumptions throughout this section. Recall that $(\sigma_{t,x})_{(t,x)\in\bN\times\bX}$ is normalized if $\sigma_{t,x}$ is normalized for $(t,x)\in\bN\times\bX$.

To establish an infinite horizon DPP for \eqref{eq:main}, we first study the value functions associated with a Markovian policy $\fp$. Recall the definitions of $G_{t}, H^{\fp}_{t}$ and $J^*_{t,T}$ from \eqref{eq:DefG}, \eqref{eq:DefH}, and \eqref{eq:DefJstartT}, respectively. We justify the definition of 
\begin{align}\label{eq:DeftInftyP}
J^{\fp}_{0,\infty}:=\lim_{T\to\infty}J^{\fp}_{0,T} \quad\text{and}\quad J^{\fp}_{t,\infty}(x):=\lim_{T\to\infty}J^{\fp}_{t,T}(x),\quad x\in\bX
\end{align}
in \cref{lem:JpInftyBasic} below, which in term reveals the relationship between $J^{\fp}_{t,\infty}$ and $J^{\fp}_{t+1,\infty}$. Moreover, \cref{lem:JpInftyBasic} together with \cref{lem:sigmatTConv} implies that
\begin{align}\label{eq:rhoJEquiv}
\sigma^{\fX^\fp}_{0,\infty}(\fC^\fp) = J^{\fp}_{0,\infty}\quad\text{and}\quad\sigma^{\fX^\fp}_{t,X_t,\infty}(\fC^\fp) = J^{\fp}_{t,\infty}(X^{\fp}_t),\quad\bP\text{-a.s.},\, t\in\bN.
\end{align}
\begin{lemma}\label{lem:JpInftyBasic}
Suppose that $\gamma\in[0,1)$ and $(\sigma_{t,x})_{(t,x)\in\bN\times\bX}$ is normalized. Under Assumption \ref{asmp:Main} (iv), for any $\fp\in\Pi$, we have $(J^\fp_{0,T})_{T\in\bN}$ converges and $(J^{\fp}_{t,T})_{T\in\set{t,t+1,\dots}}$ converges uniformly for $t\in\bN$. Moreover,
\begin{align}\label{eq:JHJ}
J^{\fp}_{t,\infty} := \lim_{T\to\infty}J^{\fp}_{t,T} = \begin{cases}
H^{\fp}_t J^{\fp}_{t+1,\infty}, & t\in\bN,
\\[0.5em]
H_0 J^{\fp}_{1,\infty}, & t = 0.
\end{cases}
\end{align}
Lastly, $\sup_{t\in\bN}\|J^\fp_{t,\infty}\|_\infty\le\frac{b}{1-\gamma}$.
\end{lemma}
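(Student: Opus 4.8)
The plan is to exploit the $\gamma$-contraction property of \cref{lem:GContr}, which transfers verbatim to $H^\fp_t$ and $H_0$, together with the completeness of $\big(\ell^\infty(\bX,\cB(\bX)),\|\cdot\|_\infty\big)$. Since $H^\fp_t v(x)=G^{\pi_t(x)}_t v(x)$, \cref{lem:GContr} yields $\|H^\fp_t v^1-H^\fp_t v^2\|_\infty\le\gamma\|v^1-v^2\|_\infty$ for every $t\in\bN$, and likewise $|H_0 v^1-H_0 v^2|\le\gamma\|v^1-v^2\|_\infty$. As a preliminary step I would record the a priori bound: since $J^\fp_{T+1,T}=O$, \cref{lem:GBasic}~(a) (here we use that $(\sigma_{t,x})$ is normalized) gives $\|J^\fp_{T,T}\|_\infty\le b$, and a backward induction on $t$ gives $\|J^\fp_{t,T}\|_\infty\le b\sum_{k=0}^{T-t}\gamma^{k}\le\frac{b}{1-\gamma}$ for all $t\le T$; one more application through $H_0$ gives $|J^\fp_{0,T}|\le\frac{b}{1-\gamma}$.

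Next I would prove convergence in sup norm. Fix $t\in\bN$ and let $T\ge t$, $r\in\bN$. Unrolling the recursion \eqref{eq:DefJtTp} down to index $T$ gives $J^\fp_{t,T}=H^\fp_t\circ\cdots\circ H^\fp_T\,O$ and $J^\fp_{t,T+r}=H^\fp_t\circ\cdots\circ H^\fp_T\,J^\fp_{T+1,T+r}$, so applying the contraction bound for $H^\fp_t,\dots,H^\fp_T$ successively,
\begin{align*}
\big\|J^\fp_{t,T}-J^\fp_{t,T+r}\big\|_\infty\le\gamma^{\,T-t+1}\,\big\|O-J^\fp_{T+1,T+r}\big\|_\infty\le\frac{\gamma^{\,T-t+1}}{1-\gamma}\,b ,
\end{align*}
where the last inequality is the preliminary bound. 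Hence $(J^\fp_{t,T})_{T\ge t}$ is Cauchy in the complete space $\ell^\infty(\bX,\cB(\bX))$, so it converges in sup norm to a limit $J^\fp_{t,\infty}$, which is bounded by $\frac{b}{1-\gamma}$ and $\cB(\bX)$-measurable (each $J^\fp_{t,T}$ is measurable by \cref{lem:GMeasurability}, and pointwise limits preserve measurability). For $t=0$, applying the contraction of $H_0$ to the Cauchy sequence $(J^\fp_{1,T})_{T}$ shows that $(J^\fp_{0,T})_{T}$ converges as well.

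Finally, the recursion \eqref{eq:JHJ} is obtained by letting $T\to\infty$ in the identity $J^\fp_{t,T}=H^\fp_t J^\fp_{t+1,T}$ (valid for $T\ge t+1$): since $H^\fp_t$ is a $\gamma$-contraction, hence continuous on $\ell^\infty(\bX,\cB(\bX))$, and $J^\fp_{t+1,T}\to J^\fp_{t+1,\infty}$ in sup norm, the right-hand side converges to $H^\fp_t J^\fp_{t+1,\infty}$, giving $J^\fp_{t,\infty}=H^\fp_t J^\fp_{t+1,\infty}$; the case $t=0$ is identical using $J^\fp_{0,T}=H_0 J^\fp_{1,T}$ and continuity of $H_0$. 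The bound $\sup_{t\in\bN}\|J^\fp_{t,\infty}\|_\infty\le\frac{b}{1-\gamma}$ was already recorded in the preliminary step and passes to the limit. The argument is essentially routine; the only point that needs care is that the Cauchy estimate be genuinely uniform in $r$, which is exactly why the a priori bound $\|J^\fp_{T+1,T+r}\|_\infty\le\frac{b}{1-\gamma}$ — controlling the tail of the composition being compared against $O$ independently of $r$ — must be established before running the contraction argument.
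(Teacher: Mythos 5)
Your proof is correct and follows essentially the same route as the paper's: an a priori bound $\|J^\fp_{t,T}\|_\infty\le b/(1-\gamma)$ from normalization and \cref{lem:GBasic}~(a), a Cauchy estimate obtained by composing the $\gamma$-contractions of \cref{lem:GContr}, and the recursion \eqref{eq:JHJ} by passing to the limit in $J^\fp_{t,T}=H^\fp_t J^\fp_{t+1,T}$ using continuity of $H^\fp_t$ (the paper writes this last step as an explicit three-term triangle inequality, but it is the same argument). No gaps.
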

\begin{proof} 
Let $r\in\bN$. First, by \cref{asmp:Main} (iv) and \cref{lem:GBasic} (a), we have 
\begin{align*}
H^\fp_{T+r}O(x) = G^{\pi_{T+r}(x)}_{T+r}O(x) \in [-b,b],\quad x\in\bX,
\end{align*}
i.e., $\|H^{\fp}_{T+r} O\|_\infty \le b$. Then, by \cref{asmp:Main} (iv) and \cref{lem:GBasic} (a),
\begin{align*}
H^{\fp}_{T+r-1}\circ H^{\fp}_{T+r} O(x) = G^{\pi_{T+r-1}(x)}_{T+r-1} \circ H^{\fp}_{T+r} O (x) \in [-b-\gamma b, \;b+\gamma b],
\end{align*}
i.e., $\|H^{\fp}_{T+r-1}\circ H^{\fp}_{T+r} O\|_\infty \le b + \gamma b$. By induction, we have $\|H^{\fp}_{T+1}\circ\cdots\circ H^{\fp}_{T+r} O\|_\infty \le (1-\gamma)^{-1} b$. Then, by \cref{lem:GContr} we obtain
\begin{align*}
\left|H^{\fp}_T O(x) - H^{\fp}_{T}\circ \cdots \circ H^{\fp}_{T+r} O(x)\right| &= \left|G^{\pi_T(x)}_TO(x) - G^{\pi_T(x)}_T \circ H^{\fp}_{T+1}\circ \cdots \circ H^{\fp}_{T+r} O (x)\right|
\\
&\le \gamma\|O-H^{\fp}_{T+1}\circ \cdots \circ H^{\fp}_{T+r}O\|_\infty 
\\
&\le \frac{\gamma}{1-\gamma} b, \quad x\in\bX,
\end{align*}
which implies that $\left\| H^{\fp}_T O - H^{\fp}_{T}\circ \cdots \circ H^{\fp}_{T+r} O\right\|_\infty \le \frac{\gamma}{1-\gamma} b$. Inducing backward with \cref{lem:GContr}, we obtain that
\begin{align*}
&\left\|H^{\fp}_t \circ\cdots\circ H^{\fp}_T O - H^{\fp}_t \circ\cdots\circ H^{\fp}_{T} \circ H^{\fp}_{T+r} O\right\|_\infty\\
&\quad \le \left|G^{\pi_T(x)}_T\circ H^{\fp}_{t+1} \circ\cdots\circ H^{\fp}_T O (x) - G^{\pi_T(x)}_T \circ H^{\fp}_{t+1}\circ \cdots \circ H^{\fp}_{T+r} O (x)\right|\\
&\quad\le \gamma\left\|H^{\fp}_{t+1} \circ\cdots\circ H^{\fp}_T O (x) - G^{\pi_T(x)}_T\circ H^{\fp}_{t+1}\circ \cdots \circ H^{\fp}_{T+r} O \right\|_\infty \le \frac{\gamma^{T-t}}{1-\gamma} b.
\end{align*}
The above proves that $(J^{\fp}_{t,T})_{T\in\bN}$ converges uniformly. It also proves that $\sup_{t\in\bN}\|J^\fp_{t,\infty}\|\le\frac{b}{1-\gamma}$. 

Lastly, we prove \eqref{eq:JHJ}. For this, consider $t\in\bN$. In view of \eqref{eq:DefJtTp}, \cref{lem:GContr} and the uniform convergence proved above, we have
\begin{align*}
\|J^{\fp}_{t,\infty} - H^{\fp}_t J^{\fp}_{t+1,\infty}\|_\infty &\le \|J^{\fp}_{t,\infty} - J^{\fp}_{t,T}\|_\infty + \|J^{\fp}_{t,T}-H^{\fp}_t J^{\fp}_{t+1,T}\|_\infty + \|H^{\fp}_t J^{\fp}_{t+1,T}-H^{\fp}_t J^{\fp}_{t+1,\infty}\|_\infty\\
& \le  (1+\gamma) \|J^{\fp}_{t,\infty}-J^{\fp}_{t,T}\|_\infty \xrightarrow[T\to\infty]{} 0.
\end{align*}
This proves the case of $t\in\bN$ in \eqref{eq:JHJ}. The case of $t=0$ can be proved similarly.
\end{proof}

Next, we introduce below a proposition regarding the value of a Markovian policy $\fp$.
First, let $\ell^\infty(\bN;\bX,\cB(\bX))$ be the set of $\fv=(v_t)_{t\in\bN}$ such that $v_t:(\bX,\cB(\bR))\to(\bR,\cB(\bR))$ for $t\in\bN$ and $\|\fv\|_\infty:=\sup_{t\in\bN}\|v_t\|_\infty<\infty$. For $\fv\in\ell^\infty(\bN;\bX,\cB(\bX))$, we define $\fH^{\fp} \fv := (H^{\fp}_t v_{t+1})_{t\in\bN}$.  We then have the following.
\begin{proposition}\label{prop:PolicyEval}
Suppose that $\gamma\in[0,1)$ and $(\sigma_{t,x})_{(t,x)\in\bN\times\bX}$ is normalized. Under \cref{asmp:Main} (iv), for any Markovian policy $\fp\in\Pi$, $\fH^{\fp}$ is a $\gamma$-contraction mapping on $\ell^\infty(\bN;\bX,\cB(\bX))$. Moreover, $(J^{\fp}_{t,\infty})_{t\in\bN}$ is the unique fixed point of $\fH^{\fp}$.
\end{proposition}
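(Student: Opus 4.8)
The plan is to recognize $\fH^\fp$ as a $\gamma$-contraction on the complete space $\ell^\infty(\bN;\bX,\cB(\bX))$ and then to identify its unique fixed point with $(J^\fp_{t,\infty})_{t\in\bN}$ via \cref{lem:JpInftyBasic}. First I would check that $\fH^\fp$ is a well-defined self-map: measurability of each coordinate $x\mapsto H^\fp_t v_{t+1}(x)=G^{\pi_t(x)}_t v_{t+1}(x)$ is furnished by \cref{lem:GMeasurability}, and since $(\sigma_{t,x})_{(t,x)\in\bN\times\bX}$ is normalized, \cref{lem:GBasic} (a) gives $|H^\fp_t v_{t+1}(x)|\le b+\gamma\|v_{t+1}\|_\infty$ for all $x\in\bX$, so $\|\fH^\fp\fv\|_\infty\le b+\gamma\|\fv\|_\infty<\infty$. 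Hence $\fH^\fp$ indeed maps $\ell^\infty(\bN;\bX,\cB(\bX))$ into itself.

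Next I would establish the contraction estimate. For $\fv^1,\fv^2\in\ell^\infty(\bN;\bX,\cB(\bX))$ and each fixed $t\in\bN$, applying \cref{lem:GContr} with $\lambda=\pi_t(x)$ and then taking the supremum over $x\in\bX$ gives $\|H^\fp_t v^1_{t+1}-H^\fp_t v^2_{t+1}\|_\infty\le\gamma\|v^1_{t+1}-v^2_{t+1}\|_\infty\le\gamma\|\fv^1-\fv^2\|_\infty$; taking the supremum over $t\in\bN$ yields $\|\fH^\fp\fv^1-\fH^\fp\fv^2\|_\infty\le\gamma\|\fv^1-\fv^2\|_\infty$. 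Since $\gamma<1$ and $\ell^\infty(\bN;\bX,\cB(\bX))$ is complete (as recorded in \cref{sec:Setup}), the Banach fixed point theorem supplies a unique fixed point $\fv^\star$.

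It remains to identify $\fv^\star$. By \cref{lem:JpInftyBasic}, each $J^\fp_{t,\infty}$ is a uniform limit of the measurable functions $J^\fp_{t,T}$, hence measurable, and $\sup_{t\in\bN}\|J^\fp_{t,\infty}\|_\infty\le b/(1-\gamma)<\infty$, so $(J^\fp_{t,\infty})_{t\in\bN}\in\ell^\infty(\bN;\bX,\cB(\bX))$; moreover the relation \eqref{eq:JHJ} reads precisely $J^\fp_{t,\infty}=H^\fp_t J^\fp_{t+1,\infty}$ for every $t\in\bN$, i.e.\ $\fH^\fp(J^\fp_{t,\infty})_{t\in\bN}=(J^\fp_{t,\infty})_{t\in\bN}$, so by uniqueness $\fv^\star=(J^\fp_{t,\infty})_{t\in\bN}$. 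The argument is essentially bookkeeping on top of \cref{lem:GBasic}, \cref{lem:GMeasurability}, \cref{lem:GContr} and \cref{lem:JpInftyBasic}; the only step meriting a little care is confirming that $\fH^\fp$ genuinely returns values in $\ell^\infty(\bN;\bX,\cB(\bX))$ — both the coordinatewise measurability and the uniform norm bound — so that the contraction mapping theorem is legitimately applicable, and this is exactly what the combination of \cref{lem:GMeasurability} and \cref{lem:GBasic} (a) delivers.
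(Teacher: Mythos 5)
Your proposal is correct and follows essentially the same route as the paper: self-map property from \cref{lem:GBasic} and \cref{lem:GMeasurability}, the $\gamma$-contraction from \cref{lem:GContr}, membership of $(J^{\fp}_{t,\infty})_{t\in\bN}$ in $\ell^\infty(\bN;\bX,\cB(\bX))$ and the fixed-point identity from \cref{lem:JpInftyBasic} and \eqref{eq:JHJ}, with uniqueness from the Banach fixed point theorem on the complete space. Your write-up is just a slightly more explicit version of the paper's argument, spelling out the norm bound $b+\gamma\|\fv\|_\infty$ and the coordinatewise contraction estimate.
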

\begin{proof}
In view of \cref{lem:GBasic} and \cref{lem:GMeasurability}, $\fH^\fp$ indeed maps from $\ell^\infty(\bN;\bX,\cB(\bX))$ to $\ell^\infty(\bN;\bX,\cB(\bX))$, and it is a $\gamma$-contraction due to \cref{lem:GContr}.  As pointwise convergence preserves measurability (cf. \cite[Section 4.6, Lemma 4.29]{Aliprantis2006book}), by combining \cref{lem:GMeasurability} with the convergence and the bound established in \cref{lem:JpInftyBasic}, we have $(J^{\fp}_{t,\infty})_{t\in\bN}\in\ell^\infty(\bN;\bX,\cB(\bX))$. It follows from \eqref{eq:JHJ} that $(J^{\fp}_{t,\infty})_{t\in\bN}$ is a fixed point of $\fH^\fp$.
\end{proof}

To establish the infinite horizon DPP, we define the infinite horizon version of the optimal value function as 
\begin{align}\label{eq:DefJStartInfty}
J^*_{0,\infty}:=\lim_{T\to\infty}J^*_{0,T},\quad\text{and}\quad J^{*}_{t,\infty}(x):=\lim_{T\to\infty}J^{*}_{t,T}(x),\quad x\in\bX,\,t\in\bN.
\end{align}
The definition is justified by the following lemma.
\begin{lemma}\label{lem:JstarInftyUnifConv}
Suppose that $\gamma\in[0,1)$ and $(\sigma_{t,x})_{(t,x)\in\bN\times\bX}$ is normalized. Under Assumption \ref{asmp:Main}, for any $t\in\bN$, the sequence $(J^{*}_{t,T})_{T> t}$ converges uniformly and $\sup_{t\in\bN}\left\|J^{*}_{t,\infty}\right\|_\infty \le \frac{b}{1-\gamma}$. Moreover, $(J^*_{0,T})_{T\in\bN}$ converges and $J^*_{0,\infty} \le \frac{\gamma b}{1-\gamma}$.
\end{lemma}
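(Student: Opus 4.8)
**Proof plan for Lemma 6.4 (the $J^*$-convergence statement).**

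The plan is to mimic closely the structure of the proof of \cref{lem:JpInftyBasic}, but working with the Bellman operator $S_t$ in place of $H_t^\fp$. First I would observe that by \cref{asmp:Main}(iv) and \cref{lem:GBasic}(a), since each $\sigma_{t,x}$ is normalized, we have $\|S_t O\|_\infty\le b$, and more generally $\|S_{T+1}\circ\cdots\circ S_{T+r}O\|_\infty\le b+\gamma b+\cdots+\gamma^{r-1}b\le (1-\gamma)^{-1}b$ by an easy induction (the monotonicity and translation properties of $G_t^\lambda$ pass through the infimum, so $S_t$ itself is monotone, translation-equivariant, and maps the ball of radius $c$ into the ball of radius $b+\gamma c$). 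The well-definedness of all these iterates rests on \cref{lem:JtTLSC}, which guarantees lower semicontinuity — hence measurability — of every $J^*_{t,T}$.

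The key quantitative step is a contraction estimate for $S_t$: for $v^1,v^2\in\ell^\infty(\bX,\cB(\bX))$ one has $\|S_t v^1 - S_t v^2\|_\infty\le\gamma\|v^1-v^2\|_\infty$. This follows from \cref{lem:GContr} by the standard fact that taking an infimum over a common index set $\varpi_t(x)$ is nonexpansive: $|S_t v^1(x)-S_tv^2(x)| = |\inf_\lambda G_t^\lambda v^1(x) - \inf_\lambda G_t^\lambda v^2(x)|\le\sup_\lambda|G_t^\lambda v^1(x)-G_t^\lambda v^2(x)|\le\gamma\|v^1-v^2\|_\infty$. With this in hand, I would reproduce verbatim the telescoping argument of \cref{lem:JpInftyBasic}: first bound $\|S_T O - S_T\circ\cdots\circ S_{T+r}O\|_\infty = \|S_T O - S_T(S_{T+1}\circ\cdots\circ S_{T+r}O)\|_\infty\le\gamma\|O - S_{T+1}\circ\cdots\circ S_{T+r}O\|_\infty\le\frac{\gamma}{1-\gamma}b$, then induct backward through $S_t\circ\cdots\circ S_T$ to get $\|J^*_{t,T}-J^*_{t,T+r}\|_\infty\le\frac{\gamma^{T-t}}{1-\gamma}b$ for all $r$. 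This Cauchy-in-$T$ bound, uniform in $x$, gives uniform convergence of $(J^*_{t,T})_{T>t}$; letting $r\to\infty$ in $\|J^*_{t,T}\|_\infty\le(1-\gamma)^{-1}b$ (itself from the ball-mapping property) gives $\sup_t\|J^*_{t,\infty}\|_\infty\le\frac{b}{1-\gamma}$. For the $t=0$ statement, $J^*_{0,T}=H_0 J^*_{1,T}$, and since $H_0$ is a $\gamma$-contraction by \cref{lem:GContr} and $C_0\equiv 0$, we get $J^*_{0,T}=H_0 J^*_{1,T}$ converges; moreover $\|H_0 v\|_\infty\le\gamma\|v\|_\infty$ (because $H_0 O = \sigma_0(\delta_0)=0$ by normalization, then apply the contraction with $v^2=O$), so $|J^*_{0,\infty}|=|H_0 J^*_{1,\infty}|\le\gamma\|J^*_{1,\infty}\|_\infty\le\frac{\gamma b}{1-\gamma}$, which is the claimed bound.

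The only mild obstacle is bookkeeping: one must be careful that $S_t$ genuinely inherits monotonicity, translation-equivariance, and the contraction from $G_t^\lambda$ uniformly over $\lambda\in\varpi_t(x)$, and that all iterates $S_t\circ\cdots\circ S_T O$ remain in $\ell^\infty(\bX,\cB(\bX))$ — the latter being exactly where \cref{lem:JtTLSC} is invoked. Beyond that, the argument is a routine transcription of \cref{lem:JpInftyBasic} with $H_t^\fp\rightsquigarrow S_t$, and I would state it as such to avoid repetition. I expect no genuine difficulty; the proof in the paper should be short and say "proceed exactly as in the proof of \cref{lem:JpInftyBasic}, replacing $H^\fp_t$ by $S_t$ and using that an infimum over a fixed index set preserves the contraction property of \cref{lem:GContr}."
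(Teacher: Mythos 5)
Your proposal is correct and follows essentially the same route as the paper: the paper likewise establishes $\|J^*_{t,T}\|_\infty\le b/(1-\gamma)$ by backward induction via \cref{lem:GBasic}(a), and obtains the Cauchy estimate $\|J^*_{t,T}-J^*_{t,T+r}\|_\infty\le \gamma^{T-t}b/(1-\gamma)$ from exactly your contraction step $\bigl|\inf_\lambda G^\lambda_t v^1 - \inf_\lambda G^\lambda_t v^2\bigr|\le\sup_\lambda\bigl|G^\lambda_t v^1-G^\lambda_t v^2\bigr|\le\gamma\|v^1-v^2\|_\infty$ combined with \cref{lem:GContr}, treating the $t=0$ case by the same argument through $H_0$.
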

\begin{proof}
Fix $t\in\bN$ for the remainder of this proof. First, we show that for any $t<T$, 
\begin{align}\label{eq:JBound}
\left\|J^{*}_{t,T}\right\|_\infty \le \frac{b}{1-\gamma}.
\end{align}
To this end, observe that $\left\|J^{*}_{T-1,T}\right\|_\infty\le b$ because $\|G^\lambda_{T-1}O\|_\infty\le b$ due to Assumption \ref{asmp:Main} (iv) and Lemma \ref{lem:GBasic} (a). By Assumption \ref{asmp:Main} (iv) and Lemma \ref{lem:GBasic} (a) again, $\left\|G^\lambda_{T-2}J^{*}_{T-1,T}\right\|_\infty\le b+\gamma b$ and thus $\left\|J^{*}_{T-2,T}\right\|_\infty \le b + \gamma b$. Then, \eqref{eq:JBound} follows by induction. Next, note for any $t\le T$, by Lemma \ref{lem:GContr},
\begin{align}\label{eq:DiffJIndc}
\left|J^{*}_{t,T}(x) - J^{*}_{t,T+r}(x)\right| &= \left|\inf_{\lambda\in\varpi_t(x)}G^{\lambda}_{t} J^{*}_{t+1,T}(x) - \inf_{\lambda\in\varpi_t(x)}G^{\lambda}_{t}J^{*}_{t+1,T+r}(x)\right|
\nonumber\\
&\le \sup_{\lambda\in\Lambda}\left|G^{\lambda}_{t} J^{*}_{t+1,T}(x) - G^{\lambda}_{t} J^{*}_{t+1,T+r}(x)\right| \le \gamma\left\|J^{*}_{t+1,T} - J^{*}_{t+1,T+r}\right\|_\infty.
\end{align}
In view of \eqref{eq:DefJstartT}, \eqref{eq:JBound} and \eqref{eq:DiffJIndc} together imply $\|J^{*}_{t,T} - J^{*}_{t,T+r}\|_\infty \le \frac{b}{1-\gamma}$ for $t=T$. Pulling $t$ backward with \eqref{eq:DiffJIndc}, we obtain
\begin{align*}
\left|J^{*}_{t,T}(x) - J^{*}_{t,T+r}(x)\right| \le \gamma\left\|J^{*}_{t+1,T} - J^{*}_{t+1,T+r}\right\|_\infty \le \gamma^{T-t}\|J^{*}_{T,T}(x) - J^{*}_{T,T+r}(x)\|_\infty \le \frac{\gamma^{T-t}b}{1-\gamma}.
\end{align*}
The above proves the uniform convergence of $(J^{*}_{t,T})_{T> t}$. By combining the uniform convergence with \eqref{eq:JBound}, we have that $\left\|J^{*}_{t,\infty}\right\|_\infty \le \frac{b}{1-\gamma}$. The proof for $J^*_{0,\infty}$ follows similarly.
\end{proof}

The following lower semi-continuity of $J^*_{t,\infty}$ is an immediate consequence of \cref{lem:JtTLSC} and \cref{lem:JstarInftyUnifConv}.
\begin{lemma}\label{lem:JstarInftyLSC}
Suppose that $\gamma\in[0,1)$ and $(\sigma_{t,x})_{(t,x)\in\bN\times\bX}$ is normalized. Under Assumption \ref{asmp:Main}, for any $t\in\bN$, $x\mapsto J^{*}_{t,\infty}(x)$ is lower semi-continuous on $\bX$. 
\end{lemma}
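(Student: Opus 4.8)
The plan is to deduce the lower semi-continuity of $J^*_{t,\infty}$ directly from the two ingredients already established: each finite-horizon value function $J^*_{t,T}$ (for $T>t$) is lower semi-continuous by \cref{lem:JtTLSC}, and $(J^*_{t,T})_{T>t}$ converges to $J^*_{t,\infty}$ \emph{uniformly} on $\bX$ by \cref{lem:JstarInftyUnifConv}. The only additional fact I would invoke is the elementary one that a uniform limit of lower semi-continuous functions is itself lower semi-continuous.

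Concretely, I would fix $t\in\bN$, $x_0\in\bX$ and $\epsilon>0$, use \cref{lem:JstarInftyUnifConv} to pick $T>t$ with $\|J^*_{t,T}-J^*_{t,\infty}\|_\infty<\epsilon/2$, and then estimate
\[
\liminf_{x\to x_0}J^*_{t,\infty}(x)\ \ge\ \liminf_{x\to x_0}J^*_{t,T}(x)-\frac{\epsilon}{2}\ \ge\ J^*_{t,T}(x_0)-\frac{\epsilon}{2}\ \ge\ J^*_{t,\infty}(x_0)-\epsilon,
\]
where the first and last inequalities use $|J^*_{t,T}-J^*_{t,\infty}|\le\epsilon/2$ pointwise, and the middle inequality uses the lower semi-continuity of $J^*_{t,T}$ at $x_0$ from \cref{lem:JtTLSC}. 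Letting $\epsilon\downarrow0$ gives $\liminf_{x\to x_0}J^*_{t,\infty}(x)\ge J^*_{t,\infty}(x_0)$, and since $x_0\in\bX$ is arbitrary this is the claim. (An equivalent route is via sublevel sets, writing $\{J^*_{t,\infty}\le c\}$ as a countable intersection of closed sets built from the $\{J^*_{t,T}\le\,\cdot\,\}$, but the $\epsilon$–$\liminf$ version is cleaner.)

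There is essentially no obstacle here: the lemma is an immediate corollary of \cref{lem:JtTLSC} and \cref{lem:JstarInftyUnifConv}, exactly as announced in the text preceding the statement. The one point worth recording carefully is that it is \emph{uniform} convergence, not merely pointwise convergence, that makes the conclusion automatic — a pointwise limit of lower semi-continuous functions need not be lower semi-continuous unless the convergence is monotone increasing — and \cref{lem:JstarInftyUnifConv} supplies precisely the uniform convergence required.
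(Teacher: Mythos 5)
Your proof is correct and follows exactly the route the paper takes: the paper simply declares the lemma ``an immediate consequence'' of \cref{lem:JtTLSC} and \cref{lem:JstarInftyUnifConv}, and your $\epsilon$--$\liminf$ argument is the standard verification that a uniform limit of lower semi-continuous functions is lower semi-continuous, which is precisely what is being left implicit. Your remark that uniform (rather than merely pointwise) convergence is the essential ingredient is also accurate.
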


We are now in position to present the DPP for the infinite horizon optimization in \eqref{eq:main}. We recall the definition of $S_t$ from \eqref{eq:DefS}.
\begin{theorem}\label{thm:InfiniteDPP}
Suppose that $\gamma\in[0,1)$ and $(\sigma_{t,x})_{(t,x)\in\bN\times\bX}$ is normalized. Under Assumption \ref{asmp:Main}, the following is true.
\begin{enumerate}
\item[(a)] $(J^{*}_{t,\infty})_{t\in\bN}$ satisfies $J^*_{t,\infty}=S_t J^*_{t+1,\infty}$ for $t\in\bN$. Moreover, for $(J'_{t,\infty})_{t\in\bN}\in\ell^\infty(\bN;\bX,\cB(\bX))$ satisfying $J'_{t,\infty}=S_t J'_{t+1,\infty}$,
we have $(J'_{t,\infty})_{t\in\bN}=(J^*_{t,\infty})_{t\in\bN}$.
\item[(b)] For $t\in\bN$ and $x\in\bX$, $\argmin_{\lambda\in\varpi_t(x)}G^{\lambda}_t J^{*}_{t+1,\infty}(x)$ is not empty and closed, and there is a measurable $\pi^*_t:(\bX,\cB(\bX))\to(\Lambda,\cE(\Lambda))$ such that 
\begin{align}\label{eq:pistar}
\pi^*_t(x)\in\argmin_{\lambda\in\varpi_t(x)}G^{\lambda}_t J^{*}_{t+1,\infty}(x),\quad x\in\bX.
\end{align}
\item[(c)] 
For $\fp^*=(\pi^*_t)_{t\in\bN}$ satisfying \eqref{eq:pistar} for all $t\in\bN$, we have $J^{\fp^*}_{t,\infty} = J^{*}_{t,\infty} = \inf_{\fp\in\Pi} J^{\fp}_{t,\infty}$ for $t\in\bN$, and $J^{\fp^*}_{0,\infty} = H_0 J^{*}_{1,\infty} = \inf_{(\fX^\fp,\fA^\fp)\in\Psi}\rho^{\fX^\fp}_{0,\infty}(\fC^\fp)$.
\end{enumerate}
\end{theorem}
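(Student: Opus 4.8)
My plan is to prove the three parts in order, leveraging the finite-horizon machinery of \cref{thm:FiniteDPP} together with the uniform-convergence and contraction estimates of \cref{lem:JstarInftyUnifConv}, \cref{lem:JpInftyBasic}, and \cref{lem:GContr}. For part (a), I would first pass to the limit in the finite-horizon recursion $J^*_{t,T}=S_tJ^*_{t+1,T}$. The key observation is that $S_t$ is $\gamma$-Lipschitz in the sup norm uniformly over $t$: from \cref{lem:GContr} one gets $\|S_tv^1-S_tv^2\|_\infty\le\gamma\|v^1-v^2\|_\infty$ exactly as in \eqref{eq:DiffJIndc}. Since $J^*_{t+1,T}\to J^*_{t+1,\infty}$ uniformly (\cref{lem:JstarInftyUnifConv}), we get $S_tJ^*_{t+1,T}\to S_tJ^*_{t+1,\infty}$ uniformly, while the left side $J^*_{t,T}\to J^*_{t,\infty}$; hence $J^*_{t,\infty}=S_tJ^*_{t+1,\infty}$. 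For uniqueness: if $(J'_{t,\infty})_{t\in\bN}\in\ell^\infty(\bN;\bX,\cB(\bX))$ also satisfies $J'_{t,\infty}=S_tJ'_{t+1,\infty}$, then I would introduce the operator $\fS$ on $\ell^\infty(\bN;\bX,\cB(\bX))$ by $(\fS\fv)_t:=S_tv_{t+1}$, note it is a $\gamma$-contraction on this Banach space (analogously to \cref{prop:PolicyEval}), so it has a unique fixed point; both $(J^*_{t,\infty})_t$ and $(J'_{t,\infty})_t$ are fixed points, hence equal. (One should also check $(J^*_{t,\infty})_t\in\ell^\infty(\bN;\bX,\cB(\bX))$, which follows from the bound $\sup_t\|J^*_{t,\infty}\|_\infty\le b/(1-\gamma)$ in \cref{lem:JstarInftyUnifConv}, measurability being inherited through pointwise limits via \cref{lem:GMeasurability} and \cite[Section 4.6, Lemma 4.29]{Aliprantis2006book}.)

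For part (b), the argument is essentially identical to \cref{thm:FiniteDPP}(a), now applied with the lower semi-continuous function $J^*_{t+1,\infty}$ supplied by \cref{lem:JstarInftyLSC} in place of $J^*_{t+1,T}$. Specifically, \cref{lem:GvLSC} gives that $(x,\lambda)\mapsto G^\lambda_tJ^*_{t+1,\infty}(x)$ is lower semi-continuous; \cref{asmp:Main}(ii) gives that $\varpi_t(x)$ is compact and $\varpi_t$ upper hemi-continuous; hence $\argmin_{\lambda\in\varpi_t(x)}G^\lambda_tJ^*_{t+1,\infty}(x)$ is nonempty and closed, the measurable-selection argument via \cref{lem:InffLSC} and the Kuratowski--Ryll-Nardzewski theorem produces the required $\pi^*_t$, and \cref{lem:sigmaAlgBE} upgrades $\cB(\Lambda)$-measurability to $\cE(\Lambda)$-measurability.

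For part (c), fix $\fp^*=(\pi^*_t)_{t\in\bN}$ as in (b). By part (a) and the selection property \eqref{eq:pistar}, $J^*_{t,\infty}=S_tJ^*_{t+1,\infty}=G^{\pi^*_t(x)}_tJ^*_{t+1,\infty}(x)=H^{\fp^*}_tJ^*_{t+1,\infty}(x)$, i.e.\ $(J^*_{t,\infty})_{t\in\bN}$ is a fixed point of $\fH^{\fp^*}$; since by \cref{prop:PolicyEval} $(J^{\fp^*}_{t,\infty})_{t\in\bN}$ is the \emph{unique} fixed point of $\fH^{\fp^*}$, we conclude $J^{\fp^*}_{t,\infty}=J^*_{t,\infty}$ for all $t\in\bN$, and likewise $J^{\fp^*}_{0,\infty}=H_0J^*_{1,\infty}$. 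For the lower bound over arbitrary $\fp\in\Pi$: from \cref{lem:GBasic}(a) and the definitions \eqref{eq:DefJtTp}, \eqref{eq:DefJstartT} one has $J^*_{t,T}\le J^{\fp}_{t,T}$ for every $T$; letting $T\to\infty$ and using the definitions \eqref{eq:DeftInftyP}, \eqref{eq:DefJStartInfty} gives $J^*_{t,\infty}\le J^{\fp}_{t,\infty}$, so $J^*_{t,\infty}=\inf_{\fp\in\Pi}J^{\fp}_{t,\infty}$ and similarly at $t=0$. Finally, to identify $J^{\fp^*}_{0,\infty}$ with $\inf_{(\fX^\fp,\fA^\fp)\in\Psi}\rho^{\fX^\fp}_{0,\infty}(\fC(\fX^\fp,\fA^\fp))$ over \emph{all} history-dependent policies in $\Psi$, I would invoke \cref{prop:MarkovControlOpt} (the infinite-horizon analogue of \cref{prop:MarkovControlOptFinite}) together with \eqref{eq:rhoJEquiv}, which shows no history-dependent policy beats the Markovian optimum.

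The main obstacle I anticipate is part (a): being careful that the interchange of $\lim_{T\to\infty}$ with the operator $S_t$ is legitimate. This is not automatic because $S_t$ involves an infimum over $\lambda\in\varpi_t(x)$ and the inner $\sigma_{t,x}$ is only lower semi-continuous, not continuous; the resolution is that we never need continuity of $\sigma_{t,x}$ here, only the \emph{uniform} $\gamma$-Lipschitz bound for $S_t$ coming from \cref{lem:GContr} via the monotonicity and translation-invariance of \cref{def:DCRM}, which survives the infimum. Once that contraction estimate is in hand, everything else is a routine assembly of previously established lemmas.
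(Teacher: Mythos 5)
Your proposal is correct and follows essentially the same route as the paper: part (a) passes to the limit in $J^*_{t,T}=S_tJ^*_{t+1,T}$ using the uniform convergence of \cref{lem:JstarInftyUnifConv} and the $\gamma$-Lipschitz estimate for $S_t$ inherited from \cref{lem:GContr}, with uniqueness obtained from the same contraction inequality applied to the two bounded fixed-point sequences; part (b) repeats the lower semi-continuity and Kuratowski--Ryll-Nardzewski selection argument of \cref{thm:FiniteDPP}(a) with $J^*_{t+1,\infty}$ from \cref{lem:JstarInftyLSC}; and part (c) identifies $J^{\fp^*}_{t,\infty}$ with $J^*_{t,\infty}$ via the unique fixed point of $\fH^{\fp^*}$ in \cref{prop:PolicyEval} and passes the finite-horizon comparison $J^*_{t,T}\le J^\fp_{t,T}$ to the limit. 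The only cosmetic difference is that you phrase uniqueness in (a) through a global contraction operator $\fS$ on $\ell^\infty(\bN;\bX,\cB(\bX))$ rather than the paper's direct supremum inequality, but since you only compare two given fixed points the argument reduces to the same estimate.
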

\begin{proof}
\textbf{(a)} First, observe that by \eqref{eq:DefJstartT}, Lemma \ref{lem:JstarInftyUnifConv} and \eqref{eq:SContr}, we have 
\begin{align*}
\left\|J^{*}_{t,\infty} - S_t J^{*}_{t+1,\infty}\right\|_\infty \le \left\|J^{*}_{t,\infty} -  J^{*}_{t,T}\right\|_\infty + 0 + \left\|S_tJ^{*}_{t+1,T} - S_t J^{*}_{t+1,\infty}\right\|_\infty \xrightarrow[T\to\infty]{} 0.
\end{align*}
The same is true for $(J'_{t,\infty})_{t\in\bN}$. Next, by Lemma \ref{lem:GContr}, we have
\begin{align}\label{eq:SContr}
\left|J^*_{t,\infty}(x) - J'_{t,\infty}(x)\right| &= \left|S_t J^*_{t+1,\infty}(x) - S_t J'_{t+1,\infty}(x)\right| = \left|\inf_{\lambda\in\varpi_t(x)}G^{\lambda}_t J^*_{t+1,\infty}(x) - \inf_{\lambda\in\varpi_t(x)}G^{\lambda}_t J'_{t+1,\infty}(x)\right|
\nonumber\\
&\le \sup_{\lambda\in\Lambda}\left|G^{\lambda}_t J^*_{t+1,\infty}(x) - G^{\lambda}_t J'_{t+1,\infty}(x)\right|
\nonumber
\\
&\le \gamma\;\|J^*_{t+1,\infty}-J'_{t+1,\infty}\|_\infty\,,
\qquad\qquad\qquad \forall\;(t,x)\in\bN\times\bX.
\end{align}
Therefore,
\begin{align}\label{eq:JstarJprime}
\sup_{t\in\bN}\|J^*_{t,\infty}-J'_{t,\infty}\|_\infty \le \gamma\sup_{t\in\bN}\|J^*_{t+1,\infty}-J'_{t+1,\infty}\|_\infty \le \gamma\sup_{t\in\bN}\|J^*_{t,\infty}-J'_{t,\infty}\|_\infty.
\end{align}
We note that $\sup_{t\in\bN}\|J^*_{t,\infty}-J'_{t,\infty}\|_\infty<\infty$ due to \cref{lem:JstarInftyUnifConv} and the hypothesis that $(J'_{t,\infty})_{t\in\bN}\in\ell^\infty(\bN;\bX,\cB(\bX))$. This together with \eqref{eq:JstarJprime} implies $\sup_{t\in\bN}\|J^*_{t,\infty}-J'_{t,\infty}\|_\infty=0$. The proof of statement (a) is complete. 

\textbf{(b)} 
By Lemma \ref{lem:GvLSC} and Lemma \ref{lem:JstarInftyLSC}, $(x,\lambda) \mapsto G^{\lambda}_t J^{*}_{t+1}(x)$ is lower semi-continuous. The remainder of the proof follows the same reasoning as in the proof of \cref{thm:FiniteDPP} (a).

\textbf{(c)} By \cref{lem:GBasic} (a), \eqref{eq:DefJtTp} and \eqref{eq:DefJstartT}, we obtain $J^{*}_{t,T}(x) \le J^{\fp}_{t,T}(x)$ for any $t\le T$, $x\in\bX$ and $\fp\in\Pi$. Then, by \cref{lem:JpInftyBasic} and \cref{lem:JstarInftyUnifConv}, we have $J^{*}_{t,\infty}(x) \le J^{\fp}_{t,\infty}(x)$ for any $t\in\bN$, $x\in\bX$ and $\fp\in\Pi$. On the other hand, note that by (a) and (b) we have
\begin{align*}
J^*_{t,\infty} = S_{t} J^{*}_{t+1,\infty} = G^{\pi^*_t(x)}_t J^{*}_{t+1,\infty}(x).
\end{align*}
It follows from Proposition \ref{prop:PolicyEval} that $J^{\fp^*}_{t,\infty} = J^{*}_{t,\infty}$ for $t\in\bN$. Consequently, we have that $J^{\fp^*}_{t,\infty} = J^{*}_{t,\infty} = \inf_{\fp\in\Pi} J^{\fp}_{t,\infty}$ for $t\in\bN$. Finally, this together with \eqref{eq:DefH0}, \eqref{eq:rhoJEquiv} and \eqref{eq:JHJ} implies $J^{\fp^*}_{0,\infty} = H_0 J^{*}_{1,\infty} = \inf_{\fp\in\Pi}\rho^{\fX^\fp}_{0,\infty}(\fC^\fp)$.
\end{proof}

\begin{remark}\label{rmk:Stationary}
When the transition kernel $P$, the action domain $\cA_t$, and the family of probability measures $\cM_t$ are constant in $t\in\bN$, it follows immediately from \eqref{eq:DefS} and \eqref{eq:DefJstartT} that $S_t$ is constant in $t\in\bN$ and $J^*_{t,T}=J^*_{t+1,T+1}$ for $t\in\bN$ and $t\le T$. Thus, by \cref{lem:JstarInftyUnifConv}, $J^*_t$ is also constant in $t$. The stationary versions of \cref{thm:InfiniteDPP} following immediately.
\end{remark}

To conclude this section, we argue that the Markovian $\fp^*$ introduced in \cref{thm:InfiniteDPP} (b) is also optimal among all history-dependent policies.
\begin{proposition}\label{prop:MarkovControlOpt}
Suppose that $\gamma\in[0,1)$ and $(\sigma_{t,x})_{(t,x)\in\bN\times\bX}$ is normalized. Under \cref{asmp:Main}, for any $(\fX,\fA)\in\Psi$ we have that
$$\inf_{(\fX^\fp,\fA^\fp)\in\Psi}\varsigma^{\fX^\fp}_{0,\infty}(\fC^\fp) \le \varsigma^{\fX}_{0,\infty}(\fC).$$
\end{proposition}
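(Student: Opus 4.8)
The plan is to derive the infinite-horizon statement from its finite-horizon counterpart \cref{prop:MarkovControlOptFinite} by a limiting argument, rather than rerun a backward induction that cannot be started ``at $t=\infty$''. Fix $(\fX,\fA)\in\Psi$. Since $\|C_t\|_\infty\le b$ for every $t$ by \cref{asmp:Main} (iv), the cost stream $\fC(\fX,\fA)$ is uniformly bounded, so \cref{lem:sigmatTConv} applies: the sequence $\big(\varsigma^{\fX}_{0,T}(\fC(\fX,\fA))\big)_{T\in\bN}$ converges and, by the very definition \eqref{eq:DefsigmatInfty}, its limit is $\varsigma^{\fX}_{0,\infty}(\fC(\fX,\fA))$.

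Next I would record, for each fixed $T\in\bN$, the finite-horizon inequality $\varsigma^{\fX}_{0,T}(\fC(\fX,\fA))\ge J^*_{0,T}$; this is exactly \cref{prop:MarkovControlOptFinite} combined with \cref{thm:FiniteDPP} (b), which identifies $J^*_{0,T}$ with $\inf_{(\fX^\fp,\fA^\fp)\in\Psi}\varsigma^{\fX^\fp}_{0,T}(\fC(\fX^\fp,\fA^\fp))$, and crucially the same process $(\fX,\fA)\in\Psi$ may be used for every horizon $T$. Letting $T\to\infty$ and recalling from \cref{lem:JstarInftyUnifConv} and the definition \eqref{eq:DefJStartInfty} that $J^*_{0,T}\to J^*_{0,\infty}$, I obtain
\begin{align*}
\varsigma^{\fX}_{0,\infty}(\fC(\fX,\fA)) \;=\; \lim_{T\to\infty}\varsigma^{\fX}_{0,T}(\fC(\fX,\fA)) \;\ge\; \lim_{T\to\infty} J^*_{0,T} \;=\; J^*_{0,\infty}.
\end{align*}
To close the argument I would invoke \cref{thm:InfiniteDPP} (c), which gives $J^*_{0,\infty}=H_0J^*_{1,\infty}=\inf_{(\fX^\fp,\fA^\fp)\in\Psi}\varsigma^{\fX^\fp}_{0,\infty}(\fC(\fX^\fp,\fA^\fp))$; here the first equality holds because $J^*_{0,T}=H_0J^*_{1,T}$, $H_0$ is $\gamma$-Lipschitz by \cref{lem:GContr}, and $J^*_{1,T}\to J^*_{1,\infty}$ uniformly by \cref{lem:JstarInftyUnifConv}. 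Chaining the last two displays yields $\inf_{(\fX^\fp,\fA^\fp)\in\Psi}\varsigma^{\fX^\fp}_{0,\infty}(\fC(\fX^\fp,\fA^\fp))=J^*_{0,\infty}\le\varsigma^{\fX}_{0,\infty}(\fC(\fX,\fA))$, which is the claim.

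I do not expect a genuine obstacle; the only delicate points are bookkeeping. One must confirm that the finite-horizon bound $\varsigma^{\fX}_{0,T}(\fC(\fX,\fA))\ge J^*_{0,T}$ holds uniformly in $T$ for a single fixed $(\fX,\fA)\in\Psi$ — which it does, since \cref{prop:MarkovControlOptFinite} concerns an arbitrary fixed process in $\Psi$ and an arbitrary horizon — and that each passage to the limit is licensed by an already established convergence result (\cref{lem:sigmatTConv} on the left, \cref{lem:JstarInftyUnifConv} on the right). An equivalent route would be to propagate the induction of \cref{prop:MarkovControlOptFinite} at each finite horizon to get $\varsigma^{\fX}_{t,X_t,T}(\fC(\fX,\fA))\ge J^*_{t,T}(X_t)$ for all $t\le T$ and only then send $T\to\infty$ at each time index via \eqref{eq:sigmatInftyRecursive} and \eqref{eq:DefJStartInfty}, but this is merely a reorganisation of the same estimates.
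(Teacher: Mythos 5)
Your proposal is correct and follows essentially the same route as the paper's proof: establish the finite-horizon inequality $J^*_{0,T}\le\varsigma^{\fX}_{0,T}(\fC(\fX,\fA))$ via \cref{thm:FiniteDPP} (b) and \cref{prop:MarkovControlOptFinite}, pass to the limit $T\to\infty$ using \cref{lem:sigmatTConv} and \cref{lem:JstarInftyUnifConv}, and identify the limit with the infinite-horizon infimum via \cref{thm:InfiniteDPP} (c). Your extra remark justifying $J^*_{0,\infty}=H_0J^*_{1,\infty}$ through the $\gamma$-Lipschitz property of $H_0$ is a harmless additional detail the paper leaves implicit.
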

\begin{proof}
Fix $(\fX,\fA)\in\Psi$ for the remainder of the proof. In view of \cref{thm:InfiniteDPP} (c), it is sufficient to prove that $J^{*}_{0,\infty} \le \varsigma^{\fX}_{0,\infty}(\fC)$. For this, note that by \cref{thm:FiniteDPP} (b) and \cref{prop:MarkovControlOptFinite}, $J^*_{0,T} \le \varsigma^{\fX}_{0,T}(\fC)$. In view of \cref{lem:sigmatTConv} and \cref{lem:JstarInftyUnifConv}, letting $T\to\infty$, we conclude the proof.
\end{proof}

\section{Optimality of deterministic Markovian action}\label{sec:SuffDeter}

As a final key result, we provide a sufficient condition such that the optimal policies are deterministic mappings of state to action, which extends the discussion in \cite{Delage2019Dicesion} from the static case to dynamic case. The crux of this result is quite straightforward: if $\sigma_{t,x}$ penalizes uncertainty from mixtures\footnote{Indeed, the mixture $\theta\mu_1+(1-\theta)\mu_2$ arguably possesses a higher degree of uncertainty than one of $\mu_1$ or $\mu_2$. This can be intuitively understood by considering that $\theta\mu_1+(1-\theta)\mu_2$ corresponds to the probability of drawing from either $\mu_1$ or $\mu_2$, determined by the toss of a coin biased by the factor $\theta$.}, then deterministic actions may potentially reach the optimal outcome. For instance, when $\sigma_{t,x}$ stems from spectral risk measures, the conditions of \cref{prop:wpSingleton} are satisfied and the optimal policies are deterministic. For further details, we refer to \cref{rmk:CommentM}.
\begin{proposition}\label{prop:wpSingleton}
Let $t\in\bN$. Suppose \cref{asmp:Main} holds and
\begin{align*}
\sigma_{t,x}(\theta\mu_1+(1-\theta)\mu_2) \ge \min\left\{\sigma_{t,x}(\mu_1), \sigma_{t,x}(\mu_2)\right\}, \quad x\in\bX,
\end{align*}
for any $\mu_1,\mu_2\in\cP_b(\bR)$ and $\theta\in[0,1]$. Then, for any $T\in\bN$ with $T> t$, there is $\pi^\delta_t:(\bX,\cB(\bX))\to(\Lambda,\cE(\Lambda))$ such that $\pi^\delta_t(x)$ is a Dirac measure for all $x\in\bX$ and $$\pi^\delta_t(x)\in\argmin_{\lambda\in\varpi_t(x)}G^{\lambda}_t J^{*}_{t+1,T}(x).$$
If in addition, $\gamma\in(0,1)$ and that $(\sigma_{t,x})_{(t,x)\in\bN\times\bX}$ is normalized, then  the above statements hold for $T=\infty$.
\end{proposition}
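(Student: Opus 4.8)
\textbf{Proof proposal for \cref{prop:wpSingleton}.}

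The plan is to exploit the recursive structure of $G^\lambda_t$ together with the fact that for fixed $x$ the map $\lambda \mapsto \wt P^v_{t,x,\lambda}$ is affine in $\lambda$. First I would unwind the definition: for $v = J^*_{t+1,T}$ (a bounded measurable function, lower semi-continuous by \cref{lem:JtTLSC} in the finite case), and for a measure $\lambda = \theta\lambda_1 + (1-\theta)\lambda_2 \in \varpi_t(x)$, the formula \eqref{eq:DefPtilde} gives $\wt P^v_{t,x,\lambda} = \theta\,\wt P^v_{t,x,\lambda_1} + (1-\theta)\,\wt P^v_{t,x,\lambda_2}$, i.e.\ the distribution of $C_t(x,A,Y)+\gamma v(Y)$ is a $\theta$-mixture of the two corresponding distributions. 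Applying the hypothesis $\sigma_{t,x}(\theta\mu_1+(1-\theta)\mu_2)\ge\min\{\sigma_{t,x}(\mu_1),\sigma_{t,x}(\mu_2)\}$, we get $G^\lambda_t v(x)\ge\min\{G^{\lambda_1}_t v(x), G^{\lambda_2}_t v(x)\}$, so mixing never strictly helps. By induction (or directly, since any $\lambda\in\varpi_t(x)$ is a mixture of Dirac measures $\delta_a$, $a\in\cA_t(x)$) this gives $\inf_{\lambda\in\varpi_t(x)} G^\lambda_t v(x) = \inf_{a\in\cA_t(x)} G^{\delta_a}_t v(x)$.

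Next I would establish attainment of the infimum over Dirac measures by a \emph{measurable} selection. From \cref{thm:FiniteDPP}(a) (resp.\ \cref{thm:InfiniteDPP}(b) in the $T=\infty$ case) we already know $\argmin_{\lambda\in\varpi_t(x)}G^\lambda_t J^*_{t+1,T}(x)$ is nonempty and closed and that $(x,\lambda)\mapsto G^\lambda_t J^*_{t+1,T}(x)$ is lower semi-continuous (using \cref{lem:GvLSC} and \cref{lem:JtTLSC}/\cref{lem:JstarInftyLSC}). Restricting to the closed set $\{\delta_a : a\in\cA_t(x)\}\subseteq\varpi_t(x)$ — equivalently, working with the weakly measurable, compact-valued, upper hemi-continuous correspondence $x\mapsto\cA_t(x)$ and the lower semi-continuous integrand $a\mapsto G^{\delta_a}_t J^*_{t+1,T}(x)$ — the correspondence $x\mapsto\argmin_{a\in\cA_t(x)}G^{\delta_a}_t J^*_{t+1,T}(x)$ is nonempty, closed-valued and $\cB(\bX)$-measurable (same argument as in the proof of \cref{thm:FiniteDPP}(a): $B_F=\{x: S_tJ^*_{t+1,T}(x)=\min_{a\in F}G^{\delta_a}_tJ^*_{t+1,T}(x)\}\in\cB(\bX)$ by lower semi-continuity and \cref{lem:InffLSC}). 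Then the Kuratowski--Ryll-Nardzewski selection theorem yields a measurable $\alpha_t:(\bX,\cB(\bX))\to(\bA,\cB(\bA))$ with $\alpha_t(x)\in\argmin_{a\in\cA_t(x)}G^{\delta_a}_tJ^*_{t+1,T}(x)$; setting $\pi^\delta_t(x):=\delta_{\alpha_t(x)}$ gives a $\cB(\bX)$-$\cE(\Lambda)$ measurable map (the map $a\mapsto\delta_a$ being measurable, cf.\ the footnote on $\Pi_t$ being nonempty) with $\pi^\delta_t(x)\in\varpi_t(x)$ and, by the first step, $\pi^\delta_t(x)\in\argmin_{\lambda\in\varpi_t(x)}G^\lambda_tJ^*_{t+1,T}(x)$.

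For the $T=\infty$ case under the additional hypotheses $\gamma\in(0,1)$ and $(\sigma_{t,x})$ normalized, I would simply replace $J^*_{t+1,T}$ by $J^*_{t+1,\infty}$ throughout: it is bounded by \cref{lem:JstarInftyUnifConv}, lower semi-continuous by \cref{lem:JstarInftyLSC}, and \cref{thm:InfiniteDPP}(a),(b) supply exactly the analogues of the facts used above, so the mixture inequality and the selection argument go through verbatim.

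The main obstacle I anticipate is not the mixture inequality — which is essentially immediate once one notes affineness of $\lambda\mapsto\wt P^v_{t,x,\lambda}$ — but making the reduction ``$\inf$ over $\varpi_t(x)$ equals $\inf$ over Diracs'' precise while simultaneously guaranteeing \emph{joint measurability} of the resulting selection in $x$. One must be careful that the correspondence $x\mapsto\{\delta_a: a\in\cA_t(x)\}$ inherits the right regularity (closed values, measurability/weak measurability) from $\cA_t$, and that the lower semi-continuity of $(x,a)\mapsto G^{\delta_a}_tv(x)$ transfers correctly from the lower semi-continuity of $(x,\lambda)\mapsto G^\lambda_t v(x)$ under the embedding $a\mapsto\delta_a$ (which is continuous for the weak topology); these are the points where the technical lemmas of \cref{app:Lemmas} and the measurable-selection machinery must be invoked with some care.
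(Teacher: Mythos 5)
Your overall architecture matches the paper's: the binary mixture inequality $G^{\theta\lambda^1+(1-\theta)\lambda^2}_t v(x)\ge\min\{G^{\lambda^1}_tv(x),G^{\lambda^2}_tv(x)\}$ via affineness of $\lambda\mapsto\wt P^v_{t,x,\lambda}$, followed by a Kuratowski--Ryll-Nardzewski selection exactly as in the proof of \cref{thm:FiniteDPP}(a). Both of those pieces are fine.

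The genuine gap is the step you dismiss as ``essentially immediate'': the claim that $\inf_{\lambda\in\varpi_t(x)}G^\lambda_t v(x)=\inf_{a\in\cA_t(x)}G^{\delta_a}_t v(x)$, justified by ``induction (or directly, since any $\lambda$ is a mixture of Dirac measures).'' Induction on the quasi-concavity inequality only handles \emph{finite} convex combinations, whereas a general $\lambda\in\varpi_t(x)$ is a continuous mixture of Diracs, and a min-of-components bound for binary mixtures does not pass to continuous mixtures without an additional limiting argument (this is the same reason quasi-concave functions need not satisfy a Jensen-type inequality for barycenters). This reduction is in fact the bulk of the paper's proof: one takes a minimizer $\lambda^*$ (which exists by compactness of $\varpi_t(x)$ and lower semi-continuity), uses total boundedness of $\cA_t(x)$ to partition it into finitely many sets of diameter at most $\varepsilon$, applies the binary decomposition $\lambda^*=\lambda^*(A)\,\lambda^*_A+\lambda^*(A^c)\,\lambda^*_{A^c}$ finitely many times to locate a minimizer $\lambda^m$ supported in a ball of radius $1/m$, extracts a convergent subsequence of centers $a^m\to a^0$, verifies that $\lambda^m\to\delta_{a^0}$ weakly, and finally invokes \cref{lem:GvLSC} to get $\min_\lambda G^\lambda_t J^*_{t+1,T}(x)=\liminf_m G^{\lambda^m}_tJ^*_{t+1,T}(x)\ge G^{\delta_{a^0}}_tJ^*_{t+1,T}(x)$. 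The same weak-convergence-plus-lsc argument is also what gives closedness of the set of optimal Dirac actions, which you need for the measurable selection. Your last paragraph correctly flags that the Dirac reduction needs care, but locates the difficulty in measurability rather than in the analytic passage from finite to continuous mixtures; as written, the proof is incomplete at precisely that point.
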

\begin{proof}
We present the proof for the case of $T<\infty$, as proof for the case of $T=\infty$ follows along the same reasoning. To begin, note that, by \eqref{eq:DefG}, 
\begin{align}\label{eq:GlambdaIneq}
G^{\theta\lambda^1+(1-\theta)\lambda^2}_tv(x) &= \sigma_{t,x}\left(\wt P^v_{t,x,\theta\lambda_1+(1-\theta)\lambda_2}\right) = \sigma_{t,x}\left(\theta\wt P^v_{t,x,\lambda_1} + (1-\theta)\wt P^v_{t,x,\lambda_2}\right) \nonumber\\
&\ge \min\left\{\sigma_{t,x}\left(\wt P^v_{t,x,\lambda_1}\right), \sigma_{t,x}\left(\wt P^v_{t,x,\lambda_2}\right)\right\} = \min\left\{G^{\lambda^1}_tv(x), G^{\lambda^2}_tv(x)\right\}.
\end{align}
for any $v\in\ell^\infty(\bX,\cB(\bX))$, $\lambda^1,\lambda^2\in\Lambda$ and $\theta\in(0,1)$. 

It is sufficient to show that for any $x\in\bX$ the set 
$$D(x):=\set{a\in\cA_t(x):G^{\delta_a}_t J^{*}_{t+1,T}(x)=\min_{\lambda\in\varpi_t(x)}G^{\lambda}_t J^{*}_{t+1,T}(x)}$$ 
is not empty and closed. Once we demonstrate this, we can apply an argument similar to that used in the proof of Theorem \ref{thm:FiniteDPP} (a) to affirm the existence of $\pi^\delta_t$. 

Let $\lambda^*\in\argmin_{\lambda\in\varpi_t(x)}G^{\lambda}_t J^{*}_{t+1,\infty}(x)$. For $A\in\cB(\bA)$, with the convention that $\frac 00=0$, we define 
\begin{align*}
\lambda^*_{A}:=\frac{\lambda^*(A^\varepsilon_k\cap\,\cdot\,)}{\lambda^*(A^\varepsilon_k)}.
\end{align*}
and claim that 
\begin{align}\label{eq:lambdaAargmin}
\left\{\lambda^*_A, \lambda^*_{A^c}\right\}\cap\argmin_{\lambda\in\varpi_t(x)}G^{\lambda}_t J^{*}_{t+1,T}(x) \neq \emptyset.
\end{align}
To see this, we note that if $\lambda^*_A=1$ or $\lambda^*_{A^c}=1$, we must have either $\lambda^*=\lambda^*_A$ or $\lambda^*=\lambda^*_{A^c}$, and \eqref{eq:lambdaAargmin} follows immediately. If none of $\lambda^*_A=1$ and $\lambda^*_{A^c}=1$ is true, then $\lambda^*_A$ and $\lambda^*_{A^c}$ are bona fide probabilities, and $\lambda^*=\lambda^*(A)\,\lambda^*_{A}+\lambda^*(A^c)\,\lambda^*_{A^c}$. This together with \eqref{eq:GlambdaIneq} proves \eqref{eq:lambdaAargmin}. 

Since $\cA_t(x)$ is compact (due to \cref{asmp:Main} (ii)) thus totally bounded (cf. \cite[Section 3.7, Theorem 3.28]{Aliprantis2006book}), i.e., for any $\varepsilon>0$ there exist finitely many $\varepsilon$-open balls that covers $\cA_t(x)$. 
For $\varepsilon>0$, we consider $n_\varepsilon\in\bN$ and a partition $(A^\varepsilon_k)_{k=1}^{n_\varepsilon}$ of $\cA_t(x)$ such that the radius of $A^\varepsilon_k$ is no greater than $\varepsilon$. Invoking \eqref{eq:lambdaAargmin} repeatedly, there is $k=1,\dots,n_\varepsilon$ such that $\lambda^*_{A^\varepsilon_k}$ attains the optimal. Consequently, for any $m\in\bN$, there is $a^m\in\cA_t(x)$ and $\lambda^m\in\varpi_t(x)$ such that $\supp\lambda^m\subseteq \cA_t(x)\cap \overline B_{\frac1m}(a^m)$ and $\lambda^m\in\argmin_{\lambda\in\varpi_t(x)}G^{\lambda}_t J^{*}_{t+1,T}(x)$, where $\overline B_{\frac1m}(a^m)$ is the closed $\frac1m$-ball centered at $a^m$. As $\cA_t(x)$ is compact, without loss of generality, we assume $(a^m)_{m\in\bN}$ converges and set $a^0:=\lim_{m\to\infty}a^m$. Let $f\in C_b(\bA)$ and note $f$ is uniformly continuous on $\cA_t(x)$. Then, we have
\begin{align*}
\left|\int_\bA f(a)\lambda^m(\dif a) - f(a^0)\right| &= \left|\int_{\cA_t(x)\cap\overline B_{\frac1m}(a^m)} f(a)\lambda^m(\dif a) - f(a^0)\right| 
\\
&\le \sup_{a\in \cA_t(x)\cap\overline B_{\frac1m}(a^m)}|f(a)-f(a^0)| 
\\
&\le \sup_{a\in \cA_t(x)\cap\overline B_{\frac1m}(a^m)}|f(a)-f(a^m)| + |f(a^m)-f(a^0)|\; \xrightarrow[m\to\infty]{} \;0,
\end{align*}
i.e., $(\lambda^m)_{m\in\bN}$ converges to $\delta_{a^0}$ weakly. By \cref{lem:GvLSC} and \cref{lem:JstarInftyLSC}, we obtain
\begin{align*}
\min_{\lambda\in\varpi_t(x)}G^{\lambda}_t J^{*}_{t+1,T}(x) = \liminf_{m\to\infty} G^{\lambda^m}_t J^*_{t+1,T}(x) \ge G^{\delta_{a^0}}_t J^*_{t+1,T}(x),
\end{align*}
and thus $\delta_{a^0}\in D(x)$. Along with the fact that $(\delta_{a^\ell})_{\ell\in\bN}$ converges weakly if $(a^\ell)_{\ell\in\bN}\subset\bA$ converges, the closedness of $D(x)$ follows from \cref{lem:GvLSC} and \cref{lem:JstarInftyLSC} again. 
\end{proof}

\section{Examples}\label{sec:Examples}
In this section, we present several examples that illustrate the application of the theory developed in the previous sections. In \cref{subsec:ExampleSigma}, we discuss a few possible choices for $\sigma_{t,x}$ and verify the technical assumptions required for the DPP to be applicable. Subsequently, in \cref{subsec:FiniteExample}, we provide a finite horizon example focusing on optimal liquidation with limit order books. Finally, an infinite horizon example on autonomous driving is presented in \cref{subsec:InfiniteExample}. 

In general, risk-averse MDPs seldom admit closed-form solutions, and we typically rely on numerical methods to approximate the optimal policy. The numerical methods for solving risk-averse MDPs, particularly in a model-agnostic context, are currently under development. We refer to, e.g., \cite{tamar2015policy, Huang2017Risk, Kose2021Risk, coache2023reinforcement, coache2023conditionally},  and the reference therein for the recent progress. We  also provide some discussion on the relevant methods at the end of \cref{subsec:FiniteExample} and \cref{subsec:InfiniteExample}.

\subsection{Examples of $\sigma_{t,x}$}\label{subsec:ExampleSigma}
Below we present two examples of $\sigma_{t,x}$. The first one is essentially the distributional version of entropic risk measure. The second one draws inspiration from Kusuoka representation for law-invariant convex risk measure (cf. \cite[Theorem 2.1]{Jouini2006Law}).

\subsubsection{Entropic risk measure}
Consider $\tau:\bN\times\bX\to(0,\infty)$ and suppose that for $t\in\bN$, $\tau(t,\cdot)$ is continuous. For $(t,x)\in\bN\times\bX$, we define
\begin{align}\label{eq:DefEntropic}
\sigma^E_{t,x}(\mu) := \frac{1}{\tau(t,x)} \log\int_{\bR} \exp\big(\tau(t,x) y\big)
\;\mu(\dif y),\qquad \mu\in\cP_b(\bR).
\end{align}
This may be interpreted as an entropic risk measure where the risk-aversion parameter $\tau$ varies with time and state.
The verification that $\sigma^E_{t,x}$ with an arbitrarily fixed $(t,x)\in\bN\times\bX$ adheres to \cref{def:DCRM} can be established by drawing from the argument that demonstrates the conventional entropic risk measure as a convex risk measure (cf. \cite[Example 6.20]{Shapiro2021book}). To verify \cref{asmp:Main} (c), let $(x^n)_{n\in\bN}\subseteq\bX$ converge to $x^0\in\bX$ and $(\mu^n)_{n\in\bN}\subset\cP_b(K)$ converge to $\mu^0\in\cP_b(K)$, where $K\subset\bR$ is compact. Note that $\lim_{n\to\infty}\tau(t,x^n)^{-1} = \tau(t,x^0)^{-1}$. Moreover, since $(x,y)\mapsto\exp\big(-\tau(t,x)\, y\big)$ is continuous on $K$, by \cref{lem:ConvVaryingMeas} we have
\begin{align*}
\liminf_{n\to\infty} \int_{\bR} \exp\big(-\tau(t,x^n)\, y\big)\mu^n(\dif y) 
&= \liminf_{n\to\infty} \int_{K} \exp\big(-\tau(t,x^n) y\big)\mu^n(\dif y) \\
&\ge \int_{K} \exp\big(-\tau(t,x^0) y\big)\mu^0(\dif y) 
= \int_{\bR} \exp\big(-\tau(t,x^n) y\big)\mu^0(\dif y),
\end{align*}
By combining the above, we have verified \cref{asmp:Main} (c). Finally, we note that $\sigma^E_{t,x}$ is normalized.

\begin{remark}
It follows from the concavity of $\log$ that for any $\theta\in(0,1)$ and $\mu_1,\mu_2\in\cP(\bR)$, 
\begin{align*}
\sigma^E_{t,x}(\theta\mu_1+(1-\theta)\mu_2) \ge  \theta \sigma^E_{t,x}(\mu_1) + (1-\theta) \sigma^E_{t,x}(\mu_2).
\end{align*}
Considering \eqref{eq:DefG}, this inequality shows that utilizing entropic risk measures, $\sigma^E_{t,x}$, promotes randomized actions as optimizers. 
\end{remark}

\subsubsection{Kusuoka-type risk measure}\label{subsec:ExampleKusuoka}
Inspired by the Kusuoka representation of law invariant convex risk measure (cf. \cite[Theorem 7]{Frittelli2005Law}), we introduce the following Kusuoka-type convex risk measure at the level of distributions, where the set of spectral risk measures are allowed to vary with state. 

Let $\cP([0,1])$ be the set of probability measures on $\cB([0,1])$, endowed with the weak topology. For $t\in\bN$, we let $\cM_t:(\bX,\cB(\bX))\to 2^{\bM}$ be non-empty, closed-valued, and weakly measurable, that is, $\set{x\in\bX:\cM_t(x)\cap U\neq\emptyset}\in\cB(\bX)$ for any open $U\subseteq\bM$. Suppose $\cM_t$ is lower hemi-continuous\footnote{By \cite[Section 17.2, Lemma 17.5 and Section 18.1, Lemma 18.2]{Aliprantis2006book}), the lower hemi-continuity of $\cM_t$ implies weak measurability.}, that is, at any $x\in\bX$ for every open $U_\bM\subset\bM$ such that $U_\bM\cap\cM_t(x)\neq\emptyset$ there is an open $U_\bX\subseteq\bX$ such that $z\in U_\bX$ implies $U_\bM\cap\cM_t(z)\neq\emptyset$. Moreover, let $\beta_{t,x}:(\bM,\cB(\bM))\to\bR$ and assume that $(x,\eta)\mapsto\beta_{t,x}(\eta)$ is upper semi-continuous. For $(t,x)\in\bN\times\bX$, we define
\begin{align}\label{eq:DefKusuoka}
\sigma^K_{t,x}(\mu) := \sup_{\eta\in\cM_t(x)} \left\{ \int_{[0,1]} \overline\avar_{\kappa}(\mu)\,\eta(\dif\kappa) - \beta_{t,x}(\eta) \right\}, \quad \mu\in\cP_b(\bR).
\end{align} 
Note that $\sigma^K_{t,x}$ is normalized if and only if $\sup_{n\in\cM_t(x)}\set{-\beta(\eta)}=0$.  Using the properties of $\avar$ (cf. \cite[Section 6.2.4]{Shapiro2021book}), it is straightforward to check that, for $(t,x)\in\bN\times\bA$ arbitrarily fix, $\sigma_{t,x}$ satisfies \cref{def:DCRM}. We  use \cref{lem:SupIntLSC} to verify \cref{asmp:Main} (iii). To this end let $K\subset\bR$ be compact and consider 
\begin{align*}
f((x,\mu),\kappa) =  \overline\avar_{\kappa}(\mu), \quad \text{and} \quad \phi(x,\mu)=\cM_t(x) ,
\end{align*}
for $(x,\mu,\kappa)\in\bX\times\cP(K)\times[0,1]$. Note that $\phi$ is constant in $\mu$, thus lower hemi-continuous due to the definition of $\cM_t$. Since $f$ is constant in $x$, by \cref{rmk:DCRM}, $(x,\mu)\mapsto f((x,\mu),\kappa)$ is lower semi-continuous for any $\kappa\in[0,1]$. By \cref{lem:avarReg}, for any $\mu\in\cP_b(\bR)$, $\kappa\mapsto\avar_\kappa(\mu)$ is non-increasing and continuous on $[0,1]$, thus $\kappa\mapsto f((x,\mu),\kappa)$ is non-increasing and continuous on $[0,1]$ for any $(x,\mu)\in\bX\times\cP(K)$. By invoking \cref{lem:SupIntLSC}, we confirm that $\sigma^K_{t,x}$ defined in \eqref{eq:DefKusuoka} \cref{asmp:Main} (iii).

In view of \eqref{eq:DefKusuoka}, when $\cM_t(x)$ is a singleton, $\sigma^K_{t,x}$ represents the distributional version of spectral risk measure (cf. \cite{Acerbi2002Spectral}, \cite[Section 4.6]{Follmer2016book}). In general, \eqref{eq:DefKusuoka} permits $\cM_t(x)$ to extend beyond being merely a singleton. This flexibility enables us to incorporate a broader range of convex risk measures into a dynamic context.

\begin{remark}\label{rmk:CommentM}
In this remark, we provide some discussions on the impact of $\cM_t(x)$ on optimal actions.
\begin{itemize}
\item We claim that if $\cM_t$ is a singleton, then a deterministic Markovian action at time $t$ may achieve the optimal outcome. In view of \cref{prop:MarkovControlOpt}, the following observations are sufficient. Note that for any $\kappa\in(0,1]$, $\mu_1,\mu_2\in\cP_b(\bR)$ and $\theta\in[0,1]$,  
\begin{align*}
&\overline\avar_\kappa(\theta\mu_1+(1-\theta)\mu_2) 
\\
\quad&= \inf_{q\in\bR}\left\{  q + \kappa^{-1}\int_{\bX}\left(z-q\right)_+ (\theta\mu_1+(1-\theta)\mu_2)(\dif z) \right\}
\\
\quad&\ge \inf_{q\in\bR}\left\{  \theta q + \theta\kappa^{-1}\int_{\bX}\left(z-q\right)_+ \mu_1(\dif z) \right\} + \inf_{q\in\bR}\left\{  (1-\theta)q + (1-\theta)\kappa^{-1}\int_{\bX}\left(z-q\right)_+ \mu_2(\dif z) \right\} 
\\
\quad& = \theta\avar_\kappa(\mu_1) + (1-\theta)\avar_\kappa(\mu_2) \\
\quad &\ge \min\left\{\avar_\kappa(\mu_1),\avar_\kappa(\mu_2)\right\}.
\end{align*}
As for $\kappa=0$, note that $(\theta\mu_1+(1-\theta)\mu_2)((r,\infty))=0$ implies $\mu_i((r,\infty))=0$ for $i=1,2$, and thus
\begin{align*}
\overline\avar_0(\theta\mu_1+(1-\theta)\mu_2) \ge \min\left\{\avar_0(\mu_1), \avar_0(\mu_2)\right\}
\end{align*}
\item 
If $\cM_t$ is not a singleton, analogous to the scenario in Appendix \ref{app:Example}, there exists certain circumstances where randomized actions are necessary to achieve optimality. The potential need for randomized action can be elucidated using the following two-player game heuristic. Recall the definitions of $\wt P^v_{t,x,\lambda}$ from equation \eqref{eq:DefPtilde}. When considering $\inf_{\lambda\in\varpi_t(x)}\sigma^{K}_{t,x}\left(\wt P^v_{t,x,\lambda}\right)$, we can interpret $\sup_{\eta\in\cM_t(x)}$ as an adversarial agent who controls $\eta$. According to the order of $\inf$ and $\sup$, this adversarial agent has access to $\lambda$. However, she must pick $\eta$ without the knowledge of the realized action. For the original agent who controls $\lambda$, under certain circumstances, it is reasonable to employ randomized action to counteract the adversary. Lastly, it's important to note that, unlike the entropic risk measure, the Kusuoka-type risk measure does not always encourage randomized action, even if $\cM_t$ is not a singleton. The way different choices of $\cM_t$ affect the optimal action seems to be quite complex and will be explored in future work.
\end{itemize}
\end{remark}

\subsection{A finite horizon example: optimal liquidation}\label{subsec:FiniteExample}

In this section, we showcase an example that demonstrates optimal liquidation with limit order book utilizing the proposed risk-averse MDP. We would draw attention to several works that offer different perspectives on trading with limit order books \cite{gueant2013dealing, cartea2014buy, Bayraktar2014Liquidation,cartea2015risk, cartea2015optimal, Jacquier2018Optimal, Fouque2022Optimal, Nadtochiy2023Consistency}. We also refer to \cite{Cartea2015book} for an overview of the clearing mechanism of limit order books. As all stock market activities occur discretely (e.g., tick size, integer trading volume), we construct the example with discrete state and action spaces. With discrete state and action spaces, \cref{asmp:Main} is automatically satisfied. On the other hand, taking into account the inherent numeric structure of stock prices and other factors present in the stock market, formulating the problem into an MDP with continuous state and action spaces could provide sensible approximations. Doing so, however,  demands more technicalities and will be studied elsewhere.

Consider a single stock market. We assume zero interest rate and we accordingly set $\gamma=1$. Suppose an agent wishes to liquidate $u_0\in\bN$ amount of shares by the terminal time $T\in\bN$. For simplicity, let $\bS=\set{0,1,\dots,N_S}\subset \bN_0$ be the set of possible values of the price of a stock. 
At each time $t\in\set{1,\dots,T}$, potential sellers and buyers post their intended trading prices and volumes on the market. This  results in a limit order book, $m=(m^0,m^1,\dots,m^{N_S})$, with each entry being an integer reflecting the aggregated volume submitted for trading at the corresponding price. Positive integers indicate a collective intention to sell while negative integers indicate a collective intention to buy. For example, $m^1=10$ signifies a collective intention to sell $10$ units at price $1$; $m^0=-3$ signifies a collective intention to buy $3$ units at price $0$. A valid limit order book $m$ must be such that there is a $\check i\in\set{0,0.5,1.5,\dots,N_S-0.5,N_S}$ with $m_j \le 0$ for all $j\in\set{0,1,\dots,N_S}$ and $j<\check i$ and $m_j\ge 0$ for all $j\in\set{0,1,\dots,N_S}$ with $j>\check i$. It is reasonable to assume that $m^0=-\infty$. We thus let $\bM\subset (\{-\infty\}\cup\bZ)^{N_S+1}$ be the set of valid limit order books with $m^0=-\infty$ -- i.e., that there is an infinite volume for purchasing the asset at price $0$. The combination of the prices and allowed limit order books constitutes the state space $\bX=\bM\times\bN$. 

Regarding the potential actions, the agent may post her intended volume for selling at each price, i.e., $a=(a^0,a^1,\dots,a^{N_S})$. As the agent aims to liquidate their shares, buying and short selling are not allowed. We set $\bA=\bZ_{\ge 0}^{N_S+1}$.  At $t=1,\dots,T-1$, suppose the remaining units of stock is $u$, the admissible domain of actions is $\cA_t(u)=\big\{a\in\bZ_{\ge 0}: \sum_{s=0}^{N_S} a^s\le u\big\}$, i.e., they can post limit orders to sell in total at most the amount of shares remaining. At $t=T$, the agent must liquidate all $u$ units of remaining stock, resulting in the singleton admissible domain of actions $\cA_T(u)=\set{(u,0,\dots,0)}$. As the state space is $\bX=\bM\times\bN$, we can extend $\cA_t$ to accept the pair $(m,u)$ as input but be constant in $m$.

Suppose the market generates the next limit order book, $m_{t+1}$, given the current state $(m_t,u_t)\in\bX$ and (realized) action $a_t\in\bA$ in a Markovian fashion prescribed by some probability on $\bM$, $B\mapsto\mathring P(t,(m_t,u_t),a_t,B)$ for $B\subseteq{\bM}$. With the risk induced by time-discretization and processing delay in mind, we additionally assume that the clearing occurs between $t$ and $t+1$ in the following way:
\begin{itemize}
\item Let $\overline m_t = \left\lceil\frac12(m_t+m_{t+1})\right\rceil$, i.e.,  $\overline m_t^s = \left\lceil\frac12(m^s_t+m^s_{t+1})\right\rceil$ for $s=0,\dots,N_S$.
\item The remaining units to be liquidated at $t+1$ is $u_{t+1} = u_t  -\Delta(a_t,\overline m_t)$, where
\begin{align*}
\Delta(a,\overline m)=\sum_{s=0}^{N_S} \1_{(-\infty,0)}(\overline m^{s})\left((-\overline m^{s}) \wedge \left(-\sum_{r=0}^{s} a^r_t - \sum_{r=s+1}^{N_s} (\overline m^{r})_{-}\right)_+\right).
\end{align*}
\item The cash gained from action $a_t$ is $g(a_t,\overline m_t)$, where
\begin{align*}
g(a,\overline m) =  \sum_{s=0}^{N_S}s \1_{(-\infty,0)}(\overline m^{s})\left((-\overline m^{s}) \wedge \left(-\sum_{r=0}^{s} a^r_t - \sum_{r=s+1}^{N_s} (\overline m^{r})_{-}\right)_+\right).
\end{align*}
\end{itemize}  
It is important to clarify that the clearing rules described above are stylized as, in the current model, there are no time stamps for limit order arrivals/cancellations. More detailed models can be formulated by making various assumptions on the timing of the orders. Following the set up above, we obtain  the transition kernel
\begin{align*}
P(t,(m,u),a,D) = \int_{\bM}\1_{D}\left(m,u-\Delta\left(a,\frac12(m+y)\right)\right) \mathring P(t,(m,u),a,\dif y),\quad D\subseteq \bV\times\bN.
\end{align*}
and stationary latent cost 
\begin{align*}
C((m,u),a,(m',u')) = -g\left(a,\frac12(m+m')\right) .
\end{align*}
At time $t=0,1,\dots,T-1$, suppose we are given a value function $v$ for $t+1$. Let $\lambda$ be a probability measure on $\bA$ representing the randomized action. The above leads to the following regular conditional distribution 
\begin{multline*}
\wt P^{v}_{t,(m,u),\lambda}(B)\\
= \int_{\bA}\int_{\bM}C\left((m,u),a,\left(m',u-\Delta\left(a,\frac12(m+m')\right)\right)\right) + v\left(m',u-\Delta\left(a,\frac12(m+m')\right)\right) \\
\mathring P(t,(m,u),a,\dif m') \lambda(\dif a).
\end{multline*}

For the choice of $\sigma_{t,x}$, we may choose, e.g., the weighted average of entropic risk measure $\sigma^E_{t,x}$ and Kusuoka-type risk measure $\sigma^K_{t,x}$. The weight may vary across time and state depending on the agents specific requirements. For example, a simple time varying choice could be 
\begin{align*}
\sigma_{t,(m,u)}(\mu) = \left(1-\frac{t}{T}\right)\sigma^E_{t,(m,u)}(\mu) + \frac{t}{T}\sigma^K_{t,(m,u)}(\mu),
\end{align*}
where $\sigma^E_{t,(m,u)}$ is defined in \eqref{eq:DefEntropic} with $\tau\equiv 1$, and  $\sigma^K_{t,(m,u)}$ is defined \eqref{eq:DefKusuoka} with $\beta_{t,x}\equiv0$ for any $(t,x)$ and
\begin{align*}
\cM_t(m,u) = \cM_t(u) = \left\{\frac12\delta_1 + \frac12\delta_{0.5\frac{u}{u_0}},\; \delta_{\frac{u}{u_0}}\right\}. 
\end{align*}

Regarding the usage of $\sigma^E_{t,(m,u)}$, we underscore one particular viewpoint: in real-world scenarios, other market participants may infer and take advantage of an agent's intention to liquidate. While modeling this dynamic is complex, an alternative approach involves using the entropic risk measure to promote randomization, which could ultimately help reduce information leakage. As for $\sigma^K_{t,(m,u)}$, it encourages risk aversion when a substantial number of units remain, and gradually transits toward profit-seeking as the liquidation process progresses. 

Finally, the optimal liquidation problem may be solved in a risk averse manner by using \cref{thm:FiniteDPP}. Let $O\equiv 0$. We start from $t=T$ with
\begin{align*}
J^*_{T,T}(m,u) = \sigma_{T,(m,u)}\left(\wt P^{O}_{T,(m,u),\delta_{(u,0,\dots,0)}}\right).
\end{align*}
For $t=1,\dots,T-1$, we find
\begin{align*}
J^*_{t,T}(m,u) = \inf_{\lambda\in\cP(A_t(u))} \sigma_{t,(m,u)}\left(\wt P^{J^*_{t+1,T}}_{t,(m,u),\lambda}\right) \quad\text{and}\quad \pi_t(x)\in\argmin_{\lambda\in\cP(A_t(u))} \sigma_{t,(m,u)}\left(\wt P^{J^*_{t+1,T}}_{t,(m,u),\lambda}\right),\quad x\in\bX.
\end{align*}

Regarding numerical methods for solving this example, it is noteworthy to consider the max-min structure indicated in \eqref{eq:DefAVaR} and \eqref{eq:DefKusuoka}. A promising initial approach could be the min-max $Q$-learning as proposed in \cite{Huang2017Risk}. Additionally, for the effective integration of randomized actions, the distributional method in \cite{Cheng2023Distributional} may offer further insights and efficiency.

\subsection{An infinite horizon example: auto-driving}\label{subsec:InfiniteExample}
In this example, we consider an autonomous driving scenario. For a comprehensive understanding on the role of risk aversion in autonomous robots, we direct the reader to \cite{Majumdar2019How} and \cite{Wang2022Risk}. We refer to \cite{Yurtsever2020Survey} for an overview of autonomous driving in real-world settings. In the subsequent discussion, we  assume a perpetual driving experiment within a 2D environment.

We start by constructing the state space, with the vehicle's surroundings as a crucial component. The surroundings are modeled by a positive measure on the rectangle $R = [-W, W] \times [-L, L]$, with $W, L \ge 0$ representing the chosen width and length of the surrounding box. Coordinate $(0,0)$ corresponds to the center of the agent's vehicle. We let $k \ge 0$ and define $\cM_k(R)$ as the set of positive measures on $(R, \cB(R))$ with $m(R) \le k$ for any $m \in \cM_k(R)$. We endow $\cM_k(R)$ with the weak topology. A positive measure assigned to an area indicates obstacles in that area, such as other vehicles, boulders, or animals. In principle,  the 2D shape of obstacles may be encoded into $m$ to increase the realism of the model.

In addition to the surroundings, we may include other factors describing the vehicle's operating conditions into the state space. For simplicity, however, we consider only the 2D velocity vector and the deviation from the center of the road, assuming that the road is straight and has a constant width $W_0$. We use $\bU\subset\bR^2$ for the domain of the vehicle's velocity and $\bD=[-W_0,W_0]$ for the domain of the vehicle's deviation from the center of the road.

To sum up, the state space is $\bX = \cM_k(R) \times \bU \times \bD$. We denote $x = (m, u, d) \in \bX$.

We set $\bA = \bR^2$ to reflect acceleration of the vehicle. The admissible action domain $\cA(u)$ is stationary, depends only on the vehicle's velocity, and must be modeled in a way that satisfies \cref{asmp:Main} (ii). For example, if $\cA(u)=\set{g_\alpha(u):\alpha\in\set{1,\dots,k}}$ and $u\mapsto g_\alpha(u)$ is continuous and uniformly bounded for all $\alpha$, then \cref{asmp:Main} (ii) is satisfied. Furthermore, $\cA(u)$ should respect road regulations. The detailed construction of $\cA(u)$ is omitted.

We define $P(x, a, \cdot)$ as a time-homogeneous probability measure on $(\bX, \cB(\bX))$ satisfying \cref{asmp:Main} (i) and governing the state transition. In view of the physical nature embedded in the problem, we impose the following constraint on the transition kernel 
$$P\left((m,u,d),a,\cM_b(R)\times\{u+a\Delta t\}\times\left\{d+u_1\Delta t+\frac12 a^2_1\Delta t^2\right\}\right)=1,$$ 
where $\Delta t$ is a parameter reflecting the resolution of time discretization. This constraint imposes a deterministic transition mechanism in the velocity and position of the vehicle, as the agent controls their acceleration and the physical laws of motion must be obeyed. This excludes the use of a strongly continuous transition kernel\footnote{In the current context, a strongly continuous transition kernel means a kernel is set-wise continuous. For example, we consider $Q:\bX\to\cP(\bR)$ strongly continuous if $x\mapsto [Q(x)](A)$ is continuous for any $A\in\cB(\bR)$.}, but is compatible with the weak continuity assumed in \cref{asmp:Main} (i). 

We next discuss the choice of cost and $\sigma_{t,x}$. In the perceptual driving experiment, it is appropriate to adopt time-homogeneous cost functions and $\sigma_{t,x}$. To account for time discretization and potential processing delays, we consider a latent cost function, such as 
\begin{align*}
C(x,a,x') = C(x') = \int_{R} g(r) m'(\dif r), \quad g(r) = \kcol\,\1_{(-W_v,W_v)\times(-L_v,L_v)}(r) + \exp(-\kcaution \,r),
\end{align*}
where $\kcol,\kcaution\ge 0$ are chosen parameters corresponding to a collision and caution, and $W_v,L_v\ge 0$ reflect the size of the agent's vehicle. Despite the indicator $\1_{(-W_v,W_v)\times(-L_v,L_v)}$,  the cost is lower semi-continuous as required in \cref{asmp:Main} (iv). Regarding $\sigma_x$, to encourage predictable driving behavior, in view of \cref{rmk:CommentM}, we consider the Kusuoka-type risk measure in \eqref{eq:DefKusuoka} with singleton $\cM_t$. More precisely, we let $\sigma_x$ take the form of a spectral risk measure 
\begin{align*}
\sigma_{m,u,d}(\mu) = \int_{[0,1]}\avar_{\kappa}(\mu)\; \eta_{m,u,d}(\dif \kappa).
\end{align*}
Following the discussion in \cref{subsec:ExampleKusuoka}, if $(m,u,d)\mapsto\eta_{m,u,d}$ is weakly continuous, then \cref{asmp:Main} (iii) is satisfied. We can choose $\eta_{m,u,d}$ in a principled way, for example, by assigning more weight to small $\kappa$ values when $|u|$ is large.

To formulate the infinite horizon DPP via \cref{thm:InfiniteDPP}, we select a discount factor $\gamma \in (0, 1)$. For $(m,u,d)\in\bX$, $\lambda\in\cP(\bA)$, and $v\in\ell^\infty(\bX,\cB(\bX))$ and 
\begin{align*}
\wt P^v_{(m,u,d),\lambda}(B) = \int_{\bA} \int_{\cM_k(R)}\1_{B}\left(\int_R g(r)m'(\dif r)+\gamma v(m',u',d')\right) P((m,u,d),a,\dif m')\lambda(\dif a),\quad B\in\cB(\bR).
\end{align*}
Here, $\wt P^v_{(m,u,d),\lambda}$ is time-homogeneous as the transition kernel and cost does not dependent on time. We solve the fixed point equation with unknown $v\in\ell^\infty(\bX,\cB(\bX))$:
\begin{align*}
v(m,u,d) = \inf_{\lambda\in\cP(\cA(m,u,d))}\int_{[0,1]}\overline\avar_{\kappa}\left(\wt P^v_{(m,u,d),\lambda}\right) \eta_{m,u,d}(\dif \kappa), \quad (m,u,d)\in\bX
\end{align*}
for the optimal value function $v^*$, and find the corresponding optimal policy
\begin{align*}
\pi(m,u,d) \in \argmin_{\lambda\in\cP(\cA(m,u,d))}\int_{[0,1]}\overline\avar_{\kappa}\left(\wt P^v_{(m,u,d),\lambda}\right) \eta_{m,u,d}(\dif \kappa), \quad (m,u,d)\in\bX.
\end{align*}
In view of \cref{rmk:Stationary}, the stationary setting above indeed allows certain stationary policies to attain the optimal.

Regarding numerical methods for solving this example, given that here the state space is continuous, approximation techniques tailored for risk-averse MDPs, e.g. \cite{Yu2018Approximate}, could be beneficial. We can also leverage the  elicitability of spectral risk measures and employ techniques developed in \cite{coache2023conditionally}. It is important to note that most existing methods are designed for discrete spaces or $\bR^d$, and adapting them to handle $\cP(\mathbb{R}^d)$ necessitates additional modification. A promising initial reference for this adaptation is \cite{Peyre2019book}.

\appendix

\small 

\section{Example illustrating the necessity of randomized actions}\label{app:Example}
We let
\begin{align*}
\rho(Z) = \max\left\{0.8\bE(Z)+0.2\cvar_{0.1}(Z),\,\cvar_{0.5}(Z)\right\},
\end{align*}
where $\cvar_\kappa(Z):=\inf_{q\in\bR}\set{q+\kappa^{-1}\bE(Z-q)_+}$ for $\kappa\in(0,1]$ and $(x)_+:=\max\set{x,0}$.
Let us consider controls 0 and 1 with the following random costs,
\begin{align*}
C^0 = \begin{cases}
0,&p=0.9,\\
5,&p=0.1,
\end{cases}
\quad\text{and}\quad C^1 = \begin{cases}
0,&p=0.5,\\
1.4,&p=0.5.
\end{cases}
\end{align*}
Let $\theta\in[0,1]$. We also consider randomly and independently choosing between control 0 and 1 with $1-\theta$ and $\theta$ probability, respectively. Accordingly, we define
\begin{align*}
C^\theta := I^\theta C^0 + (1-I^\theta) C^1,\quad\theta\in[0,1],
\end{align*}
where $I^\theta$ is a random variable independent of $C^0$ and $C^1$, and $\bP(I^\theta=1)=1-\theta$, $\bP(I^\theta=0)=\theta$. It follows that for $\theta\in[0,1]$, $C^\theta$ has the distribution below,
\begin{align*}
C^\theta = \begin{cases}
0,& p = 0.9 - 0.4\theta,\\
1.4,& p = 0.5\theta,\\
5,& p = 0.1-0.1\theta.
\end{cases}
\end{align*}
It can be shown that $\rho(C^0)=\rho(C^1)=1.4>\rho(C^{0.5})=1.2$.

\section{Technical Lemmas}\label{app:Lemmas}

\begin{lemma}\label{lem:avarReg}
Let $Z\in L^\infty(\Omega,\sH,\bP)$, $\avar_\kappa(Z):=\inf_{q\in\bR}\set{q+\kappa^{-1}\bE(Z-q)_+}$ for $\kappa\in(0,1]$ and\\ $\avar_0(Z):=\|Z\|_\infty$. Then, $\overline\avar_\kappa(Z)$ is non-increasing and continuous in $\kappa\in[0,1]$.
\end{lemma}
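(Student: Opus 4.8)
The plan is to work directly from the Rockafellar--Uryasev extremal representation $\overline\avar_\kappa(Z)=\inf_{q\in\bR}\{q+\kappa^{-1}\bE(Z-q)_+\}$ for $\kappa\in(0,1]$, and treat the endpoint $\kappa=0$ separately. For monotonicity on $(0,1]$: fix $q\in\bR$; since $\bE(Z-q)_+\ge 0$, the map $\kappa\mapsto q+\kappa^{-1}\bE(Z-q)_+$ is non-increasing on $(0,1]$, and taking the infimum over $q$ preserves this, so $\kappa\mapsto\overline\avar_\kappa(Z)$ is non-increasing on $(0,1]$. To extend monotonicity to $\kappa=0$, I would show $\lim_{\kappa\downarrow 0}\overline\avar_\kappa(Z)=\|Z\|_\infty=\overline\avar_0(Z)$ (which also handles continuity at $0$): for the lower bound, with $M:=\|Z\|_\infty$ and any $\ep>0$ we have $\bP(Z>M-\ep)=:p_\ep>0$, hence for any $q\le M-\ep$, $q+\kappa^{-1}\bE(Z-q)_+\ge q+\kappa^{-1}(M-\ep-q)p_\ep\to+\infty$ as $\kappa\downarrow0$ unless $q\ge M-\ep$ eventually; more carefully, choosing $q=M-\ep$ gives the upper bound $\overline\avar_\kappa(Z)\le M-\ep+\kappa^{-1}\bE(Z-M+\ep)_+\le M-\ep+\kappa^{-1}\cdot 2M\,\bP(Z>M-\ep)$, which is not what I want for the \emph{lower} bound, so instead I bound below: for every $q$, $q+\kappa^{-1}\bE(Z-q)_+\ge q+\kappa^{-1}(M-\ep-q)_+\,p_\ep$; the right side as a function of $q$ is minimized on $q\le M-\ep$ and tends to $+\infty$, and on $q\ge M-\ep$ equals $q\ge M-\ep$, so $\liminf_{\kappa\downarrow0}\overline\avar_\kappa(Z)\ge M-\ep$; letting $\ep\downarrow0$ gives $\liminf\ge M$. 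Combined with the trivial bound $\overline\avar_\kappa(Z)\le\overline\avar_{0+}$... actually the clean upper bound is $\overline\avar_\kappa(Z)\le \esssup Z=M$ since $q=M$ gives $M+\kappa^{-1}\bE(Z-M)_+=M$. Thus $\lim_{\kappa\downarrow0}\overline\avar_\kappa(Z)=M=\overline\avar_0(Z)$, which gives both left-continuity at $0$ and monotonicity on all of $[0,1]$.

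For continuity on $(0,1]$: a monotone (non-increasing) function on an interval has only jump discontinuities, so it suffices to rule out jumps, or alternatively to exhibit the map as a limit of a family for which continuity is transparent. The cleanest route is to use the explicit quantile formula $\overline\avar_\kappa(Z)=\kappa^{-1}\int_0^\kappa F^{-1}_{\cL(Z)}(1-s)\,ds$ (equivalently $\frac1\kappa\int_{1-\kappa}^1 F^{-1}_{\cL(Z)}(u)\,du$), valid for $\kappa\in(0,1]$; since $F^{-1}_{\cL(Z)}$ is bounded (as $Z\in L^\infty$) and integrable, the function $\kappa\mapsto\int_{1-\kappa}^1 F^{-1}_{\cL(Z)}(u)\,du$ is Lipschitz, hence $\kappa\mapsto\kappa^{-1}\int_{1-\kappa}^1 F^{-1}_{\cL(Z)}(u)\,du$ is continuous on $(0,1]$. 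If one prefers to avoid invoking the quantile identity, an alternative is to note that $\psi(\kappa,q):=q+\kappa^{-1}\bE(Z-q)_+$ is jointly continuous on $(0,1]\times\bR$, the infimum over $q$ may be restricted to the compact set $q\in[-\|Z\|_\infty,\|Z\|_\infty]$ (values outside are dominated), and then a standard argument (Berge's maximum theorem, or a direct $\ep$-$\delta$ estimate using uniform continuity of $\psi$ on compact subsets after bounding $\kappa$ away from $0$) yields continuity of $\kappa\mapsto\min_{q}\psi(\kappa,q)$ on $(0,1]$.

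I expect the main obstacle to be the endpoint behaviour at $\kappa=0$, i.e.\ verifying $\lim_{\kappa\downarrow0}\overline\avar_\kappa(Z)=\|Z\|_\infty$ cleanly and matching the definitions $\overline\avar_0$ used in \eqref{eq:DefAVAR0} (essential supremum) versus the $\|\cdot\|_\infty$ appearing in the lemma statement; these agree for $Z\in L^\infty(\Omega,\sH,\bP)$ since $\|Z\|_\infty=\inf\{r:\bP(|Z|>r)=0\}\ge\inf\{r:\bP(Z>r)=0\}=\esssup Z$ and in fact the relevant one-sided essential supremum is what the quantile integral converges to, so I would state the convention explicitly at the start of the proof. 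Everything on $(0,1]$ is routine once the extremal/quantile representation is in hand; the only care needed is the interchange of infimum and limit at the boundary, handled by the two-sided squeeze above.
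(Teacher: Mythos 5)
Your argument is correct, but it is worth noting that the paper does not actually prove this lemma at all: its ``proof'' is a one-line citation to \cite[Section 6.2.4, Remark 22]{Shapiro2021book}. You have therefore supplied a genuinely self-contained elementary proof where the paper defers to a reference. Your three ingredients all check out: (i) monotonicity on $(0,1]$ by taking the infimum of the pointwise non-increasing family $\kappa\mapsto q+\kappa^{-1}\bE(Z-q)_+$; (ii) the squeeze at the left endpoint, where the lower bound $q+\kappa^{-1}\bE(Z-q)_+\ge q(1-\kappa^{-1}p_\ep)+\kappa^{-1}(M-\ep)p_\ep$ is minimized at $q=M-\ep$ once $\kappa<p_\ep$, and the choice $q=M$ gives the matching upper bound $\avar_\kappa(Z)\le M$ for every $\kappa$, so the limit is $\esssup Z$ and monotonicity extends to $[0,1]$; (iii) continuity on $(0,1]$ via the Acerbi quantile identity $\avar_\kappa(Z)=\kappa^{-1}\int_{1-\kappa}^{1}F^{-1}(u)\,\dif u$ with bounded integrand (or, equivalently, Berge's theorem after restricting $q$ to $[-\|Z\|_\infty,\|Z\|_\infty]$). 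The one substantive point you raise --- that the limit as $\kappa\downarrow 0$ is $\esssup Z$ rather than $\|Z\|_\infty=\esssup|Z|$, so continuity at $0$ fails for the literal definition $\avar_0(Z):=\|Z\|_\infty$ whenever $\esssup Z<\esssup|Z|$ --- is a genuine imprecision in the lemma as stated; the distributional definition \eqref{eq:DefAVAR0} used elsewhere in the paper is the one-sided essential supremum, which is the correct convention and the one your proof establishes. I would suggest tightening the exposition of the lower-bound step (the passage where you first write down the wrong inequality and then correct course should be replaced by the final argument only), but mathematically nothing is missing.
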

\begin{proof}
See \cite[Section 6.2.4, Remark 22]{Shapiro2021book}.
\end{proof}

\begin{lemma}\label{lem:asconvMeasurable}
Let $(\Omega,\sA,\bP)$ be a complete probability space. Let $(Z_n)_{n\in\bN}$ be a sequence of real-valued $\sA$-$\cB(\bR)$ random variable converging to $Z$ $\bP$-almost surely. Then, $Z$ is $\sA$-measurable. 
\end{lemma}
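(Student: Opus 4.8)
\textbf{Plan of proof for Lemma \ref{lem:asconvMeasurable}.}
The statement asserts that on a complete probability space, an almost-sure pointwise limit of measurable functions is itself measurable. The plan is to separate the issue into two clean pieces: first handle the set where convergence actually occurs, and then absorb the exceptional null set using completeness. Concretely, I would let $N := \{\omega\in\Omega : (Z_n(\omega))_{n\in\bN}\text{ does not converge, or converges to something other than }Z(\omega)\}$, and note that by hypothesis $\bP(N)=0$. Since $(\Omega,\sA,\bP)$ is complete, every subset of a $\bP$-negligible set lies in $\sA$; in particular $N\in\sA$ and every subset of $N$ is in $\sA$.

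Next I would define the auxiliary random variable $\wt Z := Z\,\1_{N^c}$, equivalently $\wt Z(\omega) = \lim_{n\to\infty} Z_n(\omega)\1_{N^c}(\omega)$, where the limit exists everywhere on $N^c$ by construction. The point is that $\wt Z$ is an \emph{everywhere} pointwise limit of the $\sA$-$\cB(\bR)$ measurable functions $Z_n\1_{N^c}$, so $\wt Z$ is $\sA$-$\cB(\bR)$ measurable by the standard fact that pointwise limits preserve measurability (cf. \cite[Section 4.6, Lemma 4.29]{Aliprantis2006book}). One can also argue directly: for $a\in\bR$, $\{\wt Z < a\} = \bigcup_{k\in\bN}\bigcup_{m\in\bN}\bigcap_{n\ge m}\{Z_n\1_{N^c} < a - 1/k\}\in\sA$.

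Finally I would reconstruct $Z$ from $\wt Z$. For any Borel set $B\in\cB(\bR)$,
\begin{align*}
\{Z\in B\} = \bigl(\{Z\in B\}\cap N^c\bigr)\cup\bigl(\{Z\in B\}\cap N\bigr) = \bigl(\{\wt Z\in B\}\cap N^c\bigr)\cup\bigl(\{Z\in B\}\cap N\bigr),
\end{align*}
using that $Z=\wt Z$ on $N^c$. The first set on the right is in $\sA$ since $\wt Z$ is measurable and $N^c\in\sA$; the second set is a subset of $N$, hence in $\sA$ by completeness. Therefore $\{Z\in B\}\in\sA$ for all Borel $B$, i.e.\ $Z$ is $\sA$-$\cB(\bR)$ measurable, which completes the proof.

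There is no serious obstacle here; the only point that genuinely uses a hypothesis beyond elementary manipulation is the invocation of completeness to guarantee $N\in\sA$ and that arbitrary subsets of $N$ are measurable — without completeness the statement can fail, since $Z$ need only agree with a measurable function off a null set. The mild bookkeeping step is checking that $N$ itself is measurable, which follows because $N$ is (up to a $\bP$-null modification that completeness handles) the set where $\limsup_n Z_n$, $\liminf_n Z_n$, and $Z$ fail to coincide, all of which are measurable functions into $\overline\bR$.
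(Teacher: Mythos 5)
Your proof is correct and follows exactly the same route as the paper, which simply cites the two facts you elaborate: pointwise convergence preserves measurability (applied on the complement of the exceptional set) and completeness of $(\Omega,\sA,\bP)$ to absorb the null set where convergence fails. Your version just spells out the bookkeeping that the paper leaves implicit.
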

\begin{proof}
This is an immediate consequence of the fact that pointwise convergence preserves measurability \cite[Section 4.6, Lemma 4.29]{Aliprantis2006book} and that $\sA$ contains all $\bP$-negligible sets.
\end{proof}

\begin{lemma}\label{lem:IntfMeasurability}
Let $(\bX,\sX)$ and $(\bY,\sY)$ be measurable spaces. Consider nonnegative $f:(\bX\times\bY,\sX\otimes\sY)\to(\bR,\cB(\bR))$ and $M:(\bX,\sX)\to(\cP,\cE(\cP))$, where $\cP$ is the set of probability measure on $\sY$ and $\cE(\cP)$ is the corresponding evaluation $\sigma$-algebra. Then, $x\mapsto\int_\bY f(x,y)\,[M(x)](\dif y)$ is $\sX$-$\cB(\bR)$ measurable. 
\end{lemma}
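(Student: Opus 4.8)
The plan is a standard functional monotone class argument: verify the claim first for indicators of measurable rectangles, promote it to indicators of all sets in $\sX\otimes\sY$ via a Dynkin $\pi$--$\lambda$ argument, and then pass to general nonnegative $f$ by linearity and monotone approximation.

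First I would treat $f=\1_{A\times B}$ with $A\in\sX$ and $B\in\sY$. Here $\int_\bY\1_{A\times B}(x,y)\,[M(x)](\dif y)=\1_A(x)\,[M(x)](B)$. Since $M$ is $\sX$--$\cE(\cP)$ measurable and, by the very definition of the evaluation $\sigma$-algebra, the map $\zeta\mapsto\zeta(B)$ is $\cE(\cP)$--$\cB([0,1])$ measurable, the composition $x\mapsto[M(x)](B)$ is $\sX$--$\cB(\bR)$ measurable; multiplying by the $\sX$-measurable function $\1_A$ preserves measurability. Next I would let $\cD$ be the collection of $E\in\sX\otimes\sY$ such that $x\mapsto\int_\bY\1_E(x,y)\,[M(x)](\dif y)=[M(x)](\{y:(x,y)\in E\})$ is $\sX$--$\cB(\bR)$ measurable. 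Then $\bX\times\bY\in\cD$ because $[M(x)](\bY)=1$ for every $x$; $\cD$ is closed under proper differences because each $[M(x)]$ is a finite measure, so for $F\subseteq E$ one has $[M(x)](\{y:(x,y)\in E\setminus F\})=[M(x)](\{y:(x,y)\in E\})-[M(x)](\{y:(x,y)\in F\})$, a difference of $\sX$-measurable functions; and $\cD$ is closed under increasing unions by the monotone convergence theorem together with the fact that pointwise limits of measurable functions are measurable (cf. \cite[Section 4.6, Lemma 4.29]{Aliprantis2006book}). Hence $\cD$ is a $\lambda$-system containing the $\pi$-system $\{A\times B:A\in\sX,\,B\in\sY\}$, which generates $\sX\otimes\sY$, so Dynkin's theorem yields $\cD=\sX\otimes\sY$.

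Finally I would extend from indicators to general nonnegative $f$. By linearity of $\zeta\mapsto\int g\,\dif\zeta$, the conclusion holds for every nonnegative $\sX\otimes\sY$-measurable simple function. For general nonnegative measurable $f$, choose simple functions $0\le f_n\uparrow f$ pointwise; then by the monotone convergence theorem $\int_\bY f_n(x,y)\,[M(x)](\dif y)\uparrow\int_\bY f(x,y)\,[M(x)](\dif y)$ for every $x\in\bX$, and since each map $x\mapsto\int_\bY f_n(x,y)\,[M(x)](\dif y)$ is $\sX$-measurable, the pointwise limit is $\sX$-measurable as well. (When the integral is finite this gives the asserted $\cB(\bR)$-measurability; in general it is measurable into $[0,\infty]$, which is all that is needed in the applications, where $f$ is a bounded indicator.)

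I do not expect a genuine obstacle in this proof; the only point requiring a little care is verifying that $\cD$ is a $\lambda$-system, which hinges on $[M(x)]$ being a probability (hence finite) measure for every $x$ and on the defining property of the evaluation $\sigma$-algebra $\cE(\cP)$ used in the rectangle base case.
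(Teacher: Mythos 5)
Your proposal is correct and follows essentially the same route as the paper: indicators of rectangles via the evaluation $\sigma$-algebra, a monotone class / Dynkin argument to reach all of $\sX\otimes\sY$ (using finiteness of the probability measures for differences and monotone convergence for increasing unions), and then simple-function approximation for general nonnegative $f$. Your parenthetical remark about measurability into $[0,\infty]$ when the integral may be infinite is a small point the paper glosses over, but as you note it is immaterial in the applications.
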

\begin{proof}
We first consider $f(x,y)=\1_D(x,y)$, where $D\in\sX\otimes\sY$. Let $\sD$ consist of $D\in\sX\otimes\sY$ such that $x\mapsto\int_\bY f(x,y) [M(x)](\dif y)$ is $\sX$-$\cB(\bR)$ measurable. Note $\set{A\times B:A\in\sX,B\in\sY}\subseteq\sD$ because $\int_\bY \1_{A\times B}(x,y) [M(x)](\dif y) = \1_{A}(x) \, [M(x)](B)$ and 
\begin{align*}
\set{x\in\bX:[M(x)](B)\in C} = \set{x\in\bX:M(x)\in\set{\zeta\in\cP:\zeta(B)\in C}}\in\sX,\quad C\in\cB([0,1]).
\end{align*}
If $D^1,D^2\in\sD$ and $D^1\subseteq D^2$, then
\begin{align*}
\int_{\bY}\1_{D^2\setminus D^1}(x,y)\,[M(x)](\dif y) = \int_{\bY}\1_{D^2}(x,y)\,[M(x)](\dif y) - \int_{\bY}\1_{D^1}(x,y)\,[M(x)](\dif y)
\end{align*}
is also $\sX$-$\cB(\bR)$ measurable. Similarly, if $D^1,D^2\in\sD$ are disjoint, then $D^1\cup D^2\in\sD$. If $(D^n)_{n\in\bN}\subseteq\sD$ is increasing and $D^0=\bigcup_{n\in\bN}D^0$, then by monotone convergence, 
\begin{align*}
\lim_{n\to\infty}\int_{\bY}\1_{D^n}(x,y)\,[M(x)](\dif y) = \int_{\bY}\1_{D^0}(x,y)\,[M(x)](\dif y),\quad x\in\bX.
\end{align*}
It follows from \cite[Section 4.6, Lemma 4.29]{Aliprantis2006book} that $D^0\in\sD$. Invoking monotone class theorem (cf. \cite[Theorem 1.9.3 (ii)]{Bogachev2006book}), we yield $\sD=\sX\otimes\sY$.

Now let $f$ be any non-negative measurable function. Note that $f$ can be approximated by a sequence of simple function $(f^n)_{n\in\bN}$ such that $f^n\uparrow f$ (cf. \cite[Section 4.7, Theorem 4.36]{Aliprantis2006book}). Since $\sD=\sX\otimes\sY$, for $n\in\bN$, $x\mapsto\int_\bY f^n(x,y) [M(x)](\dif y)$ is $\sX$-$\cB(\bR)$ measurable. Finally, by monotone convergence and \cite[Section 4.6, Lemma 4.29]{Aliprantis2006book} again, we conclude the proof.
\end{proof}

In what follows, we let $\bY$ and $\bZ$ be two topological space.  
Below is a reduction of Portmanteau theorem. 
\begin{lemma}\label{lem:Portmanteau}
Suppose $\bY$ is a metric space endowed with Borel $\sigma$-algebra $\cB(\bY)$. Let $(\mu^n)_{n\in\bN}$ be a sequence of probability measures on $\cB(\bY)$, and $\mu$ a probability measure on $\cB(\bY)$. Then the following conditions are equivalent:
\begin{enumerate}
\item[(a)] $(\mu_n)_{n\in\bN}$ converges weakly to $\mu$;
\item[(b)] $\limsup_{n\to\infty}\mu^n(F)\le\mu(F)$ for any closed set $F\subset\bY$;
\item[(c)] $\liminf_{n\to\infty}\mu^n(U)\ge\mu(U)$ for any open set $U\subset\bY$;
\item[(d)] $\limsup_{n\to\infty}\int_\bY f(y)\mu^n(\dif y) \le \int_\bY f(y)\,\mu(\dif y)$ for any upper semicontinuous $f\in\ell^\infty(\bY,\cB(\bY))$;
\item[(e)] $\liminf_{n\to\infty}\int_\bY f(y)\mu^n(\dif y) \ge \int_\bY f(y)\,\mu(\dif y)$ for any lower semicontinuous $f\in\ell^\infty(\bY,\cB(\bY))$;
\item[(f)] $\lim_{n\to\infty}\mu_n(A)=\mu(A)$ for any $A$ with $\mu(\partial A)=0$, where $\partial A$ is the boundary of $A$.
\end{enumerate}
\end{lemma}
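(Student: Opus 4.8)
The plan is to prove all six equivalences by a single cycle of implications, isolating the two places where the metric (rather than merely topological) structure of $\bY$ is genuinely needed. The first metric input is that on a metric space a bounded lower semicontinuous $f$ is the increasing pointwise limit of the bounded $k$-Lipschitz functions $f_k(y):=\inf_{z\in\bY}\{f(z)+k\,d(y,z)\}$ (and dually, a bounded upper semicontinuous $f$ is a decreasing limit of Lipschitz functions via the sup-convolution). The second is the layer-cake identity $\int_\bY f\,\dif\nu=c+\int_{\bR}\nu(\{f>t\})\,\dif t$, valid after an affine normalization of $f$ for any probability measure $\nu$ and any bounded measurable $f$.

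First I would show $(a)\Rightarrow(e)$: for bounded lsc $f$ with $f\ge-M$, each $f_k$ lies in $C_b(\bY)$ and $f_k\le f$, so weak convergence gives $\liminf_n\int_\bY f\,\dif\mu^n\ge\lim_n\int_\bY f_k\,\dif\mu^n=\int_\bY f_k\,\dif\mu$, and letting $k\to\infty$ by monotone convergence yields $(e)$. Then $(e)\Rightarrow(d)$ is the substitution $f\mapsto-f$; $(d)\Rightarrow(b)$ and $(e)\Rightarrow(c)$ come from testing against $\1_F$ ($F$ closed, hence usc) and $\1_U$ ($U$ open, hence lsc); and $(b)\Leftrightarrow(c)$ is complementation, $\mu^n(F)=1-\mu^n(\bY\setminus F)$. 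Combining $(b)$ and $(c)$ gives $(f)$: for $\mu(\partial A)=0$ one has $A^{\circ}\subseteq A\subseteq\overline A$ with $\mu(A^{\circ})=\mu(\overline A)=\mu(A)$, and squeezing $\limsup_n\mu^n(A)\le\mu(\overline A)$ against $\mu(A^{\circ})\le\liminf_n\mu^n(A)$ forces $\mu^n(A)\to\mu(A)$. Finally $(f)\Rightarrow(a)$ closes the loop: for $f\in C_b(\bY)$, normalize to $0\le f\le1$; then $\partial\{f>t\}\subseteq\{f=t\}$, the sets $\{f=t\}$ for $t\in(0,1)$ are pairwise disjoint so $\mu(\{f=t\})=0$ for all but countably many $t$, and for such $t$ condition $(f)$ gives $\mu^n(\{f>t\})\to\mu(\{f>t\})$; dominated convergence in the layer-cake formula then gives $\int_\bY f\,\dif\mu^n\to\int_\bY f\,\dif\mu$, which is $(a)$.

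\textbf{Main obstacle.} The complementation and the $f\mapsto-f$ symmetry are routine; the content of the lemma is concentrated in $(a)\Rightarrow(e)$ and $(f)\Rightarrow(a)$. I expect the principal obstacle to be the Lipschitz-regularization step in $(a)\Rightarrow(e)$: one must verify that $f_k$ is finite, $k$-Lipschitz, bounded below by $\inf_\bY f$, and increases pointwise to $f$ precisely because $f$ is lower semicontinuous — this last point being where metrizability is actually used (on a general topological space $(a)\Rightarrow(b)$ can fail). Everything past that is soft measure theory: monotone and dominated convergence, disjointness of level sets, and the elementary layer-cake representation.
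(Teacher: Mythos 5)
Your proof is correct. Note, however, that the paper does not actually prove this lemma: its ``proof'' is a one-line citation to the Portmanteau theorem in Aliprantis and Border (Theorems 15.3 and 15.5), since this is a classical textbook result. Your self-contained argument is essentially the canonical proof of that theorem: the cycle $(a)\Rightarrow(e)\Rightarrow(d)\Rightarrow(b)\Leftrightarrow(c)\Rightarrow(f)\Rightarrow(a)$ is logically complete, the Lipschitz inf-convolution $f_k(y)=\inf_z\{f(z)+k\,d(y,z)\}$ correctly isolates where metrizability is used in $(a)\Rightarrow(e)$ (you rightly flag that one must check $f_k\uparrow f$ pointwise, which is exactly the lower semicontinuity of $f$), and the $(f)\Rightarrow(a)$ step via the layer-cake formula together with the observation that $\partial\{f>t\}\subseteq\{f=t\}$ and that only countably many level sets can carry positive $\mu$-mass is the standard closing argument. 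What your version buys over the paper's is self-containedness; what the citation buys is brevity for a result that is not where any of the paper's novelty lies.
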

\begin{proof}
See \cite[Section 15.1, Theorem 15.3 and Theorem 15.5]{Aliprantis2006book}.
\end{proof}

Let $\cB(\bY)$ be the Borel $\sigma$-algebra on $\bY$, and $\cP$ be the set of probability measures on $\cB(\bY)$. We endow $\cP$ with weak topology and let $\cB(\cP)$ be the corresponding Borel $\sigma$-algebra. Let $\cE(\cP)$ be the $\sigma$-algebra on $\cP$ generated by sets $\set{\zeta\in\cP:\zeta(A)\in B},\,A\in\cB(\bY),\,B\in\cB([0,1])$. Equivalently, $\cE(\cP)$ is the $\sigma$-algebra generated by sets $\set{\zeta\in\cP:\int_{\bY}f(y)\zeta(\dif y) \in B}\,$ for any real-valued bounded $\cB(\bY)$-$\cB(\bR)$ measurable $f$ and $B\in\cB(\bR)$. The lemma below states that $\cB(\cP)$ and $\cE(\cP)$ are equivalent.
\begin{lemma}\label{lem:sigmaAlgBE}
If $\bY$ is a separable metric space, then $\cB(\cP)=\cE(\cP)$.
\end{lemma}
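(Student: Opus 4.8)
\textbf{Proof proposal for \cref{lem:sigmaAlgBE}.}

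The plan is to prove the two inclusions $\cE(\cP)\subseteq\cB(\cP)$ and $\cB(\cP)\subseteq\cE(\cP)$ separately. For the first inclusion, I would show that each generator of $\cE(\cP)$ is $\cB(\cP)$-measurable; since $\bY$ is a metric space, it suffices to observe that for a fixed closed set $F\subseteq\bY$ the evaluation map $\zeta\mapsto\zeta(F)$ is upper semi-continuous on $\cP$ (this is exactly condition (b) of \cref{lem:Portmanteau}), hence $\cB(\cP)$-measurable. A monotone class / Dynkin argument then promotes measurability from closed sets to all of $\cB(\bY)$: the collection $\{A\in\cB(\bY): \zeta\mapsto\zeta(A)\text{ is }\cB(\cP)\text{-measurable}\}$ is a $\lambda$-system containing the $\pi$-system of closed sets, so it equals $\cB(\bY)$. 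This gives $\cE(\cP)\subseteq\cB(\cP)$.

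For the reverse inclusion $\cB(\cP)\subseteq\cE(\cP)$, the key point is that, since $\bY$ is separable and metrizable, the weak topology on $\cP$ is itself separable and metrizable (e.g.\ via the Prokhorov metric, or the Lévy--Prokhorov metric; this is the standard fact cited around \cite[Section 15.3]{Aliprantis2006book}). Therefore $\cB(\cP)$ is generated by any countable base, or equivalently by a countable family of subbasic open sets. By definition of the weak topology, subbasic open sets have the form $\{\zeta\in\cP:\int_\bY f\,d\zeta\in U\}$ with $f\in C_b(\bY)$ and $U\subseteq\bR$ open. Each such set lies in $\cE(\cP)$ because, by the equivalent description of $\cE(\cP)$ recalled just before the lemma, $\zeta\mapsto\int_\bY f\,d\zeta$ is $\cE(\cP)$-measurable for every bounded measurable $f$, in particular for $f\in C_b(\bY)$. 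Intersecting finitely many and taking countable unions keeps us inside the $\sigma$-algebra $\cE(\cP)$, and a countable family of such finite intersections generates the topology; hence $\cB(\cP)\subseteq\cE(\cP)$.

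I expect the main obstacle to be the careful justification that the weak topology on $\cP$ admits a \emph{countable} generating family of subbasic open sets, so that one does not need an uncountable union when passing from the topology to its Borel $\sigma$-algebra. This is precisely where separability and metrizability of $\cP$ enter: a second-countable space has a countable base, and each base element is a finite intersection of subbasic sets of the form above with $f$ ranging over a countable dense subset of $C_b(\bY)$ (dense in the topology of, say, uniform convergence on compacta, or one works directly with a countable determining family of functions). Once this countability is in hand, both inclusions are routine; the equivalence between the two descriptions of $\cE(\cP)$ (via indicator-type sets $\{\zeta:\zeta(A)\in B\}$ versus integral-type sets $\{\zeta:\int f\,d\zeta\in B\}$) is itself a monotone-class argument, already invoked in the footnote following the definition of $\cE(\Xi)$ in \cref{sec:Setup}, so I would simply cite that reasoning.
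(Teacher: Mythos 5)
Your proposal is correct, and it differs from the paper's proof in how it handles the two inclusions. For $\cE(\cP)\subseteq\cB(\cP)$ the paper simply cites \cite[Section 15.3, Theorem 15.13]{Aliprantis2006book}, which asserts directly that $\zeta\mapsto\int_\bY f\,d\zeta$ is Borel measurable for every bounded measurable $f$; you instead reprove this from first principles, using Portmanteau to get upper semi-continuity of $\zeta\mapsto\zeta(F)$ for closed $F$ (valid since $\cP$ is metrizable here, so sequential upper semi-continuity suffices) and then a $\pi$-$\lambda$ argument over the closed sets to cover all of $\cB(\bY)$. That is a sound, self-contained substitute for the citation. For $\cB(\cP)\subseteq\cE(\cP)$ both arguments rest on the fact that the weak topology is generated by the subbasic sets $\{\zeta:\int f\,d\zeta\in U\}$ with $f\in C_b(\bY)$, but the paper states the inclusion in one line, whereas you explicitly flag the real content of the step: one must pass from a (possibly uncountable) union of basic open sets to a countable one, which is where separability of $\bY$ (hence second countability/Lindel\"of of $\cP$, or equivalently a countable determining family in $C_b(\bY)$) enters. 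Identifying and justifying that countability point is exactly the right concern, and your treatment is, if anything, more careful than the paper's; the trade-off is that the paper's version is shorter by outsourcing both nontrivial steps to the textbook reference.
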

\begin{proof}
Notice that the weak topology of $\cP$ is generated by sets $\set{\zeta\in\cP:\int_{\bY}f(y)\zeta(\dif y)\in U}$ for any $f\in C_b(\bY)$ and open $U\subseteq\bR$. Therefore, $\cB(\cP)\subseteq\cE(\cP)$. On the other hand, by \cite[Section 15.3, Theorem 15.13]{Aliprantis2006book}, for any bounded real-valued  $\cB(\bY)$-$\cB(\bR)$ measurable $f$ and any $B\in\cB(\bR)$, we have $\set{\zeta\in\cP:\int_{\bY}f(y)\zeta(\dif y) \in B}\in\cB(\cP)$. The proof is complete.
\end{proof}

\begin{lemma}\label{lem:ConvVaryingMeas}
Let $\bY\times\bZ$ be a separable metric space endowed with product Borel $\sigma$-algebra $\cB(\bY)\otimes\cB(\bZ)$. Let $f\in\ell^\infty(\bY\times\bZ,\cB(\bY)\otimes\cB(\bZ))$ be lower semi-continuous. Let $(y^n)_{n\in\bN}\subset\bY$ converges to $y^0\in\bY$ and let $(\mu^n)_{n\in\bN}$ be a sequence of probability measure on $\cB(\bZ)$ converging weakly to $\mu^0$. Then,
\begin{align*}
\liminf_{n\to\infty}\int_\bZ f(y^n,z)\, \mu^n(\dif z) \ge \int_\bZ f(y^0,z)\, \mu^0(\dif z).
\end{align*}
\end{lemma}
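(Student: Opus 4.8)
\textbf{Proof plan for Lemma~\ref{lem:ConvVaryingMeas}.}
The strategy is to reduce the statement to the standard Portmanteau theorem (\cref{lem:Portmanteau}(e)) by absorbing the varying point $y^n$ into the integrand. First I would fix $\ep>0$ and truncate: since $f$ is bounded, replacing $f$ by $f-\inf f$ changes both sides by the same constant, so without loss of generality $f\ge 0$. Next, the main idea is to build a single lower semi-continuous function on $\bZ$ that lies below $f(y^n,\cdot)$ for all large $n$ simultaneously. For this I would define, for $\delta>0$,
\begin{align*}
g_\delta(z) := \inf\set{f(y,z'): d_\bY(y,y^0)\le\delta,\ d_\bZ(z',z)\le\delta}.
\end{align*}
Because $f$ is lower semi-continuous on the product space and $\bY\times\bZ$ is metric, $g_\delta$ is lower semi-continuous on $\bZ$ (an infimum over a parameter of a jointly lsc function, restricted to a closed ball, is lsc in the remaining variable — this is where the joint lower semi-continuity is essential), bounded, and $g_\delta(z)\uparrow f(y^0,z)$ as $\delta\downarrow 0$ for every $z$, again by lower semi-continuity of $f$ at $(y^0,z)$.

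With this construction in hand, the argument proceeds as follows. Fix $\delta>0$. Since $y^n\to y^0$, there is $N$ such that $d_\bY(y^n,y^0)\le\delta$ for $n\ge N$, and hence $f(y^n,z)\ge g_\delta(z)$ for all $z\in\bZ$ and all $n\ge N$. Therefore
\begin{align*}
\liminf_{n\to\infty}\int_\bZ f(y^n,z)\,\mu^n(\dif z) \ge \liminf_{n\to\infty}\int_\bZ g_\delta(z)\,\mu^n(\dif z) \ge \int_\bZ g_\delta(z)\,\mu^0(\dif z),
\end{align*}
where the last inequality is \cref{lem:Portmanteau}(e) applied to the bounded lsc function $g_\delta$ and the weakly convergent sequence $\mu^n\to\mu^0$. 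Finally, let $\delta\downarrow 0$: by the monotone convergence theorem (the $g_\delta$ increase to $f(y^0,\cdot)$ and are bounded below), $\int_\bZ g_\delta\,\dif\mu^0 \to \int_\bZ f(y^0,z)\,\mu^0(\dif z)$, which yields the claim.

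I expect the only delicate point to be verifying that $g_\delta$ is genuinely lower semi-continuous in $z$ and Borel measurable, so that \cref{lem:Portmanteau}(e) legitimately applies. The lower semi-continuity follows from the fact that $\set{(y,z): f(y,z)>c}$ is open for each $c$, so $\set{z: g_\delta(z)>c}$ can be written in terms of this open set and the closed balls; measurability is then automatic since lsc functions are Borel. Separability of $\bY\times\bZ$ is used implicitly to ensure the product $\sigma$-algebra behaves well and that weak convergence interacts correctly with the Portmanteau characterization. Everything else is routine bookkeeping with monotone limits and boundedness.
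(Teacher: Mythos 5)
Your reduction to \cref{lem:Portmanteau}(e) hinges on the claim that $g_\delta(z)=\inf\set{f(y,z'):d_\bY(y,y^0)\le\delta,\ d_\bZ(z',z)\le\delta}$ is lower semi-continuous (and Borel) on $\bZ$, and this is where the argument breaks: the infimum of a jointly lsc function over a closed ball that moves with $z$ is \emph{not} lsc in general, because closed balls in a separable metric space need not be compact, so you cannot extract a convergent subsequence from near-minimizers. Concretely, take $\bY$ a singleton, $\bZ=\ell^2$ with orthonormal basis $(e_k)$, $\delta=1$, and $f=1-\1_{E}$ with $E=\set{(1+1/k)e_k:k\in\bN}$; the set $E$ is closed and discrete, so $f$ is bounded and lsc. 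With $z_k=(1/k)e_k\to 0$ one has $\|(1+1/k)e_k-z_k\|=1$, hence $g_1(z_k)=0$ for every $k$, while no point of $E$ lies in the closed unit ball around $0$, so $g_1(0)=1$: thus $g_1$ fails to be lsc at $0$. Since weak convergence yields no liminf inequality for functions that are merely bounded and measurable (and measurability of an uncountable infimum is itself not automatic), the step $\liminf_n\int_\bZ g_\delta\,\dif\mu^n\ge\int_\bZ g_\delta\,\dif\mu^0$ is unjustified as written. Your heuristic that $\set{z:g_\delta(z)>c}$ "can be written in terms of" the open set $\set{f>c}$ and the closed balls does not produce an open set, precisely because of examples like the one above.

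The idea is salvageable if you replace the ball-infimum by the Lipschitz (Moreau--Yosida) regularization $f_m(y,z):=\inf_{(y',z')}\set{f(y',z')+m\,d((y,z),(y',z'))}$: each $f_m$ is bounded, $m$-Lipschitz, and increases pointwise to $f$ (using that $f$ is lsc and bounded below); moreover $f_m(y^n,\cdot)\to f_m(y^0,\cdot)$ uniformly on $\bZ$, so the Portmanteau inequality need only be applied to the fixed continuous bounded function $f_m(y^0,\cdot)$, and monotone convergence in $m$ finishes the proof. The paper avoids regularization altogether: it notes that $\delta_{y^n}\otimes\mu^n$ converges weakly to $\delta_{y^0}\otimes\mu^0$ on the product space (this is where separability enters, via Billingsley's theorem on weak convergence of product measures) and then applies \cref{lem:Portmanteau}(e) directly to $f$ on $\bY\times\bZ$. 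Either repaired route is fine; your construction as stated is not.
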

\begin{proof}
To start with, let us $\delta_y(B) := \1_{B}(y)$, i.e., $\delta_y$ is the dirac measure on $y$. It follows that $(\delta_{y^n})_{n\in\bN}$ converges weakly to $\delta_{y^0}$. Therefore, by \cite[Theorem 2.8 (ii)]{Billingsley1999book}, $(\delta_{y^n}\otimes\mu^n)_{n\to\bN}$ converges to $\delta_{y^0}\otimes\mu^0$. Due to Lemma \ref{lem:Portmanteau} (e), we yield
\begin{align*}
\liminf_{n\to\infty}\int_\bZ f(y^n,z)\, \mu^n(\dif z) &= \liminf_{n\to\infty}\int_{\bY\times\bZ} f(y,z)\, \delta_{y^n}\otimes\mu^n(\dif y\,\dif z)\\
&\ge \int_{\bY\times\bZ} f(y,z)\, \delta_{y^0}\otimes\mu^0(\dif y\,\dif z) = \int_\bZ f(y^0,z)\, \mu^0(\dif z),
\end{align*}
where we have used Fubini's theorem in the first and last equality.
\end{proof}

\begin{lemma}\label{lem:InffLSC}
Let $f$ be a real-valued lower semi-continuous function on $\bY\times\bZ$. Let $\phi:\bY\to 2^\bZ$ be nonempty compact valued and upper hemi-continuous.  Then, $\inf_{z\in\phi(y)}f(y,z)$ is attainable for all $y\in\bY$ and $\inf_{z\in\phi(y)}f(\cdot,z)$ is lower semi-continuous on $\bY$. 
\end{lemma}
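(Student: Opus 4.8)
The plan is to prove the two assertions of Lemma~\ref{lem:InffLSC} separately: first that the infimum $\inf_{z\in\phi(y)}f(y,z)$ is attained for each fixed $y\in\bY$, and second that the value function $g(y):=\inf_{z\in\phi(y)}f(y,z)$ is lower semi-continuous on $\bY$. For the attainability, fix $y\in\bY$. The set $\phi(y)$ is nonempty and compact by hypothesis, and the map $z\mapsto f(y,z)$ is lower semi-continuous on $\bZ$ (being a section of a jointly lower semi-continuous function). A lower semi-continuous real-valued function on a nonempty compact set attains its infimum, so there exists $z^\star\in\phi(y)$ with $f(y,z^\star)=g(y)$; in particular $g(y)\in\bR$.

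For the lower semi-continuity of $g$, I would argue by contradiction using sequences. (If $\bY$ is not first countable one would instead work with nets, but since $\bY\times\bZ$ is metrizable in the intended applications one may safely assume sequential arguments suffice; alternatively the argument below goes through verbatim with nets.) Suppose $g$ is not lower semi-continuous at some $y^0\in\bY$. Then there is a sequence $(y^n)_{n\in\bN}\subset\bY$ with $y^n\to y^0$ and $\liminf_{n\to\infty} g(y^n) < g(y^0)$. Passing to a subsequence (not relabelled), we may assume $g(y^n)\to L$ for some $L<g(y^0)$. By the attainability just established, for each $n$ pick $z^n\in\phi(y^n)$ with $f(y^n,z^n)=g(y^n)$.

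The key step is to extract a convergent subsequence of $(z^n)$ whose limit lies in $\phi(y^0)$. This is exactly where upper hemi-continuity of $\phi$ (together with compactness of the values) is used: an upper hemi-continuous compact-valued correspondence is \emph{closed} in the following sequential sense along a convergent sequence in the domain — there is a compact set containing $\{z^n\}$, and any subsequential limit $z^0$ of $(z^n)$ satisfies $z^0\in\phi(y^0)$. Concretely, by upper hemi-continuity at $y^0$, for every open $U\supseteq\phi(y^0)$ there is a neighbourhood of $y^0$ on which $\phi$ maps into $U$; applying this to a bounded/compact open neighbourhood of the compact set $\phi(y^0)$ shows $(z^n)$ eventually lies in a fixed compact set, so a convergent subsequence $z^{n_k}\to z^0$ exists, and a second application of upper hemi-continuity (shrinking $U$ to neighbourhoods of $\phi(y^0)$) forces $z^0\in\phi(y^0)$ since $\phi(y^0)$ is closed. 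Then, using that $f$ is lower semi-continuous at $(y^0,z^0)$ and $(y^{n_k},z^{n_k})\to(y^0,z^0)$,
\begin{align*}
g(y^0) \le f(y^0,z^0) \le \liminf_{k\to\infty} f(y^{n_k},z^{n_k}) = \liminf_{k\to\infty} g(y^{n_k}) = L < g(y^0),
\end{align*}
a contradiction. Hence $g$ is lower semi-continuous on $\bY$.

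I expect the main obstacle to be the correct handling of the "closedness along convergent sequences" property of an upper hemi-continuous compact-valued correspondence and the extraction of the convergent subsequence of $(z^n)$: one must invoke the standard fact (see, e.g., \cite[Section 17.2]{Aliprantis2006book}) that an upper hemi-continuous correspondence with compact values has closed graph when restricted to the relevant neighbourhoods, and that the union of values over a convergent sequence (together with the limit's value) is relatively compact. Once that topological point is in place, the rest is a routine lower semi-continuity chase.
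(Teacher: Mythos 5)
Your proof is correct in substance and establishes the same two facts by the same underlying mechanism (compactness of the values plus lower semi-continuity), but it differs in execution from the paper's argument: the paper disposes of the second assertion in one line by writing $\inf_{z\in\phi(y)}f(y,z)=-\sup_{z\in\phi(y)}(-f(y,z))$ and citing \cite[Section 17.5, Lemma 17.30]{Aliprantis2006book} for the upper semi-continuity of the value function of a supremum of an upper semi-continuous function over an upper hemi-continuous, compact-valued correspondence, whereas you essentially reprove that lemma from scratch via a sequential contradiction argument with extracted minimizers. Your version is self-contained and makes visible exactly where upper hemi-continuity enters, at the cost of length. One step is imprecise as written: you justify extracting a convergent subsequence of the minimizers $(z^n)$ by applying upper hemi-continuity to a ``bounded/compact open neighbourhood'' of $\phi(y^0)$, but in a general metric space $\bZ$ a compact set need not possess a compact (or relatively compact) neighbourhood, so the tail of $(z^n)$ being trapped in a fixed compact set does not follow this way. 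The correct justification is the sequential characterization of upper hemi-continuity for compact-valued correspondences between metric spaces (see \cite[Section 17.2]{Aliprantis2006book}): if $y^n\to y^0$ and $z^n\in\phi(y^n)$, then $(z^n)$ has a subsequence converging to a point of $\phi(y^0)$; this is proved by covering $\phi(y^0)$ with finitely many balls, none of which captures a subsequence of $(z^n)$, and contradicting $\phi(y^n)\subseteq U$ for $U$ the union of those balls. Since you explicitly flag this as the standard fact to invoke and point to the right section of Aliprantis--Border, I would call this a repairable imprecision rather than a gap; the rest of the lower semi-continuity chase and the attainability argument are exactly right.
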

\begin{proof}
It follows from the compactness of  $\phi(y)$ and the lower semi-continuity of $f$ implies that $\inf_{z\in\phi(y)}f(y,z)$ is attainable for $y\in\bY$. By (cf. \cite[Section 17.5, Lemma 17.30]{Aliprantis2006book}), $y\mapsto\sup_{z\in\phi(y)}(-f(y,z))$ is upper semi-continuous, and thus $y\mapsto\inf_{z\in\phi(y)}f(y,z)$ is lower semi-continuous.
\end{proof}

\begin{lemma}\label{lem:SupIntLSC}
Let $\bY$ be a seperable metric space, $\phi:\bY\to 2^\bM$ be lower hemi-continuous, $f\in\ell^\infty(\bY\times[0,1],\sY\otimes\cB([0,1]))$, and $\beta:\bY\times\cM\to\bR$ be upper semi-continuous. Suppose $f(\cdot,\kappa)$ is lower semi-continuous for $\kappa\in[0,1]$ and $f(y,\cdot)$ is continuous and non-increasing for $y\in\bY$.  Then, $$y\mapsto\sup_{\eta\in\phi(y)}\left\{\int_{[0,1]}f(y,\kappa)\eta(\dif\kappa)-\beta(y,\eta)\right\}$$ is lower semi-continuous on $\bY$.
\end{lemma}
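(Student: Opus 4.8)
# Proof Proposal for Lemma~\ref{lem:SupIntLSC}

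\textbf{Overall strategy.} The plan is to fix a sequence $(y^n)_{n\in\bN}\subset\bY$ converging to $y^0\in\bY$, and show that $\liminf_{n\to\infty} g(y^n)\ge g(y^0)$, where $g(y):=\sup_{\eta\in\phi(y)}\big\{\int_{[0,1]}f(y,\kappa)\eta(\dif\kappa)-\beta(y,\eta)\big\}$. It suffices to show that for every $\eta^0\in\phi(y^0)$ one has $\liminf_{n\to\infty} g(y^n)\ge \int_{[0,1]}f(y^0,\kappa)\eta^0(\dif\kappa)-\beta(y^0,\eta^0)$, since taking the supremum over $\eta^0\in\phi(y^0)$ then yields the claim. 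So fix such an $\eta^0$.

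\textbf{Step 1: Use lower hemi-continuity to build an approximating sequence of measures.} Since $\phi$ is lower hemi-continuous and $\eta^0\in\phi(y^0)$, there is a sequence $(\eta^n)_{n\in\bN}$ with $\eta^n\in\phi(y^n)$ and $\eta^n\to\eta^0$ weakly in $\cM=\cP([0,1])$; this is the standard characterization of lower hemi-continuity into a metrizable space (cf. \cite[Section 17.2, Theorem 17.21]{Aliprantis2006book}, noting $\cP([0,1])$ is separable and metrizable). By definition of the supremum, $g(y^n)\ge \int_{[0,1]}f(y^n,\kappa)\eta^n(\dif\kappa)-\beta(y^n,\eta^n)$. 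It therefore remains to control the two terms on the right as $n\to\infty$: show $\liminf_{n\to\infty}\int_{[0,1]}f(y^n,\kappa)\eta^n(\dif\kappa)\ge\int_{[0,1]}f(y^0,\kappa)\eta^0(\dif\kappa)$ and $\limsup_{n\to\infty}\beta(y^n,\eta^n)\le\beta(y^0,\eta^0)$. The second is immediate from the upper semi-continuity of $\beta$ on $\bY\times\cM$ together with $(y^n,\eta^n)\to(y^0,\eta^0)$.

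\textbf{Step 2: The integral term — the main obstacle.} This is the delicate part because both the integrand $f(y^n,\cdot)$ and the measure $\eta^n$ vary with $n$. The natural tool is a ``joint'' lower-semicontinuity argument in the spirit of \cref{lem:ConvVaryingMeas}, but \cref{lem:ConvVaryingMeas} requires $f$ itself to be jointly lower semi-continuous on $\bY\times[0,1]$, which is \emph{not} assumed here (we only have separate regularity in each variable plus monotonicity in $\kappa$). I would exploit the structure: since $f(y,\cdot)$ is continuous and non-increasing on $[0,1]$ for each $y$, and $f(\cdot,\kappa)$ is lower semi-continuous for each $\kappa$, I claim $f$ is in fact jointly lower semi-continuous. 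Indeed, by monotonicity one can write $f(y,\kappa)=\inf_{q\in\bQ\cap[0,\kappa)}f(y,q)$ plus a continuity correction; more cleanly, for any $\kappa^0$ and any $\ep>0$, continuity of $f(y^0,\cdot)$ gives $\kappa'<\kappa^0$ with $f(y^0,\kappa')\le f(y^0,\kappa^0)+\ep$, and monotonicity gives $f(y,\kappa)\ge f(y,\kappa')$ for all $\kappa$ in a right-neighborhood of $\kappa'$ containing $\kappa^0$; combining with lower semi-continuity of $f(\cdot,\kappa')$ at $y^0$ yields $\liminf_{(y,\kappa)\to(y^0,\kappa^0)}f(y,\kappa)\ge f(y^0,\kappa')\ge f(y^0,\kappa^0)+\ep - \ep$... one must be careful with the direction of the inequality at the endpoint $\kappa^0$, but since $f(y,\cdot)$ is \emph{non-increasing} the troublesome direction is $\kappa\downarrow\kappa^0$, handled by continuity of $f(y^0,\cdot)$ from the right. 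Once joint lower semi-continuity of $f$ on $\bY\times[0,1]$ is established, \cref{lem:ConvVaryingMeas} (applied with $\bZ=[0,1]$, $\mu^n=\eta^n$, $y^n\to y^0$) delivers $\liminf_{n\to\infty}\int_{[0,1]}f(y^n,\kappa)\eta^n(\dif\kappa)\ge\int_{[0,1]}f(y^0,\kappa)\eta^0(\dif\kappa)$ directly.

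\textbf{Step 3: Conclusion.} Combining Steps 1 and 2,
\begin{align*}
\liminf_{n\to\infty} g(y^n) \ge \liminf_{n\to\infty}\int_{[0,1]}f(y^n,\kappa)\eta^n(\dif\kappa) - \limsup_{n\to\infty}\beta(y^n,\eta^n) \ge \int_{[0,1]}f(y^0,\kappa)\eta^0(\dif\kappa) - \beta(y^0,\eta^0).
\end{align*}
Since $\eta^0\in\phi(y^0)$ was arbitrary, taking the supremum over $\eta^0$ gives $\liminf_{n\to\infty}g(y^n)\ge g(y^0)$, i.e. $g$ is lower semi-continuous on $\bY$. The one place demanding real care is Step~2 — converting the hypotheses ``lower semi-continuous in $y$ separately'' plus ``continuous and monotone in $\kappa$'' into genuine joint lower semi-continuity, so that the varying-measure convergence lemma applies; everything else is bookkeeping with suprema, infima, and the definitions of hemi-continuity.
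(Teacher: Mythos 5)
Your overall route is the same as the paper's: establish that the separate hypotheses (lower semi-continuity in $y$, continuity and monotonicity in $\kappa$) force $f$ to be \emph{jointly} lower semi-continuous on $\bY\times[0,1]$, feed this into \cref{lem:ConvVaryingMeas} to get joint lower semi-continuity of $(y,\eta)\mapsto\int_{[0,1]}f(y,\kappa)\,\eta(\dif\kappa)-\beta(y,\eta)$, and then pass the supremum over the lower hemi-continuous correspondence $\phi$. The only structural difference is at the last step: the paper simply cites \cite[Section 17.5, Lemma 17.29]{Aliprantis2006book}, whereas you reprove that fact by hand via the sequential characterization of lower hemi-continuity (which is fine, though if only the subsequence form of that characterization is available you should first pass to a subsequence of $(y^n)$ realizing $\liminf_n g(y^n)$).

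The step you yourself flag as delicate, however, is written with the inequality on the wrong side. You approximate $\kappa^0$ from \emph{below}, choosing $\kappa'<\kappa^0$, and then assert that monotonicity gives $f(y,\kappa)\ge f(y,\kappa')$ for $\kappa$ in a right-neighbourhood of $\kappa'$. Since $f(y,\cdot)$ is \emph{non-increasing}, for $\kappa>\kappa'$ one has $f(y,\kappa)\le f(y,\kappa')$ — an upper bound, which is useless for lower semi-continuity. The correct move (and the one the paper makes) is to approximate from \emph{above}: for $\kappa^0<1$ pick $\delta\in(\kappa^0,1]$; since $\kappa^n\to\kappa^0$, eventually $\kappa^n<\delta$, so non-increasingness gives $f(y^n,\kappa^n)\ge f(y^n,\delta)$, lower semi-continuity of $f(\cdot,\delta)$ gives $\liminf_n f(y^n,\delta)\ge f(y^0,\delta)$, and then letting $\delta\downarrow\kappa^0$ and using continuity of $f(y^0,\cdot)$ yields $\liminf_n f(y^n,\kappa^n)\ge f(y^0,\kappa^0)$; the case $\kappa^0=1$ uses $f(y^n,\kappa^n)\ge f(y^n,1)$ directly. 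Your closing remark about the ``troublesome direction'' suggests you sensed this, but as written the displayed argument (including the chain ending in ``$f(y^0,\kappa^0)+\ep-\ep$'') does not go through and needs to be replaced by the right-sided approximation above. With that correction, the proof is complete and matches the paper's.
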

\begin{proof}
We first show that $f$ is lower semi-continuous. To this end let $(y^n)\subseteq\bY$ and $(\kappa^n)\subseteq[0,1]$ converge to $y^0\in\bY$ and $\kappa^0$, respectively. If $\kappa^0=1$, then $\liminf_{n\to\infty} f(y^n,\kappa^n) \ge \liminf_{n\to\infty}f(y^n,1) \ge f(y^0,\delta)$. If $\kappa^0<1$, then for any $\delta\in(\kappa^0,1]$ we have $\liminf_{n\to\infty} f(y^n,\kappa^n) \ge \liminf_{n\to\infty}f(y^n,\delta) \ge f(y^0,\delta)$. Letting $\delta\downarrow\kappa^0$, by the continuity of $f(y,\cdot)$, we yield $\liminf_{n\to\infty} f(y^n,\kappa^n)\ge f(y^0,\kappa^0)$. 

In view of Lemma \ref{lem:ConvVaryingMeas}, $(y,\eta)\mapsto\int_{[0,1]}f(y,\kappa)\eta(\dif\kappa)$ is lower semi-continuous. It follows that $(y,\eta)\mapsto\int_{[0,1]}f(y,\kappa)\eta(\dif\kappa)-\beta(y,\eta)$ is also lower semi-continuous. Finally, in view of \cite[Section 17.5, Lemma 17.29]{Aliprantis2006book}, the proof is complete.
\end{proof}

\newpage

\begin{minipage}{0.92\linewidth}
\section{Glossary of notations}\label{app:notations}

\begin{center}
\resizebox{\columnwidth}{!}{
\begin{tabular}[ht]{c c} 
\hline
\bf{Notations} & \makecell{\bf{Definitions}}\\
\hline
$\bN, \bN_0$ & \makecell{\small $\set{1,2,\dots}$ and $\set{0,1,2,\dots}$. \small See \cref{sec:Setup}.}\\
$\ell^\infty(\bY,\sY)$ & \makecell{\small Set of bounded $\sY$-$\cB(\bR)$ measurable functions. \small See \cref{sec:Setup}.}\\
$\delta_y$ & \makecell{\small Dirac measure at $y$. \small See \cref{sec:Setup}.}\\
$(\Omega,\sH,\bP),\bF,\bG,\bU,\sF_t,\sG_t,\sU_t$ & \makecell{\small Probability space, filtrations and $\sigma$-algebras. \small See \cref{sec:Setup}.}\\
$\bX,\bA$ & \makecell{\small State and action spaces. See \cref{sec:Setup}. }\\ 
$\Xi,\Lambda$ & \makecell{\small Set of probability measures on $\cB(\bX)$ and $\cB(\bA)$. See \cref{sec:Setup}. }\\ 
$\cB(\Xi),\cB(\Lambda),\cE(\Xi),\cE(\Lambda)$ & \makecell{\small Borel $\sigma$-algebras and evaluation $\sigma$-algebras of $\Xi$ and $\Lambda$. See \cref{sec:Setup}. }\\ 
$\cA_t$ & \makecell{\small Admissible action domain. See \cref{sec:Setup}. }\\ 
$\varpi_t$ & \makecell{\small Set of probability measures with support contained by $\cA_t$. See \cref{sec:Setup}. }\\ 
$\Pi_t$ & \makecell{\small Set of $\pi_t:(\bX,\cB(\bX))\to(\bA,\cE(\bA))$ such that $\pi_t(x)\in\varpi_t(x)$. See \cref{sec:Setup}. }\\ 
$\fp,\Pi$ & \makecell{\small Abbreviation of Markovian policy $(\pi_t)_{t\in\bN}$ and the set of $\fp$. See \cref{sec:Setup}. }\\ 
$\cP(\bR^d),\cP_b(\bR^d)$ & \makecell{\small Set of probabilities on $\cB(\bR^d)$ and set of probabilities with bounded support.\\ See \cref{sec:Setup}. }\\ 
$P^{Y|\sG}$ & \makecell{\small Regular conditional distribution $Y$ given $\sG$. See \cref{def:RegCondDist}. }\\ 
$X_t,A_t,\fX,\fA$ & \makecell{\small State and action process, abbreviations of $(X_t)_{t\in\bN}$ and $(A_t)_{t\in\bN}$.\\ \small See \cref{subsec:controlledP}.}\\
$\sigma$ & \makecell{\small Convex risk measure defined at the level of distributions.}  See \cref{def:DCRM}.\\
$\overline\avar_\kappa$ & \makecell{\small Average valued at risk. See \eqref{eq:DefAVaR} and \eqref{eq:DefAVAR0}.}\\
$\sigma_{t,x}$ & \makecell{\small Time-state dependent distributional convex risk measure. See \cref{subsec:DDRM}.}\\
$\varsigma^\fX_{t,x,T}, \varsigma^\fX_{t,x,\infty}$ & \makecell{\small Distributional DRMs. See \eqref{eq:DefsigmatT} and \eqref{eq:DefsigmatInfty}.}\\
$\Psi$ & \makecell{\small Subset of $(\fX,\fA)$. \small See \cref{subsec:Problem}.}\\
$\mu, P$ & \makecell{\small Initial distribution of $X_1$ and transition kernel from $(X_t,A_t)$ to $X_{t+1}$.\\ \small See \cref{asmp:PlaceHolder}.}\\
$X^\fp_t,A^\fp_t,\fX^\fp,\fA^\fp$ & \makecell{\small Counterparts of $X_t,A_t,\fX,\fA$ subject to Markovian policy $\fp$.\\ \small See \cref{subsec:Problem}.}\\
$C_t, \fC$ & \makecell{\small Cost function and abbreviation of $(C_t)_{t\in\bN}$. See \cref{subsec:Problem}.}\\
$O$ & \makecell{\small Function on $\bX$ that is constant $0$.}\\
$H_0, G^\lambda_{t}, H^{\fp}_t, S_t$ & \makecell{\small Operators acting on $\ell^\infty(\bX,\cB(\bX)))$. See \eqref{eq:DefH0}, \eqref{eq:DefG}, \eqref{eq:DefH} and \eqref{eq:DefS}.}\\
$J^\fp_{t,T}, J^*_{t,T}$ & \makecell{\small Value functions of $\fp$ and optimal value functions in finite horizon.\\ See \eqref{eq:DefJtTp} and \eqref{eq:DefJstartT}.}\\
$J^{\fp}_{t,\infty}, J^{*}_{t,\infty}$ & \makecell{\small Value functions of $\fp$ and optimal value functions in infinite horizon.\\ See \eqref{eq:DeftInftyP} and \eqref{eq:DefJStartInfty}.}\\
\hline
\end{tabular}
}
\end{center}
\end{minipage}

\newpage

\bibliographystyle{siamplain}
\bibliography{refs}

\end{document}